\numberwithin{equation}{section}
\numberwithin{figure}{section}
\newtheorem{theorem}{Theorem}[section]
\newtheorem{lemma}[theorem]{Lemma}
\newtheorem{proposition}[theorem]{Proposition}
\newtheorem{corollary}[theorem]{Corollary}
\newtheorem{fact}[theorem]{Fact}
\newtheorem{remark}[theorem]{Remark}
\newtheorem{example}[theorem]{Example}
\theoremstyle{definition}
\newtheorem{definition}[theorem]{Definition}
\newcommand\toe[2]{\mathcal{T}_{#1}(#2)}
\newcommand\head[2]{\mathcal{H}_{#1}(#2)}
\newcommand{\C}{{\mathbb{C}}}
\newcommand{\Z}{{\mathbb{Z}}}
\newcommand{\Q}{{\mathbb{Q}}}
\newcommand{\F}{\mathbb{F}}
\renewcommand{\t}{\mathfrak{t}}
\newcommand{\into}{\hookrightarrow}
\providecommand{\abs}[1]{\lvert#1\rvert}
\DeclareMathOperator{\Lie}{Lie}
\DeclareMathOperator{\Gr}{Gr}
\DeclareMathOperator{\pt}{pt}
\DeclareMathOperator{\Sym}{Sym}
\newcommand{\hsm}{{\hspace{1mm}}}
\begin{document}

\title{A positive Monk formula in the $S^1$-equivariant cohomology of type $A$ Peterson varieties}

 \author{Megumi Harada}
 \address{Department of Mathematics and
 Statistics\\ McMaster University\\ 1280 Main Street West\\ Hamilton, Ontario L8S4K1\\ Canada}
 \email{Megumi.Harada@math.mcmaster.ca}
 \urladdr{\url{http://www.math.mcmaster.ca/Megumi.Harada/}}
 \thanks{MH is partially supported by an NSERC Discovery Grant,
 an NSERC University Faculty Award, and an Ontario Ministry of Research
 and Innovation Early Researcher Award.  JT is partially supported by 
 NSF grant DMS-0801554, a Sloan Research Fellowship, and an Old Gold Fellowship.}

 \author{Julianna Tymoczko}
 \address{Department of Mathematics, University of Iowa, 14 MacLean
 Hall, Iowa City, Iowa 52242-1419, U.S.A.}
 \email{tymoczko@uiowa.edu}
\urladdr{\url{http://www.math.uiowa.edu/~tymoczko/}}

\keywords{} 
\subjclass[2000]{Primary: 14N15; Secondary: 55N91}

\date{\today}


\begin{abstract}

\textbf{Peterson varieties} are a special class
of Hessenberg varieties that have been extensively 
 studied e.g. by Peterson, Kostant, and Rietsch,
  in connection with the quantum cohomology of the flag variety. 
In this manuscript, we develop 
  a \textit{generalized Schubert calculus}, and in particular a
  \textit{positive} Chevalley-Monk formula, for the ordinary and
  Borel-equivariant cohomology of the Peterson variety
  $Y$ in type $A_{n-1}$, with respect to a natural $S^1$-action
  arising from the standard action of the maximal torus on flag
  varieties. As far as we know, this is the first example of positive
  Schubert calculus beyond the realm of Kac-Moody flag varieties
  $G/P$. 
  
  Our main results are as follows. First,  we identify a computationally convenient basis of
  $H^*_{S^1}(Y)$, which we call the basis of \textbf{Peterson Schubert
    classes}.
 Second, we derive a \textbf{manifestly positive, integral}
  \textbf{Chevalley-Monk formula} for the product of a 
 cohomology-degree-$2$ Peterson Schubert
  class with an arbitrary Peterson Schubert class.
Both $H^*_{S^1}(Y)$ and $H^*(Y)$ are generated in degree $2$.
Finally, by using our Chevalley-Monk formula we give explicit
descriptions (via generators and relations) of both the
$S^1$-equivariant cohomology ring $H^*_{S^1}(Y)$ and the ordinary
cohomology ring $H^*(Y)$ of the type $A_{n-1}$ Peterson variety.
Our methods are both directly from and inspired by those of
  GKM (Goresky-Kottwitz-MacPherson) theory and classical Schubert
  calculus. 
We discuss several open questions and directions for
  future work.
\end{abstract}

\maketitle

\setcounter{tocdepth}{1}
\tableofcontents

\section{Introduction}\label{sec:intro}

The main results of this manuscript are 
\begin{enumerate} 
\item a construction of a
computationally convenient module basis of \textit{Peterson Schubert classes} for the $S^1$-equivariant
cohomology ring with $\C$ coefficients of the Peterson variety $Y$ of Lie
type $A_{n-1}$, obtained as the projections of a suitable subset of the
well-known equivariant Schubert classes in $H^*_T({\mathcal F}\ell
ags(\C^n))$, and 
\item a manifestly-positive and manifestly-integral \footnote{Our structure constants will 
a priori be elements in $\C[t]$, a polynomial ring in one variable
with $\C$ coefficients. In this setting, by ``positive and integral'' we will mean that the structure constants
are polynomials $\sum_i a_i t^i$ with non-negative and integral
coefficients $a_i \in \Z_{\geq 0}$.}
combinatorial Chevalley-Monk formula for
the computation of certain products in $H^*_{S^1}(Y)$, obtained from 
explicit positive formulas for the restrictions of Peterson Schubert
classes to the $S^1$-fixed points of $Y$. 
\end{enumerate}
Moreover, as a straightforward corollary of the above, we also obtain 
\begin{enumerate} 
\item[(3)] an explicit description, via generators and relations, of
  both the $S^1$-equivariant and ordinary cohomology rings of type $A$
  Peterson varieties. 
\end{enumerate} 
As far as we know, 
this is the first example of an explicit, complete, and combinatorial
Schubert calculus computation of equivariant or ordinary cohomology rings outside of the setting of partial flag varieties $G/P$.  
We use techniques
both directly from and motivated by GKM (Goresky-Kottwitz-MacPherson)
theory and the Schubert calculus of flag varieties. We view our
results as the first steps in the development of a generalized
equivariant Schubert calculus for Hessenberg varieties (of which
Peterson varieties are a special case) and, more generally, for
certain subspaces of GKM spaces.

We begin with some background and motivation. 
Hessenberg varieties arise in many areas of mathematics, including
geometric representation theory, numerical
analysis, mathematical physics, combinatorics, and algebraic geometry. 
Their geometry is complicated and subtle: for instance, 
many Hessenberg varieties are singular, and some are not even
pure-dimensional. However, there is a close relationship between
Hessenberg varieties and linear algebra, which allows for explicit
analysis of 
their geometry and their connections to other fields. For instance, in
the special case of Springer varieties
\cite{Spa76, Shi85, Fun03}, their associated cohomology rings carry natural
representations of the symmetric group such that the top-dimensional
cohomology is an irreducible representation. More generally, 
Hessenberg varieties have a paving by affines indexed by certain
Young tableaux; the tableaux determine the dimension of the affines
according to explicit and simple combinatorial conditions
\cite{Tym07b}. 

In this paper, we focus on the special case of \textbf{Peterson
  varieties}, 
the geometry and combinatorics of which are of 
particular interest and are the subject of active current research.
Indeed, Kostant showed that Peterson varieties have a dense subvariety
whose coordinate ring is isomorphic to the quantum cohomology of the
flag variety \cite{Kos96}. Rietsch additionally proved that the
quantum parameters can be realized as principal minors of certain
Toeplitz matrices \cite{Rie03}. Furthermore, these Toeplitz matrices
can be obtained using a particular Schubert
decomposition of the flag variety intersected with the Peterson variety; 
this Schubert decomposition gives a paving by
affines of the Peterson varieties \cite{Tym06, Tym07b}. 
Much is still
unknown about Peterson varieties.  For example, this paper provides the first
general computation (e.g. with generators and relations) of the
ordinary and equivariant cohomology rings of Peterson varieties.

The goals and methods of this manuscript lie within the realm of
\textbf{Schubert calculus} and \textbf{GKM theory}, both of which
focus on explicit, combinatorial computations in (equivariant and
ordinary) cohomology rings. We begin with a brief discussion of
the former. Classical Schubert calculus is the study of the cohomology
ring $H^*(\Gr(k,\C^n))$ of the Grassmannian of $k$-planes in $\C^n$; 
more specifically, it asks for the
structure constants of $H^*(\Gr(k, \C^n))$ with respect
to the \textit{Schubert classes}, which are classes 
corresponding to \textit{Schubert subvarieties} of $\Gr(k,\C^n)$
and also form an additive basis for the ring. These
Schubert classes are also combinatorially natural in the following
sense: in the Borel presentation of $H^*(\Gr(k,\C^n))$ 
as a quotient of a polynomial ring, it is possible to
represent these classes by \textit{Schur polynomials},
which are essential and ubiquitous in e.g. symmetric function
theory. In the setting of $\Gr(k, \C^n)$, it is known
that the structure constants mentioned above are both
\textbf{positive} and \textbf{integral}. 
Thus, a natural and fundamental goal in
classical Schubert calculus is to find and prove formulas for these
structure constants which are \textit{manifestly} positive and
integral (e.g. by counting arguments).

Modern work in Schubert calculus encompasses the study of more general
spaces, such as the generalized Kac-Moody flag varieties, 
as well as more general (ordinary or equivariant)
cohomology theories, such as Borel-equivariant cohomology with various
coefficient rings, ordinary and equivariant quantum cohomology, as
well as ordinary and equivariant $K$-theory and quantum $K$-theory,
among others. The main goal of modern Schubert calculus is still to
prove that the relevant structure constants are positive in a suitable
sense, and thence to 
obtain explicit, elegant, and/or computationally effective
combinatorial
formulas for these constants. 
Recently, efforts have been made to
extend the ideas of Schubert calculus to cover even more general
spaces (e.g. the work of Goldin-Tolman in the context of
equivariant symplectic geometry \cite{GolTol09}).
This manuscript 
is another step in this direction, in that we develop a complete
Schubert-calculus-type description of the equivariant and ordinary
cohomology rings of a space which is not a $G/P$. 
Although our work and that of Goldin and Tolman are clearly related,
they are
different in nature; for instance, they always assume their spaces
are manifolds, while Peterson varieties are in general singular. 
Nevertheless, both our methods and those of
Goldin-Tolman depend heavily on GKM theory, to which we now turn.

GKM theory was presented by Goresky-Kottwitz-MacPherson \cite{GKM}
based on previous work of e.g. Chang-Skjelbred \cite{ChaSkj74} and
others. 
The original theory builds combinatorial tools to compute the
$T$-equivariant cohomology ring of a $T$-space $X$ that satisfies certain
technical conditions. This influential 
theory and its many consequences have been extensively generalized and
used since \cite{BCS06, GolTol09, GorKotMcP04, GH04, GZ01, GZ02, HHH05,
  KnuTao03, LauNgo04}.  In particular, extensions of GKM theory apply
to many of the generalized equivariant cohomology theories mentioned above. One of
the powerful features of GKM theory is that it allows us to build
convenient $H^*_T(\pt)$-module generators for the equivariant
cohomology $H^*_T(X)$ of the $T$-space $X$. (In an
equivariant-symplectic-geometric context, the elements of such a basis
can be given equivariant-Morse-theoretic interpretations in terms of
the moment map for the Hamiltonian $T$-action.) 
In the case of $\Gr(k, \C^n)$ or  
$\mathcal{F}\ell ags(\C^n)$, the equivariant Schubert classes give
precisely such a basis, 
thus allowing for
effective use of GKM theory in both classical and modern Schubert calculus
\cite{KnuTao03}. 

Unfortunately, classical GKM theory does \textit{not} apply
to our main objects of study, the type $A$ Peterson varieties.
Informally, this is because `the torus is too small'. More precisely,
we have the following. The Peterson variety $Y$ is a subvariety of
$\mathcal{F}\ell ags(\C^n)$. 
It is well-known that the torus action of $n \times n$ 
invertible diagonal matrices on
$\mathcal{F}\ell ags(\C^n)$ satisfies the technical conditions
required in GKM theory. However, 
the torus action of diagonal matrices does
\textit{not} preserve the Peterson variety $Y \subseteq
\mathcal{F}\ell ags(\C^n)$.  A circle subgroup of the torus does preserve $Y$, 
but this $S^1$-action on $Y$ does {\em not} satisfy
the GKM conditions. Nevertheless, we can explicitly analyze this
$S^1$-action and its fixed points $Y^{S^1}$, and obtain our first main
result (Theorem~\ref{theorem:pvA-basis}), which builds a
\textbf{computationally effective $H^*_{S^1}(\pt)$-module basis} for
$H^*_{S^1}(Y)$.  This basis satisfies certain crucial properties  in
GKM theory (also satisfied by the equivariant
Schubert classes in $H^*_T(\mathcal{F}\ell ags(\C^n))$), namely: 
\begin{enumerate}
\item upper-triangularity (see Equations~\eqref{eq:Peterson-upper-triangular}
  and~\eqref{eq:Peterson-upper-triangular-nonzero}) and 
\item minimality (see Equation~\eqref{eq:Peterson-minimality}).
\end{enumerate}
For precise statements and proof, see Theorem~\ref{theorem:pvA-basis}
and Proposition~\ref{prop:minimality}. In our situation, there is a natural map
$H^*_T(\mathcal{F}\ell ags(\C^n) \to H^*_{S^1}(Y)$ induced by inclusions of tori 
and varieties. Our module basis
of Theorem~\ref{theorem:pvA-basis} additionally satisfies the property
that
\begin{enumerate}
\item[(3)] each element of the basis is obtained as the image of an
  equivariant Schubert class in $H^*_T(\mathcal{F}\ell ags(\C^n))$. 
\end{enumerate}
Motivated by (3), we call our basis elements \textbf{Peterson Schubert
  classes}. They are indexed by subsets $\mathcal{A} \subseteq
\{1,2,\ldots, n-1\}$, and for the purposes of this section only,
we denote by $p_{\mathcal{A}}$ the Peterson Schubert class
corresponding to $\mathcal{A}$. It turns out that the previous three conditions characterize
our module basis $\{p_{\mathcal{A}}\}$ uniquely, in a suitable sense (Proposition~\ref{prop:pvA-unique}).

We now describe our second main result, the Chevalley-Monk formula for
Peterson varieties.  As a preliminary step, we first prove in
Proposition~\ref{prop:pi-generate} that the subset of
cohomology-degree-$2$ classes $p_i := p_{{\{i\}}}$ for \(1 \leq i \leq
n-1\) form a set of ring generators of $H^*_{S^1}(Y)$. Given this set
of ring generators, our \textbf{$S^1$-equivariant Chevalley-Monk
  formula formula for Peterson varieties} (see
Theorem~\ref{theorem:Monk}, where we use slightly different notation)
is a set of \textit{explicit} formulas to compute the product of an
arbitrary ring-generator class $p_i$ with an arbitrary
module-generator class $p_{\mathcal{A}}$.  We have
\begin{equation}\label{eq:Monk-intro}
p_i \cdot p_{\mathcal{A}} = c^{\mathcal{A}}_{i, \mathcal{A}} \cdot 
p_{\mathcal{A}} + \sum_{{\mathcal{A}} \subsetneq {\mathcal{B}} \textup{ and } |{\mathcal{B}}|
  = |{\mathcal{A}}|+1} c^{\mathcal{B}}_{i,{\mathcal{A}}} \cdot p_{{\mathcal{B}}}
\end{equation}
for any $i$ and $\mathcal{A}$, where the structure constants $c^{\mathcal{A}}_{i,\mathcal{A}}, c^{\mathcal{B}}_{i, \mathcal{A}}$ can be explicitly computed as follows.

First, 
\begin{itemize} 
\item $c^{\mathcal{A}}_{i, \mathcal{A}} = 0$ if $i \not \in
  \mathcal{A}$, 
\item $c^{\mathcal{A}}_{i, \mathcal{A}} = (\head{{\mathcal{A}}}{i} -
  i+1)(i-\toe{{\mathcal{A}}}{i} +1) t$ if $i \in \mathcal{A}$, 
\end{itemize} 
where the variable $t$ is the cohomology degree $2$ generator of $H^*_{S^1}(\pt) \cong \C[t]$. 
Additionally, for a subset $\mathcal{B} \subseteq \{1,2,\ldots,n-1\}$ which is a disjoint union
\(\mathcal{B} =
  \mathcal{A} \cup \{k\},\) we have explicit formulas, for which we need some notation. 
Given any set
$\mathcal{C} \subseteq \{1,2,\ldots, n-1\}$ and any \(k \in
\mathcal{C},\) denote by $\toe{{\mathcal{C}}}{k}$ and
$\head{{\mathcal{C}}}{k}$ the unique integers such that
\(\toe{{\mathcal{C}}}{k} \leq k \leq \head{{\mathcal{C}}}{k},\) the
consecutive sequence
\(\{\toe{{\mathcal{C}}}{k}, \toe{{\mathcal{C}}}{k}+1, \ldots,
\head{{\mathcal{C}}}{k}-1, \head{{\mathcal{C}}}{k}\}\) is a subset 
of 
$\mathcal{C}$, and such that \(\toe{{\mathcal{C}}}{k} -1 \not \in \mathcal{C},
\head{{\mathcal{C}}}{k}+1 \not \in \mathcal{C}.\) Then we have 
\begin{itemize} 
\item  $c^{\mathcal{B}}_{i,{\mathcal{A}}} =
0$ if \(i \not \in \{\toe{\mathcal{B}}{k}, \toe{\mathcal{B}}{k}+1,
  \ldots, \head{\mathcal{B}}{k}-1, 
\head{\mathcal{B}}{k}\},\)
\item if \(k \leq i \leq \head{\mathcal{B}}{k},\) then 
\[
c^{\mathcal{B}}_{i,{\mathcal{A}}} = (\head{{\mathcal{B}}}{k}-i+1) \cdot \left( \begin{array}{c} \head{{\mathcal{B}}}{k} - \toe{{\mathcal{B}}}{k}+1 \\
    k-\toe{{\mathcal{B}}}{k} \end{array} \right)
\]
\item if \(\toe{{\mathcal{B}}}{k} \leq i \leq k-1,\) then 
\[
c^{\mathcal{B}}_{i,{\mathcal{A}}} 
 = (i-\toe{{\mathcal{B}}}{k}+1) \cdot \binom{\head{{\mathcal{B}}}{k}-\toe{{\mathcal{B}}}{k}+1}{k-\toe{{\mathcal{B}}}{k}+1}.
\]
\end{itemize} 
An immediate consequence of the formulas above is 
that the (non-zero) structure constants $c^{\mathcal{B}}_{i, \mathcal{A}}$ are
both \textit{positive} and \textit{integral} in the appropriate
sense.
Moreover, our formula evidently has many of the desirable
properties advertised above: it is explicit, easily computed, 
and both \textbf{manifestly positive} and \textbf{manifestly
  integral}.

Finally, since the cohomology degree $2$ Peterson Schubert classes together with the 
pure equivariant class $t \in \C[t] \cong H^*_{S^1}(t)$ generate the ring 
$H^*_{S^1}(Y)$, our Chevalley-Monk
formula completely determines the $H^*_{S^1}(\pt)$-algebra structure
of the $S^1$-equivariant cohomology $H^*_{S^1}(Y)$. In particular, we
may explicitly describe $H^*_{S^1}(Y)$ as a ring with generators
$\{p_{\mathcal{A}}\}$ and $t$ satisfying precisely the relations~\eqref{eq:Monk-intro}, which we do in Corollary~\ref{corollary:ring-presentation-eqvt}. 
Moreover, it can be seen that the forgetful map \(H^*_{S^1}(Y)
\to H^*(Y)\) takes the Peterson Schubert classes to a $\C$-basis of 
the ordinary cohomology $H^*(Y)$, and the cohomology degree $2$
classes generate $H^*(Y)$ as a ring. Thus, as a straightforward
consequence of our $S^1$-equivariant Chevalley-Monk formula, we obtain
both a Chevalley-Monk formula for the ordinary cohomology $H^*(Y)$ of the
Peterson variety (Corollary~\ref{corollary:ordinary-Monk}), as well as an explicit generators-and-relations
description of $H^*(Y)$ (Corollary~\ref{corollary:ring-presentation-ordinary}). 
We expect these results to lead to a rich array of
further work.

The above discussion suggests the wide variety
of mathematics related to, and touching upon, this work. Indeed, our intended
audience consists of researchers interested in any subset of: 
Schubert calculus, combinatorics, equivariant algebraic topology,
geometric representation theory, algebraic geometry, or 
symplectic geometry.  For this reason we have attempted
to keep exposition elementary and prerequisites to a minimum. In
particular, we consistently use notation and terminology from type
$A$. Similarly, we favor specificity to generality throughout. 
An exception to this rule 
is the appendix, where we prove a general lemma in
Borel-equivariant cohomology with field coefficients, included here 
in this form to be of maximum use for our future work. 

We close with a discussion of avenues for further inquiry and a
sampling of open questions. 
First, we intend to explore the relationship
between our explicit presentation of the ordinary cohomology ring $H^*(Y)$ of
type $A$ Peterson varieties with conjectural
presentations due to A.~Mbirika. Mbirika's presentation is 
expressed in terms of `partial symmetric
functions' and Young tableaux, and directly generalizes  the
classical Borel presentation of $H^*(\mathcal{F}\ell ags(\C^n))$.
We already have preliminary results
which will be useful in this direction, including a Giambelli formula
for the equivariant cohomology of 
Peterson varieties. 
Second, and as mentioned above, we view our results here as the first successful example
of `generalized Schubert calculus' which extends
beyond the realm of Kac-Moody flag varieties $G/P$. 
In this manuscript, we heavily exploit the natural $S^1$-action on $Y$,
obtained by restricting an $(S^1)^n$-action on a larger GKM
space $X$ (in this case $\mathcal{F}\ell ags(\C^n)$). 
We intend to explore 
the more general case in which a $T'$-space $Y$ arises as a
$T'$-invariant subspace of a $T$-space $X$ which is GKM, for a subtorus $T'$ of
$T$. We have preliminary results which suggest that, under suitable
hypotheses, there exist appropriate `upper-triangular' module bases
for $H^*_{T'}(Y)$ 
similar to those constructed in this manuscript.
Finally, we conclude with several open questions which we hope to
address in future work.
\begin{itemize}
\item The structure constants
$c^{\mathcal{B}}_{i,\mathcal{A}}$ appearing in~\eqref{eq:Monk-intro} are non-negative integers.
Are the 
$c^{\mathcal{B}}_{i,\mathcal{A}}$ are some kind of intersection
numbers for suitable geometric objects corresponding to the
$p_{\mathcal{A}}$? 
\item In this manuscript, we restrict to Peterson varieties of Lie
  type $A$ and to Borel-equivariant cohomology with $\C$
  coefficients. 
Can our results can be generalized to
\begin{itemize}
\item general Lie type, 
\item general regular nilpotent Hessenberg varieties, and/or 
\item other generalized equivariant cohomology theories
  (e.g. equivariant $K$-theory)? 
\end{itemize}
\item Brion and Carrell have announced a result of Peterson's which gives
a presentation of the $S^1$-equivariant cohomology of the Peterson variety
\cite{BriCar04} which is different from ours.  What is the relationship between 
our presentation and theirs? 
\item Are there Springer-type representations on $S^1$-equivariant
  cohomology for all or some Peterson varieties? 
\end{itemize}

\bigskip

\noindent{\bf Acknowledgements.}  The authors are grateful to the
NSF-supported Midwest Topology Network for a generous travel
grant for research collaboration, which made some of this work
possible. The authors also acknowledge
the University of Iowa NSF VIGRE program, which supported
the `Equivariant geometry and combinatorics of flag
varieties' workshop at the University of Iowa in June 2009, during
which some of our ideas were generated. Both authors also thank Darius Bayegan for 
many helpful conversations.

\bigskip

\noindent{\bf Notation, terminology, and conventions.} 

\medskip

$n$ is a fixed but arbitrary positive integer. 

 $G$ denotes the Lie group $GL(n,\C)$.

 $B$ denotes the Borel subgroup of $G$ consisting of upper-triangular
 matrices. 

 $T$ is the compact maximal torus of the compact form $U(n,\C)$ of
 $G$, consisting of unitary diagonal matrices.

$\{t_i - t_{i+1}: 1 \leq
i \leq n-1\}$ is the set of positive simple roots of $\Lie(G)$.

$w \in S_n$ is expressed in one-line notation. Hence
\[
w = (w(1), w(2), \ldots, w(n)) \in S_n
\]
is the permutation on $n$ letters sending $i$ to $w(i)$. If $e_1, e_2, \ldots, e_n$ are the standard basis
vectors of $\mathbb{C}^n$, then the permutation matrix $w$ is related 
to the permutation $w \in S_n$
by $w e_i = e_{w(i)}$ for all $i$.

$s_i$ denotes the simple transposition in $S_n$ that interchanges $i$ and $i+1$
 and acts as the identity on all other elements of $\{1,2,\ldots, n\}$.

$s_i \cdot (t_j - t_{j+1})$, the action of the $s_i$ on the positive simple
roots $t_j - t_{j+1}$, is given by the action of $s_i$ on the indices of
the variables $t_k$. 

$w < w'$ in the \textbf{Bruhat order} if for any (hence every)
reduced-word decomposition of $w'$, there exists a subword which
equals $w$.  

$\ell(w)$ is the length of $w \in S_n$ with respect to the Bruhat
order, namely the minimal number $k$ of simple transpositions needed
to write $w = s_{i_1} s_{i_2} \cdots s_{i_k}$. 

$w_0$ is the \textbf{unique maximal element} of $S_n$; it has the property that 
it is Bruhat-larger than every other element of the group. 

$\mathbf{b} = (b_1, b_2, \ldots, b_{\ell(w)})$ denotes a reduced-word
decomposition of a permutation $w$. Here $\mathbf{b}$ is the sequence
of the indices of the simple transpositions whose product is $w$, so
\(w = s_{b_1} s_{b_2} \cdots s_{b_{\ell(w)}}.\)

$[a_1, a_2]$ for integers $a_1, a_2$ with $a_1 \leq a_2$ denotes the
set of consecutive integers $\{a_1, a_1+1, \ldots, a_2\}$.

\textbf{Equivariant cohomology}, in this manuscript, means
\textbf{Borel-equivariant cohomlogy with $\C$ coefficients}. 

\textbf{Restriction} refers to the natural map on (equivariant or ordinary)
cohomology induced by an inclusion map of spaces \(X_1 \into X_2.\) 
In the setting when $X_1$ is the set of fixed points of $X_2$ under a
group action, some manuscripts refer to this restriction map as a
\textbf{localization}; we avoid this terminology to prevent confusion
with other (e.g. Atiyah-Bott-Berline-Vergne) localization theories.

$\sigma_w$ is the $T$-equivariant Schubert class in
$H^*_T(G/B)$ corresponding to $w \in S_n$. We will abuse
notation and denote also by $\sigma_w$ the image of $\sigma_w$ under
the inclusion $H^*_T(G/B) \into H^*_T((G/B)^T)$.

$\Sym(\t^*) \cong \C[t_1, t_2, \ldots, t_n]$ is identified with the
$T$-equivariant cohomology $H^*_T(\pt)$.

$\Sym(\Lie(S^1)^*) \cong \C[t]$ is identified with the $S^1$-equivariant cohomology
$H^*_{S^1}(\pt)$. 

$Y$ denotes the Peterson variety in $G/B \cong \mathcal{F}\ell
ags(\C^n)$ of type $A_{n-1}$. 

$\mathcal{H}_{\mathcal{A}}$ and $\mathcal{T}_{\mathcal{A}}$ denote integer functions
as given in Definitions~\ref{defn:head} and~\ref{defn:tail}.

\bigskip

\section{Peterson varieties, $S^1$-actions, and
$S^1$-fixed points}\label{sec:fixed-points}

In Sections~\ref{subsec:def-Hess} and~\ref{subsec:torus-action} below,
we very briefly introduce the main characters of this manuscript
-- both the spaces and the torus (or circle) actions on
them. We refer the reader to \cite{Tym06} for a more leisurely
account.  Then in
Section~\ref{subsec:torus-fixed-pts}, we give an explicit
combinatorial enumeration of the $S^1$-fixed points of the Peterson
variety which will prove useful in the later sections. 

\subsection{Flag varieties, Hessenberg varieties, and Peterson
  varieties}\label{subsec:def-Hess}

The {\bf flag variety} (or {\bf flag manifold}) is the complex
homogeneous space $G/B$, which can also be described as the space of 
nested sequences of subspaces in $\C^n$.  Let 
\[
\mathcal{F}\ell ags(\C^n) := 
\{ V_{\bullet} = (V_1 \subseteq V_2 \subseteq \cdots V_{n-1} \subseteq
\C^n) \hsm \mid \hsm \dim_{\C}(V_i) = i \}. 
\]
The group $G$ acts naturally on $\mathcal{F}\ell ags(\C^n)$ by left
multiplication, namely $g \cdot V_{\bullet} := ( g \cdot
V_i)_{i=1}^n$. The stabilizer of a fixed flag $V_{\bullet}$ is
isomorphic to $B$; this provides the identification of $G/B$ with
$\mathcal{F}\ell ags(\C^n)$. 

\textit{Hessenberg varieties} (in type $A$) are subvarieties of
$\mathcal{F}\ell ags(\C^n) \cong G/B$, specified by pairs consisting
of an $n \times n$ complex matrix $X$ and a Hessenberg function $h$,
i.e. a nondecreasing function $h: \{1,2,\ldots,n\} \rightarrow
\{1,2,\ldots,n\}$.  Given such an $X$ and $h$, the Hessenberg variety
${\mathcal H}ess(X,h)$ is defined as 
\begin{equation}\label{eq:def-Hess}
\mathcal{H}ess(X,h) := \{ V_{\bullet}  \in \mathcal{F}\ell ags(\C^n) \;
\vert \;  X V_i \subseteq
V_{h(i)} \text{ for all } i=1,\ldots,n\} \subseteq \mathcal{F}\ell
ags(\C^n). 
\end{equation}
We say $\mathcal{H}ess(X,h)$ is a \textit{regular nilpotent
Hessenberg variety} if $X$ is a principal nilpotent operator, i.e. 
$X$ has a single Jordan block and its eigenvalue is zero.
More concretely, if $E_{i,j}$ denotes the $n \times n$ matrix whose
entries are zero except for a $1$ in the $(i,j)^{th}$ place, then up
to change of basis we may take
\begin{equation} \label{eqn:jordanform}
X = E_{1,2} + E_{2,3} + \cdots + E_{n-1,n}.
\end{equation}
If $X$ is a principal nilpotent operator and the Hessenberg
  function is given by $h(i) = i+1$ for $1 \leq i \leq n-1$ and
  $h(n)=n$ then $\mathcal{H}ess(X,h)$ is called a \textbf{Peterson
    variety} of Lie type $A_{n-1}$; we denote it by $Y$. 

For example, if $n=2$ then the Peterson variety is the full flag
variety.  If $n=3$ then the Peterson variety consists of the following flags:
\[\left\{ 
  V_1 = \left\langle \left( \begin{array}{c} 1 \\ 0 \\ 0 \end{array} \right) \right\rangle, 
  V_2 = \left\langle  \left( \begin{array}{c} 0 \\ 1 \\ 0 \end{array}
    \right) ,  \left( \begin{array}{c} 1 \\ 0 \\ 0 \end{array} \right)
  \right\rangle,
  V_3 = \C^3 \right\},\]
  
\[  \left\{ 
  V_1 = \left\langle \left( \begin{array}{c} a \\ 1 \\ 0 \end{array} \right) \right\rangle, 
  V_2 = \left\langle \left( \begin{array}{c} 1 \\ 0 \\ 0 \end{array}
    \right), \left( \begin{array}{c} a \\ 1 \\ 0 \end{array} \right) 
   \right\rangle,
  V_3 = \C^3 : \textup{ for all } a \in \C \right\},\]
  
\[  \left\{ 
  V_1 = \left\langle \left( \begin{array}{c} 1 \\ 0 \\ 0 \end{array} \right) \right\rangle, 
  V_2 = \left\langle \left( \begin{array}{c} 0 \\ b \\ 1 \end{array}
    \right), 
  \left( \begin{array}{c} 1 \\ 0 \\ 0 \end{array} \right) 
 \right\rangle,
  V_3 = \C^3  : \textup{ for all } b \in \C \right\}, \textup{ and }\]
  
 \[ \left\{ 
  V_1 = \left\langle \left( \begin{array}{c} c \\ d \\ 1 \end{array} \right) \right\rangle, 
  V_2 = \left\langle \left( \begin{array}{c} d \\ 1 \\ 0 \end{array}
    \right), 
   \left( \begin{array}{c} c \\ d \\ 1 \end{array} \right) 
 \right\rangle,
  V_3 = \C^3  : \textup{ for all } c,d \in \C \right\}.\]

\subsection{Torus actions on flag varieties and circle actions on Peterson varieties}\label{subsec:torus-action}

The flag variety $G/B \cong \mathcal{F}\ell ags(\C^n)$ is equipped
with a natural $T \cong (S^1)^n$-action coming from usual left multiplication of
cosets. This $T$-action has many useful properties: for
instance, there are finitely many
$T$-fixed points $wB \in G/B$, corresponding precisely to the permutation matrices $w \in S_n$.

However, this $T$-action does {\em not} restrict to the
Hessenberg varieties in $G/B$, in the sense that an arbitrary
Hessenberg variety is typically not preserved by the full $T$-action.
However, not all is lost: a natural $S^1$ subgroup of the maximal torus $T$
does preserve any Hessenberg variety $\mathcal{H}ess(X,h)$ whose matrix
$X$ is nilpotent and in Jordan canonical form. Consider the
$1$-dimensional subtorus 
\begin{equation}\label{eq:def-circle}
  \left\{ \left. \begin{bmatrix} t^n & 0 & \cdots & 0 \\ 0 & t^{n-1} &  &
      0 \\ 0 & 0 & \ddots & 0 \\ 0 & 0 &  & t \end{bmatrix}
  \; \right\rvert \;  t \in \C, \; \|t\| = 1 \right\}  \subseteq T^n
\subseteq U(n,\C) 
\end{equation}
of the maximal torus $T$, which we henceforth denote $S^1$.

The following are straightforward consequences of~\eqref{eq:def-Hess}
and~\eqref{eq:def-circle}; we leave proofs to the reader.

\begin{fact} 
The $S^1$
in~\eqref{eq:def-circle} preserves any Hessenberg variety
$\mathcal{H}ess(X,h)$ with $X$ nilpotent and in Jordan form. 
\end{fact}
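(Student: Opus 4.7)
The plan is to reduce the $S^1$-invariance of $\mathcal{H}ess(X,h)$ to a single matrix conjugation computation. Let $s \in S^1$ denote the diagonal matrix in~\eqref{eq:def-circle} with parameter $t$. Since the $S^1$-action on $\mathcal{F}\ell ags(\C^n)$ is by $s \cdot V_\bullet = (sV_i)_{i=1}^n$, the condition that $s \cdot V_\bullet \in \mathcal{H}ess(X,h)$ is equivalent to $X(sV_i) \subseteq sV_{h(i)}$ for all $i$, and multiplying on the left by $s^{-1}$ this becomes $s^{-1}Xs\, V_i \subseteq V_{h(i)}$. So the whole statement will follow if I can show that $s^{-1}Xs$ is a scalar multiple of $X$ (since subspaces are stable under scalar multiplication, that multiple can be nonzero or zero).

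Next I would carry out the matrix computation. For any matrix $X$ with entries $X_{ij}$, conjugation by $s = \mathrm{diag}(t^n,t^{n-1},\ldots,t)$ yields
\[
(s^{-1}Xs)_{ij} \;=\; t^{-(n-i+1)}\,X_{ij}\,t^{n-j+1} \;=\; t^{\,i-j}\,X_{ij}.
\]
If $X$ is nilpotent and in Jordan canonical form, then $X_{ij}$ is nonzero only when $j=i+1$ (each Jordan block contributes entries on the superdiagonal, with zeros between blocks). For such $(i,j)$ the exponent $i-j$ equals $-1$, so
\[
s^{-1}Xs \;=\; t^{-1}X.
\]
This is the key identity; it is also essentially the only place the Jordan-form hypothesis on $X$ is used.

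With this identity in hand, the conclusion is immediate. Given any $V_\bullet \in \mathcal{H}ess(X,h)$, we have $XV_i \subseteq V_{h(i)}$, hence
\[
s^{-1}Xs\,V_i \;=\; t^{-1}XV_i \;\subseteq\; V_{h(i)},
\]
which by the equivalence established above means $s \cdot V_\bullet \in \mathcal{H}ess(X,h)$. Since this holds for every $s \in S^1$, the circle preserves $\mathcal{H}ess(X,h)$, as claimed. I do not anticipate a serious obstacle: the only substantive ingredient is the observation that conjugation by the weighted diagonal $s$ scales superdiagonal entries uniformly by $t^{-1}$, which is exactly engineered by the consecutive integer exponents $n,n-1,\ldots,1$ appearing in~\eqref{eq:def-circle}.
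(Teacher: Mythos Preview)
Your proof is correct and is precisely the straightforward verification the paper has in mind; indeed, the paper leaves this fact to the reader, and your conjugation computation $s^{-1}Xs = t^{-1}X$ is exactly the intended one-line check.
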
 

\begin{fact} 
The $S^1$-fixed points of $G/B$ are precisely the $T$-fixed
  points, i.e. $(G/B)^T  = (G/B)^{S^1}$. 
\end{fact}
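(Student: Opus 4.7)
The plan is to prove the two inclusions separately. The inclusion $(G/B)^T \subseteq (G/B)^{S^1}$ is immediate from the fact that $S^1$ sits inside $T$ as in~\eqref{eq:def-circle}, so any flag fixed by all of $T$ is a fortiori fixed by the subgroup $S^1$. The content lies in the reverse inclusion $(G/B)^{S^1} \subseteq (G/B)^T$, and the key observation making this work is that the characters through which $S^1$ acts on the standard basis vectors $e_1, \ldots, e_n$ of $\C^n$ are $t^n, t^{n-1}, \ldots, t$, which are pairwise distinct.

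For the reverse inclusion, the natural approach is to translate $S^1$-fixedness of a flag into a purely linear-algebraic statement. Concretely, I would argue as follows. Because $S^1$ acts on $\C^n$ with $n$ pairwise distinct weights, each weight space is precisely one of the coordinate lines $\C e_i$, and hence every $S^1$-invariant subspace of $\C^n$ is a direct sum of some collection of these coordinate lines. Now suppose a flag $V_{\bullet} = (V_1 \subseteq V_2 \subseteq \cdots \subseteq V_{n-1} \subseteq \C^n)$ is fixed by $S^1$. Then each $V_i$ is an $S^1$-invariant subspace of dimension $i$, so there exists a subset $S_i \subseteq \{1, 2, \ldots, n\}$ of size $i$ with $V_i = \bigoplus_{k \in S_i} \C e_k$. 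The nesting $V_i \subseteq V_{i+1}$ forces $S_i \subseteq S_{i+1}$, producing a complete flag of subsets $S_1 \subsetneq S_2 \subsetneq \cdots \subsetneq S_n = \{1,\ldots,n\}$; equivalently, an ordering of $\{1,\ldots, n\}$, i.e.\ a permutation $w \in S_n$. Thus $V_{\bullet}$ is the coordinate flag $wB$ associated to $w$, which is manifestly $T$-fixed.

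This yields both inclusions and completes the proof. The main (and only) subtle step is verifying that $S^1$ acts on $\C^n$ with distinct weights, which is transparent from~\eqref{eq:def-circle}; everything else is bookkeeping. An alternative approach would go through tangent-weight calculations at the known $T$-fixed points $wB$: restricting the $T$-weight $t_i - t_j$ to $S^1$ yields $(j - i) t$, which is nonzero whenever $i \neq j$, so each $T$-fixed point is isolated in $(G/B)^{S^1}$ and a Borel-fixed-point/connected-components argument concludes. However, the direct linear-algebraic argument above seems cleaner and matches the ``straightforward consequence'' tone in which the fact is stated.
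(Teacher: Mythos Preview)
Your proof is correct. The paper does not actually supply a proof of this fact; it simply states that it is a ``straightforward consequence'' of the definitions~\eqref{eq:def-Hess} and~\eqref{eq:def-circle} and leaves the verification to the reader. Your argument via the pairwise distinctness of the $S^1$-weights on $\C^n$ is exactly the expected elementary verification, and your alternative sketch via tangent weights is also valid.
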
 

\begin{fact}
The $S^1$-fixed points in $\mathcal{H}ess(X,h)$ are the $T$-fixed
    points of $G/B$ that lie in $\mathcal{H}ess(X,h)$, namely
\begin{equation}\label{eq:Peterson-fixed}
\mathcal{H}ess(X,h)^{S^1} = (G/B)^T \cap \mathcal{H}ess(X,h).
\end{equation}
\end{fact}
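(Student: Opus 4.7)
The plan is to deduce this fact by combining the two preceding facts. The heart of the matter is already contained in Fact 2.2, which tells us that $(G/B)^{S^1} = (G/B)^T$; once this is in hand, the statement becomes a purely formal observation about how fixed-point sets behave under inclusion of an invariant subvariety.

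First I would establish the containment $\mathcal{H}ess(X,h)^{S^1} \subseteq (G/B)^T \cap \mathcal{H}ess(X,h)$. The preceding Fact shows that $\mathcal{H}ess(X,h)$ is preserved by the $S^1$-action, so the $S^1$-action on $\mathcal{H}ess(X,h)$ is simply the restriction of the $S^1$-action on $G/B$. Thus any $V_\bullet \in \mathcal{H}ess(X,h)^{S^1}$ is, in particular, an $S^1$-fixed point of $G/B$, so it lies in $(G/B)^{S^1}$. By Fact 2.2, this equals $(G/B)^T$, which together with $V_\bullet \in \mathcal{H}ess(X,h)$ proves the containment.

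For the reverse containment, suppose $V_\bullet \in (G/B)^T \cap \mathcal{H}ess(X,h)$. Since $S^1 \subseteq T$, any point fixed by all of $T$ is fixed by $S^1$, and since $V_\bullet \in \mathcal{H}ess(X,h)$ already, it lies in $\mathcal{H}ess(X,h)^{S^1}$.

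There is no substantive obstacle here: the work has been done in identifying the correct circle subgroup~\eqref{eq:def-circle} and establishing Fact 2.2. (The underlying reason Fact 2.2 holds is that the weights of $T$ on the tangent spaces $T_{wB}(G/B)$ are of the form $t_i - t_j$ with $i \neq j$, and these remain nonzero when restricted to the one-parameter subgroup with exponents $n, n-1, \ldots, 1$ — pairwise distinct integers — so no new fixed points are introduced by passing from $T$ to $S^1$.) Given these inputs, the proof amounts to an inclusion-of-fixed-points argument of two or three lines.
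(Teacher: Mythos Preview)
Your proposal is correct and is precisely the argument the paper has in mind: the authors state this fact together with the two preceding ones as ``straightforward consequences'' of the definitions and explicitly leave the proofs to the reader, so there is no independent proof in the paper to compare against. Your two-inclusion argument using Fact~2.2 and the containment $S^1 \subseteq T$ is exactly the intended verification.
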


\subsection{Combinatorial enumeration of  $S^1$-fixed points in the Peterson variety}\label{subsec:torus-fixed-pts}

It is straightforward from the definitions to check that the
$S^1$-fixed points~\eqref{eq:Peterson-fixed} for regular nilpotent Hessenberg varieties in type
$A_{n-1}$ are the permutations $w$ with $w^{-1}(i) \leq
h(w^{-1}(i+1))$ for all $i<n$.  In the case of Peterson varieties, 
this condition is equivalent to 
\begin{equation}\label{eq:fixedpt-condition}
w^{-1}(i) \leq w^{-1}(i+1)+1 \quad  \mbox{for all} \; 1 \leq i<n.
\end{equation}
In particular, either
$w^{-1}(i+1) > w^{-1}(i)$ or $w^{-1}(i+1) = w^{-1}(i)-1$.  
This means that the entries in the one-line notation for $w^{-1}$,
read from left to right, must either increase or, alternatively,
decrease by exactly $1$. The
one-line notation for $w^{-1}$ is therefore of the form
\begin{equation}\label{eq:w-oneline}
w^{-1} = ( j_1, j_1 -1, \ldots, 1, j_2, j_2-1, \ldots, j_1+1, \ldots,
n, n-1, \ldots, j_m+1),
\end{equation}
where \(1 \leq j_1 < j_2 < \cdots < j_m < n\) is any sequence of
strictly increasing integers. 
It turns out that for our purposes the complement in $\{1,2,\ldots,n-1\}$ of
the set $\{j_1, j_2, \ldots, j_m\}$ will be more useful.  Thus for each permutation \(w \in S_n\)
satisfying~\eqref{eq:fixedpt-condition} we define the subset of
$\{1,2,\ldots, n-1\}$ given by
\[
{\mathcal{A}} := \{ i: w^{-1}(i) = w^{-1}(i+1)+1 \textup{ for } 1 \leq i\leq n-1\} \subseteq
\{1,2,\ldots, n-1\}.
\]
Informally, $\mathcal{A}$ consists of those indices for which the
one-line notation of $w^{-1}$ decreases by $1$. This argument shows that the
permutations $w \in S_n$ satisfying~\eqref{eq:w-oneline} are in
bijective correspondence with the set of subsets $\mathcal{A}$. Furthermore, note that
the $n\times n$ permutation matrix associated to the $w^{-1}$ above is
block diagonal with blocks of size $j_1, (j_2 - j_1), \cdots,
(n-j_m)$, each of which has $1$'s on the antidiagonal and $0$
elsewhere. Thus a permutation $w^{-1}$ of
the form~\eqref{eq:w-oneline} is its own inverse: $w^{-1} = w$.
Henceforth we denote by $w_{\mathcal{A}} \in S_n$ the permutation
$w^{-1}=w =: w_{\mathcal{A}}$ corresponding as above to a subset
${\mathcal{A}} \subseteq \{1,2,\ldots,n-1\}$.

\begin{example}\label{example:S7}
Suppose $n = 7$ and ${\mathcal{A}} = \{1,2,3,5\}$. Then
\[
w_{\mathcal{A}} = (4,3,2,1,6,5,7)   \in S_7
\]
and the corresponding $7 \times 7$ permutation matrix is given by 
\[
\left[ \begin{array}{ccccccc} 0 & 0 & 0 & 1 & 0  & 0 & 0 \\ 0 & 0 & 1
    & 0 & 0 & 0 & 0 \\ 0 & 1 & 0 & 0 & 0 & 0 & 0 \\ 1 & 0 & 0 & 0 & 0
    & 0 & 0 \\ 0 & 0 & 0 & 0 & 0 & 1 & 0 \\ 0 & 0 & 0 & 0 & 1 & 0 & 0
    \\ 0 & 0 & 0 & 0 &  0 & 0 & 1 \end{array} \right].
\]
\end{example}

There is a natural decomposition of each set ${\mathcal{A}}$ into subsets
corresponding to the block submatrices in the permutation matrix
representation of $w_{\mathcal{A}}$. We make the following definition. 
\begin{definition}
A {\bf maximal consecutive (sub)string} of ${\mathcal{A}}$ is a set of consecutive
integers $\{a_1, a_1+1, \ldots, a_1+k\} \subseteq {\mathcal{A}}$ such that neither $a_1-1$
nor $a_1+k+1$ is in ${\mathcal{A}}$.  Let $a_2 := a_1+k$. We denote the corresponding
maximal consecutive substring by $[a_1,a_2]$.
\end{definition}
Any ${\mathcal{A}}$ uniquely decomposes into a disjoint union of
maximal consecutive substrings 
\[ {\mathcal{A}} = [a_1, a_2] \cup [a_3,a_4] \cup \cdots \cup [a_{m-1},a_m].\] 
In Example \ref{example:S7}, the maximal
consecutive strings are $\{1,2,3\}$ and $\{5\}$. 

Given a consecutive string $[a_j, a_{j+1}]=\{a_j, a_{j}+1,\ldots,
a_{j+1}-1,a_{j+1}\}$, the element $w_{[a_j, a_{j+1}]}$ is the largest element with respect to
Bruhat order in the subgroup $S_{[a_j, a_{j+1}]}$ of permutations of
$\{a_j, a_{j}+1,\ldots,a_{j+1}-1,a_{j+1}\}$. We fix the following
reduced-word decomposition of $w_{[a_j, a_{j+1}]}$:
\begin{equation}
  \label{eq:wjk-reduced-word}
  w_{[a_j, a_{j+1}]} = \prod_{k=0}^{a_{j+1}-a_j} \left( \prod_{i=0}^{a_{j+1}-a_j-k}
    s_{a_j+i} \right).
\end{equation}
Here we take the convention that a product is always composed from the
left to the right, so $\prod_{i=0}^{k} \beta_i = \beta_0 \cdot
\beta_1 \cdots \beta_k$ for any expressions $\beta_i$. 

\begin{example} 
Continuing with Example~\ref{example:S7}, for the maximal consecutive string \([1,3]:=\{1,2,3\}\)
we have 
\[
w_{[1,3]} = s_1 s_2 s_3 s_1 s_2 s_1.
\]
\end{example} 

A reduced-word decomposition for $w_{\mathcal{A}}$ is then obtained by taking
the product of the $w_{[a_j, a_{j+1}]}$ for each of the maximal consecutive
substrings $[a_j, a_{j+1}]$ of ${\mathcal{A}}$, ordered so that maximal consecutive substrings increase from left 
to right.  (Simple
transpositions commute if their indices differ by at least $2$, so if $[a_j, a_{j+1}], [a_j', a_{j+1}']$ are disjoint maximal 
consecutive substrings of ${\mathcal{A}}$ then
$w_{[a_j, a_{j+1}]}$ and $w_{[a_j', a_{j+1}']}$ commute.) 
In other words, suppose \({\mathcal{A}}
= [a_1, a_2] \cup [a_3,a_4] \cup \cdots \cup [a_{m-1},a_m]\) is a decomposition into 
maximal consecutive substrings of ${\mathcal{A}}$ with $a_1 < a_2
< \ldots < a_m$. Then we fix the reduced-word decomposition
\begin{equation}\label{eq:wA-reduced-word} 
  w_{\mathcal{A}} = w_{[a_1, a_2]} w_{[a_3, a_4]} w_{[a_5, a_6]}
  \cdots w_{[a_{m-1}, a_m]}. 
\end{equation}

\begin{example} 
Continuing further with Example~\ref{example:S7}, we have
\[w_{\mathcal{A}} = w_{[1,3]}w_{[5,5]} = s_1 s_2 s_3 s_1 s_2 s_1s_5.\]
\end{example}

\section{GKM theory on the flag variety and restriction to
  $S^1$-fixed points on Peterson varieties}\label{sec:GKM}

In this section, we describe the general framework used for
our computations. Our main conceptual tool is the well-known
GKM theory for $T$-spaces, as recounted in the introduction. 
Only two aspects of GKM theory are essential to our
discussion: first, we use the injectivity of the restriction map to
the equivariant cohomology of the torus-fixed points; and
second, we use certain special classes, which we call {\bf flow-up
  classes}, to build a natural module basis over the
equivariant cohomology of a point for the equivariant
cohomology of the $T$-space.

We begin by recalling well-known results. 
The flag variety \(G/B \cong \mathcal{F}\ell ags(\C^n)\) is equipped
with a natural $T$-action given by left multiplication on cosets; the fixed points are
precisely the isolated points $wB \in G/B$ corresponding to the permutations \(w \in
S_n.\)  The $T$-equivariant inclusion $\imath:~(G/B)^T
\into G/B$ induces a ring homomorphism from the $T$-equivariant cohomology of
$G/B$ to that of its $T$-fixed points, i.e. 
\begin{equation}\label{eq:GmodB-localization}
\imath^*: H^*_T(G/B) \into H^*_T((G/B)^T) \cong \bigoplus_{w \in S_n}
H^*_T(\pt)
\end{equation}
and it is well-known that $\imath^*$ is an injection.  Note that the
codomain of the restriction map~\eqref{eq:GmodB-localization} is a
direct sum of polynomial rings $H^*_T(\pt) \cong \Sym(\t^*)$. Since
$\imath^*$ is injective, we may therefore uniquely specify elements of
$H^*_T(G/B)$ as a list of polynomials in $\Sym(\t^*) \cong \C[t_1, t_2,
\ldots, t_n]$.

A classical result in Schubert calculus is that the $T$-equivariant
cohomology ring $H^*_T(G/B)$ has an $H^*_T(\pt)$-module basis given by
the ($T$-equivariant) Schubert classes $\{\sigma_w\}_{w \in
  S_n}$ \cite{BerGelGel73, Dem74}. By the above discussion, we may think of
$\sigma_w$ in terms of its image under $\iota^*$ in $H^*_T((G/B)^T)$,
which in turn we view as a function \(S_n \to \Sym(\t^*).\) Let
$\sigma_w(w') \in \Sym(\t^*)$ denote the value of $\sigma_w$ at $w'
\in S_n$.

The Schubert classes $\sigma_w$ satisfy certain computationally
convenient properties with respect to the Bruhat order on
$S_n$. First, they are \textbf{upper-triangular} in an appropriate
sense, namely: 
\begin{equation}\label{eq:GmodB-upper-triangular} 
\sigma_w(v) = 0 \quad \mbox{if } v \not \geq w
\end{equation}
and 
\begin{equation}\label{eq:GmodB-nondeg}
\sigma_w(w) \neq 0. 
\end{equation} 
Second, they are \textbf{minimal} among upper-triangular classes: if
$\sigma_{w'}$ satisfies the equations~\eqref{eq:GmodB-upper-triangular} for $w$,
then
\begin{equation}\label{eq:GmodB-minimality} 
\sigma_w(w) \textup{ divides } \sigma_{w'}(w).
\end{equation}

One of the main results of this manuscript is to construct a
suitable additive $H^*_{S^1}(\pt)$-module basis for the $S^1$-equivariant
cohomology of Peterson varieties, similar to the Schubert
classes in $H^*_T(G/B)$ in the sense that they satisfy analogous
upper-triangularity and minimality 
conditions. This allows us to develop a theory of
``generalized ($S^1$-equivariant) Schubert calculus'' in
the equivariant cohomology of Peterson varieties. Moreover, the module basis is obtained as a
subset of the images of the Schubert classes $\sigma_w$ in the
$S^1$-equivariant cohomology of the Peterson variety, as we explain in
Section~\ref{sec:module-basis}. Here and below, we set the stage for
this main result by developing the
necessary preliminary tools and terminology.

Let $Y$ denote the Peterson variety of type $A_{n-1}$. 
As seen in Section \ref{sec:fixed-points}, the variety $Y$ is naturally an
$S^1$-space for a certain subtorus $S^1$ of $T$; moreover 
\(Y^{S^1} = (G/B)^T \cap Y.\) Recall that there is a
natural forgetful map from $T$-equivariant cohomology to
$S^1$-equivariant cohomology obtained by the inclusion map of groups
\(S^1 \into T\).  These facts allow us to extend
the map~\eqref{eq:GmodB-localization} to the commutative diagram 
\begin{equation}\label{eq:comm-diag}
\xymatrix{
H^*_T(G/B) \ar[r] \ar[d] & H^*_T((G/B)^T) \ar[d] \\
H^*_{S^1}(G/B) \ar[r] \ar[d] & H^*_{S^1}((G/B)^T) \ar[d] \\
H^*_{S^1}(Y) \ar[r] & H^*_{S^1}(Y^{S^1}). 
}
\end{equation}

The images of the equivariant Schubert classes $\{\sigma_w\}$
under the composition of the natural maps $H^*_T(G/B) \to
H^*_{S^1}(G/B) \to H^*_{S^1}(Y)$ are 
crucial to our discussion, so we make a
definition. 

\begin{definition}\label{def:Peterson-Schubert}
  Let $\sigma_w$ be an equivariant Schubert class in $H^*_T(G/B)$. Let
  $p_w \in H^*_{S^1}(Y)$ be the image of $\sigma_w$ under the 
 ring map $H^*_T(G/B) \to H^*_{S^1}(Y)$ in~\eqref{eq:comm-diag}. We call $p_w$ the {\bf Peterson
    Schubert class corresponding to $w$}.
\end{definition}

We want to specify the Peterson Schubert
class $p_w$ by its image in
$H^*_{S^1}(Y^{S^1})$ via the bottom horizontal arrow
in~\eqref{eq:comm-diag}.  For this we need the following.

\begin{theorem}\label{theorem:injectivity}
Let $Y$ be the type $A_{n-1}$ Peterson variety, equipped with the
natural $S^1$-action defined by~\eqref{eq:def-circle}. Then 
\begin{itemize}
\item $H^*_{S^1}(Y) \cong H^*_{S^1}(\pt) \otimes H^*(Y)$ as
  $H^*_{S^1}(\pt)$-modules, and 
\item the inclusion $Y^{S^1} \into Y$ induces a ring map 
\[
\imath^*: H^*_{S^1}(Y) \to H^*_{S^1}(Y^{S^1})
\]
which is injective. 
\end{itemize}
\end{theorem}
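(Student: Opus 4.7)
The plan is to derive both statements from the general principle of \emph{equivariant formality}. The crucial input is the theorem of Tymoczko \cite{Tym06, Tym07b}, cited in the introduction, which shows that the Peterson variety $Y$ admits a paving by affine cells. Consequently, the ordinary cohomology $H^*(Y)$ is a finite-dimensional $\C$-vector space concentrated in even degrees.

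For the first assertion, I would analyze the Serre spectral sequence of the Borel fibration
\[
Y \hookrightarrow Y \times_{S^1} ES^1 \longrightarrow BS^1.
\]
Both $H^*(BS^1) \cong \C[t]$ (with $t$ in degree $2$) and $H^*(Y)$ are concentrated in even degrees. All differentials therefore vanish for parity reasons, so the spectral sequence collapses at $E_2$. Since we work over a field, the associated filtration splits, yielding the desired $H^*_{S^1}(\pt)$-module isomorphism $H^*_{S^1}(Y) \cong H^*_{S^1}(\pt) \otimes H^*(Y)$. In particular, $H^*_{S^1}(Y)$ is a free $\C[t]$-module.

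For the second assertion, I would invoke the classical Borel--Atiyah--Bott--Quillen localization theorem: for any compact $S^1$-space $X$, the kernel of the restriction map $H^*_{S^1}(X) \to H^*_{S^1}(X^{S^1})$ is a $\C[t]$-torsion submodule. Combined with the first part, $H^*_{S^1}(Y)$ is free and hence torsion-free over $\C[t]$, forcing the kernel of $\imath^*$ to be zero.

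The main subtlety is to verify that these standard arguments apply in this setting, since $Y$ is singular in general and one cannot invoke equivariant-symplectic or equivariant-Morse-theoretic arguments directly. However, both the paving-by-affines argument for equivariant formality and the localization theorem are purely topological and apply to arbitrary compact $S^1$-spaces, regardless of singularities. The authors' appendix, mentioned in the introduction, presumably contains a general lemma in Borel-equivariant cohomology with field coefficients that packages exactly this argument in the form needed here.
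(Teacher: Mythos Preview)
Your proposal is correct and follows essentially the same argument as the paper: paving by affines gives even-degree ordinary cohomology, which forces the Leray--Serre spectral sequence of the Borel fibration to collapse (yielding freeness over $\C[t]$), and then the abstract localization theorem identifies the kernel of $\imath^*$ with the torsion submodule, which is zero. One small inaccuracy: the appendix does not package this argument; it instead proves a lemma about recognizing module bases for $H^*_{S^1}(Y)$ from degree counts, used later in the proof of Theorem~\ref{theorem:pvA-basis}.
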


\begin{proof} 
It is well-known (\cite[Chapter III, Section 14]{BottTu}, \cite[Chapter
6]{GS}) that if the 
ordinary cohomology of $Y$ is concentrated in even degree, then 
the Leray-Serre spectral sequence for the fibration \(Y \to Y \times_T
ET \to BT\) collapses, which then implies the first conclusion of the
theorem. 
Recall that the (complex) affine cells in a paving by affines\footnote{A paving by affines is like a cell decomposition, but the closure conditions on a paving by affines are weaker than for a 
cell decomposition.} of a complex algebraic variety induce homology
generators \cite[19.1.11]{Ful84}; in particular, since the cells are
complex, they are even-dimensional and hence a complex variety with a
paving by affines has ordinary cohomology only in even
degree. Peterson varieties in type $A_{n-1}$ admit a paving by affines
\cite{Tym06}. 
Moreover, the abstract localization theorem
\cite[Theorem 11.4.4]{GS} states that the kernel of $\imath^*$ is the
module of torsion elements in $H^*_T(Y)$. Since we have just seen that
$H^*_T(Y)$ is a free $H^*_T(\pt)$-module, the kernel must be $0$, and
$\imath^*$ is injective, as desired. 
\end{proof}

The theorem above implies that we may think of $p_w \in H^*_{S^1}(Y)$ purely in terms of their
images in $H^*_{S^1}(Y^{S^1})$, as in the case of equivariant Schubert
classes in $H^*_T(G/B)$. Since the restriction map is injective, we will abuse notation and
refer to the image of $p_w$ in $H^*_{S^1}(Y^{S^1})$ also as $p_w$. The $S^1$-fixed points of $Y$ are isolated so 
\[
H^*_{S^1}(Y^{S^1}) \cong \bigoplus_{w' \in Y^{S^1}} H^*_{S^1}(w') \cong
\bigoplus_{w' \in Y^{S^1}} \C[t].
\]
This means each $p_w$ is a function $Y^{S^1} \to \C[t]$ just as in the case of $G/B$. Following our
notation for $G/B$, if $w' \in Y^{S^1}
\subseteq (G/B)^T$ is a fixed point, we denote by $p_w(w')$ the value of the
restriction of $p_w$ to $w'$. 

Finally, we observe that the restrictions 
$p_w(w')$ may be computed using the restrictions $\sigma_w(w')$ of
the equivariant Schubert classes on $G/B$ and the maps in ~\eqref{eq:comm-diag} .

\begin{proposition}\label{prop:compute-pw}
  Let $Y$ be the type $A_{n-1}$ Peterson variety and let $p_w$ be a
  Peterson Schubert class corresponding to $w \in S_n$. Let $w' \in Y^{S^1} \subseteq (G/B)^T \cong
  S_n$. Then $p_w(w') \in H^*_{S^1}(\pt)$ is the image of $\sigma_w(w')
  \in \Sym(\t^*) \cong \C[t_1, t_2, \ldots, t_n]$
  under the projection map 
\begin{equation}\label{eq:def-piS1}
\xymatrix @R=.1in {
\pi_{S^1}: \C[t_1, t_2, \ldots, t_n] \ar[r] & \C[t]  \\
t_i \; \ar@{|->}[r] & (n-i+1) t. \\
}
\end{equation}
\end{proposition}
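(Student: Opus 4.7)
The plan is to chase the commutative diagram~\eqref{eq:comm-diag}. By definition, $p_w$ is the image of $\sigma_w$ under the map $H^*_T(G/B) \to H^*_{S^1}(Y)$ obtained as the composition of the forgetful map $H^*_T(G/B) \to H^*_{S^1}(G/B)$ (induced by the inclusion $S^1 \hookrightarrow T$) and the restriction $H^*_{S^1}(G/B) \to H^*_{S^1}(Y)$ (induced by the inclusion $Y \hookrightarrow G/B$). The plan is to compute $p_w(w')$ by taking an alternate route around the diagram: first restrict $\sigma_w$ to $(G/B)^T$, then apply the forgetful map to land in $H^*_{S^1}((G/B)^T)$, and finally project onto the $Y^{S^1}$-summand. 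Commutativity of~\eqref{eq:comm-diag} guarantees this yields $p_w(w')$, so I only need to understand what each of these operations does componentwise.

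The first operation sends $\sigma_w$ to the tuple $(\sigma_w(v))_{v \in S_n} \in \bigoplus_{v \in S_n} \Sym(\t^*)$. Since $(G/B)^T = (G/B)^{S^1}$, the middle horizontal map simply applies the forgetful map $\Sym(\t^*) \cong H^*_T(\pt) \to H^*_{S^1}(\pt) \cong \C[t]$ coordinatewise. The final restriction from $(G/B)^T$ to $Y^{S^1}$ is projection onto the summands indexed by $w' \in Y^{S^1}$. Thus $p_w(w')$ is precisely the image of $\sigma_w(w')$ under the forgetful map $H^*_T(\pt) \to H^*_{S^1}(\pt)$.

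The heart of the argument, and the main (if modest) obstacle, is to verify that this forgetful map is the map $\pi_{S^1}$ of~\eqref{eq:def-piS1}. For this I will work at the level of characters. The coordinate character $t_i \in \t^*$ sends a diagonal matrix $\mathrm{diag}(a_1,\ldots,a_n)$ to $a_i$. Restricting $t_i$ along the inclusion $S^1 \hookrightarrow T$ of~\eqref{eq:def-circle} gives the character of $S^1$ sending $t \mapsto t^{n-i+1}$, so on the level of differentials (equivalently, on $H^2$ of classifying spaces, which generates $H^*_{S^1}(\pt)$) the image of $t_i$ is $(n-i+1)t$. Extending multiplicatively, the forgetful map is exactly $\pi_{S^1}$.

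Combining these observations gives $p_w(w') = \pi_{S^1}(\sigma_w(w'))$, which is the claim.
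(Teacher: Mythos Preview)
Your proof is correct and takes essentially the same approach as the paper's: both arguments chase the commutative diagram~\eqref{eq:comm-diag}, identify the forgetful map $H^*_T(\pt) \to H^*_{S^1}(\pt)$ as $\pi_{S^1}$ using the explicit description~\eqref{eq:def-circle} of the circle subgroup, and observe that the restriction to $Y^{S^1}$ is simply projection onto the relevant summands. Your explanation of why $t_i \mapsto (n-i+1)t$ via characters and their differentials is slightly more detailed than the paper's, but the underlying computation is identical.
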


\begin{proof}
Recall that 
\[
H^*_T((G/B)^T) \cong \bigoplus_{w \in S_n} \Sym(\t^*) 
\cong \bigoplus_{w \in S_n} \C[t_1, t_2, \ldots, t_n]
\] 
and
\[
H^*_{S^1}((G/B)^T) \cong \bigoplus_{w \in S_n} \Sym(\Lie(S^1)^*) \cong
\bigoplus_{w \in S_n} \C[t].
\]
The top right
arrow in~\eqref{eq:comm-diag} that sends
$H^*_T((G/B)^T)  \to H^*_{S^1}((G/B)^T)$
is induced from the projection map \(\Sym(\t^*) \to
\Sym(\Lie(S^1)^*)\) coming from the inclusion \(\Lie(S^1) \into \t.\)
The definition of the subgroup $S^1$ in~\eqref{eq:def-circle} implies
that each $t_i$ projects to $(n-i+1)t$. For the bottom right
arrow in~\eqref{eq:comm-diag}, we recall that $(G/B)^{S^1} =
(G/B)^T$, as observed in Section~\ref{sec:fixed-points}. We then
see that the map
\[
\bigoplus_{w \in S_n} \C[t] \cong H^*_{S^1}((G/B)^{S^1}) \to
H^*_{S^1}(Y^{S^1}) \cong \bigoplus_{w \in Y^{S^1}} \C[t] 
\]
is the identity on each component corresponding to $w \in Y^{S^1}
\subseteq S_n \cong (G/B)^{S^1}$ and is $0$ on each component corresponding to $w \in
S_n \setminus Y^{S^1}$. More colloquially, it kills the components in
the direct sum 
associated to $S^1$-fixed points in $G/B$ which do 
not appear in $Y$. Composition of the two arrows and commutativity of the diagram in~\eqref{eq:comm-diag} give the desired
result. 
\end{proof}

\section{A $H^*_{S^1}(\pt;\Q)$-module basis for the $S^1$-equivariant
  cohomology of Peterson varieties}\label{sec:module-basis}

As recounted in Section~\ref{sec:GKM}, 
the equivariant Schubert classes $\{\sigma_w\}_{w \in S_n}$ have
properties which 
make them particularly convenient for Schubert-calculus
computations. One of the main results of this manuscript is an
explicit construction, in Theorem~\ref{theorem:pvA-basis}, of an
$H^*_{S^1}(\pt)$-module basis for the $S^1$-equivariant cohomology of
Peterson varieties which also satisfies upper-triangularity and
minimality conditions. As in classical Schubert calculus, this
makes the basis especially useful for explicit computations; we
exploit these properties to derive Monk formulas in
Section~\ref{sec:Monk}.

First we make precise the conditions satisfied by our module basis
of $H^*_{S^1}(Y)$. The upper-triangularity condition on
Schubert classes is stated in terms of the Bruhat order on permutations $w \in S_n$ viewed as
$T$-fixed points in $G/B$.  Bruhat order restricts to $Y^{S^1}$ since $Y^{S^1}$ is a subset of
$(G/B)^T \cong S_n$. 
We use this partial order, also called Bruhat order, on the
$S^1$-fixed points of $Y$.

Next we define permutations $v_{\mathcal{A}} \in S_n$ which are
naturally associated to each subset ${\mathcal{A}} \subseteq \{1,2,\ldots,
n-1\}$. We saw in Section~\ref{subsec:torus-fixed-pts} that $Y^{S^1}$ is enumerated by the set of subsets ${\mathcal{A}}$ of $\{1,2,\ldots, n-1\}$. We will see that the Peterson Schubert classes $p_{v_\mathcal{A}}$ associated
to the permutations $v_{\mathcal{A}}$ form an additive
$H^*_{S^1}(\pt)$-module basis for $H^*_{S^1}(Y)$, thus playing a role analogous to Schubert
classes in $H^*_T(G/B)$. We have the following.

\begin{definition}\label{def:vA}
  Let \({\mathcal{A}} = \{j_1 < j_2 < \cdots j_m\}\)
 be a subset of \(\{1,2,\ldots, n-1\}.\) We define the element $v_{\mathcal{A}} \in S_n$ 
 to be the product of simple transpositions whose indices
 are in ${\mathcal{A}}$, in increasing order, i.e. 
\[
v_{\mathcal{A}} := s_{j_1} s_{j_2} \cdots s_{j_m} = \prod_{i=1}^m s_{j_i}.
\]
\end{definition}

Each subset $\mathcal{A} \subseteq \{1,2,\ldots, n-1\}$
corresponds to a unique permutation of the form $v_{\mathcal{A}}$
so the collection of Peterson Schubert classes
$\{p_{v_{\mathcal{A}}}\}$ for all subsets ${\mathcal{A} \subseteq \{1,2,\ldots, n-1\}}$
gives rise to a collection of elements in $H^*_{S^1}(Y)$ in
one-to-one correspondence with the $S^1$-fixed points of $Y$.

Our next tasks are to show that this collection
$\{p_{v_{\mathcal{A}}}\}$ satisfies conditions analogous to~\eqref{eq:GmodB-upper-triangular}
with respect to the (restricted) Bruhat order. We enumerate the
conditions precisely. 
\begin{enumerate}
\item Upper-triangularity: 
\begin{equation}\label{eq:Peterson-upper-triangular} 
  p_{v_{\mathcal{A}}}(w_{\mathcal{B}}) = 0 \quad \mbox{if} \; \; w_{\mathcal{B}} \not \geq v_{\mathcal{A}} 
\end{equation}
and 
\begin{equation}\label{eq:Peterson-upper-triangular-nonzero}
 p_{v_{\mathcal{A}}}(w_{\mathcal{A}}) \neq 0. 
\end{equation}
\item Minimality: 
\begin{equation}\label{eq:Peterson-minimality}
p_{v_{\mathcal{A}}}(w_{\mathcal{A}}) \textup{ divides } p_w(w_{\mathcal{A}}) \textup{ in $\C[t]$}
\end{equation}
if  $p_w$ is any Peterson Schubert class satisfying the upper-triangularity condition \eqref{eq:Peterson-upper-triangular} for ${\mathcal{A}}$.
\end{enumerate}

We now prove that the $p_{v_{\mathcal{A}}}$ satisfy
the 
upper-triangularity condition, which will
naturally lead to our main Theorem~\ref{theorem:pvA-basis};
in the next section, we find that the collection
$\{p_{v_{\mathcal{A}}}\}$ satisfies the minimality condition and is unique in an appropriate sense
(Proposition~\ref{prop:pvA-unique}).

Note that the definition of $v_{\mathcal{A}}$ immediately implies that 
\[
s_j < v_{\mathcal{A}} \quad \textup{ for all } j \in {\mathcal{A}}. 
\]
We record some basic facts below which will be important in what
follows. The proofs are straightforward and left to the reader. 

\begin{fact}\label{fact:vA-length}
The Bruhat-length of $v_{\mathcal{A}}$ is the size of the set
$\mathcal{A}$, i.e. \(\ell(v_{\mathcal{A}}) = |\mathcal{A}|.\) In
particular, the decomposition in Definition~\ref{def:vA} is
minimal-length. 
\end{fact}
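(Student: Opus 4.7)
The upper bound $\ell(v_{\mathcal{A}}) \le |\mathcal{A}|$ is immediate from Definition~\ref{def:vA}, which exhibits $v_{\mathcal{A}}$ as a product of $|\mathcal{A}|$ simple transpositions. So the task is to prove the matching lower bound, i.e.\ to rule out any cancellation among the factors $s_{j_1}, s_{j_2}, \ldots, s_{j_m}$. My plan is to compute $v_{\mathcal{A}}$ explicitly in one-line notation and count its inversions, using the maximal consecutive substring decomposition already introduced in Section~\ref{subsec:torus-fixed-pts}.

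First I decompose $\mathcal{A}$ into its maximal consecutive substrings $[a_1,a_2], [a_3,a_4], \ldots, [a_{2l-1},a_{2l}]$. Two indices drawn from distinct substrings differ by at least $2$, so the corresponding blocks $s_{a_{2i-1}} s_{a_{2i-1}+1} \cdots s_{a_{2i}}$ and $s_{a_{2j-1}} \cdots s_{a_{2j}}$ commute, and each block acts as the identity outside the index set $\{a_{2i-1}, a_{2i-1}+1, \ldots, a_{2i}+1\}$. By maximality these supports are pairwise disjoint, so the one-line notation of $v_{\mathcal{A}}$ is obtained by applying each block to its own interval independently; in particular, no inversions of $v_{\mathcal{A}}$ involve indices coming from two different blocks.

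Next I analyze a single block $u_{[a,b]} := s_a s_{a+1} \cdots s_b$. A short right-to-left computation (or induction on $b-a$) shows that $u_{[a,b]}$ is the cyclic permutation sending $b+1 \mapsto a$ and $i \mapsto i+1$ for $a \le i \le b$, and fixing everything else. The inversions of this permutation are exactly the pairs $(i, b+1)$ with $a \le i \le b$, giving $\ell(u_{[a,b]}) = b-a+1 = |[a,b]|$. Summing over the maximal consecutive substrings of $\mathcal{A}$ yields
\[
\ell(v_{\mathcal{A}}) \;=\; \sum_{i=1}^{l}(a_{2i} - a_{2i-1}+1) \;=\; |\mathcal{A}|,
\]
which is the desired equality. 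The ``in particular'' clause is then automatic: Definition~\ref{def:vA} already writes $v_{\mathcal{A}}$ as a product of $|\mathcal{A}| = \ell(v_{\mathcal{A}})$ simple transpositions, so that expression is reduced.

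The only mild obstacle is bookkeeping: one must check cleanly that inversions across different blocks do not arise. This is taken care of by the disjoint-support observation in the second paragraph, so no inversion pair $(p,q)$ with $p$ in one block's support and $q$ in another's can occur. With that in hand the inversion count reduces to the single-block calculation, which is elementary.
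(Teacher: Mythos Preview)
Your proof is correct. The paper itself does not give a proof of this fact; it is one of several ``basic facts'' whose ``proofs are straightforward and left to the reader.'' Your argument via the explicit one-line form of each block $s_a s_{a+1}\cdots s_b$ and the disjoint-support observation is a clean way to carry out what the paper omits, and the inversion count $b-a+1$ for the cycle $(a\,a{+}1\,\cdots\,b{+}1)$ is exactly right.
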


\begin{fact}\label{fact:vA-reduced-word-unique}
If $\mathcal{A} = [a_k, a_{k+1}]$ is a maximal consecutive string, then the word in
Definition \ref{def:vA} is the unique reduced word decomposition for
$v_{[a_k, a_{k+1}]}$. 
\end{fact}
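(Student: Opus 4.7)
The plan is to use Matsumoto's theorem (equivalently Tits' word problem theorem) for the symmetric group: any two reduced-word decompositions of a common element $w \in S_n$ are related by a finite sequence of moves of two types, namely commutations $s_i s_j \leftrightarrow s_j s_i$ when $|i-j| \geq 2$ and braid relations $s_i s_{i+1} s_i \leftrightarrow s_{i+1} s_i s_{i+1}$. If a given reduced word admits neither type of move as a local substitution on a subword, then it must be the unique reduced-word decomposition of the element it represents. So the task reduces to inspecting the word $s_{a_k} s_{a_k+1} s_{a_k+2} \cdots s_{a_{k+1}}$ from Definition~\ref{def:vA} and verifying that no braid move can be performed.

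Concretely, I would first invoke Fact~\ref{fact:vA-length} to observe that this expression is indeed reduced, having length $a_{k+1} - a_k + 1 = |\mathcal{A}|$. Then I would proceed by direct inspection of adjacent pairs and consecutive triples of simple reflections in the word. Because the indices of the simple reflections appearing are the strictly increasing sequence $a_k, a_k + 1, \ldots, a_{k+1}$, any two adjacent reflections $s_i, s_{i+1}$ in the word have indices differing by exactly $1$, so no commutation move is available anywhere in the word. Similarly, any three consecutive reflections in the word have indices of the form $i, i+1, i+2$, which is not of the form $i, i+1, i$ (or $i+1, i, i+1$), so no braid relation can be applied either.

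Combining these two observations with Matsumoto's theorem yields the claim: since the given reduced word is an isolated vertex in the graph of reduced words under braid moves, it is the only reduced-word decomposition of $v_{[a_k, a_{k+1}]}$.

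I do not anticipate a genuine obstacle here; the argument is essentially a direct verification once Matsumoto's theorem is invoked. The only point requiring mild care is making sure one is comparing the correct form of the braid relation (length-three pattern with index repetition) against the indices actually appearing in the word, and the strict monotonicity of indices in the word $s_{a_k} s_{a_k + 1} \cdots s_{a_{k+1}}$ rules this out cleanly.
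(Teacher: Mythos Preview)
Your proof is correct. The paper itself leaves this fact to the reader (``The proofs are straightforward and left to the reader''), so there is no argument to compare against directly; your invocation of Matsumoto's theorem, together with the observation that the strictly increasing index sequence $a_k, a_k+1, \ldots, a_{k+1}$ admits neither commutation nor braid substitutions, is a clean and standard way to fill in the details.
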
 

\begin{fact}\label{fact:vA-product}
If ${\mathcal{A}} =  [a_1, a_2] \cup [a_3,a_4] \cup \cdots \cup
[a_{m-1},a_m]$ is a decomposition 
of $\mathcal{A}$ into maximal consecutive substrings with $1 \leq a_1 < a_2 < \cdots < a_m < n$, then 
\[v_{\mathcal{A}} = v_{  [a_1, a_2] }v_{[a_3, a_4] } \cdots v_{  [a_{m-1}, a_m] }.\]
Moreover, there exists exactly one subword of $v_{\mathcal{A}}$ which
is equal to $v_{[a_i, a_{i+1}]}$. 
\end{fact}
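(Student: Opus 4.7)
The first assertion is largely a bookkeeping exercise. My plan is to start from Definition \ref{def:vA}, write $v_{\mathcal{A}} = s_{j_1} s_{j_2} \cdots s_{j_m}$ in strictly increasing index order, and then partition the index set $\{j_1, \ldots, j_m\} = \mathcal{A}$ according to which maximal consecutive substring each $j_i$ belongs to. Since the substrings $[a_1, a_2], [a_3, a_4], \ldots, [a_{m-1}, a_m]$ are themselves ordered with $a_1 < a_2 < a_3 < \cdots < a_m$, the indices within a single substring form a contiguous block of factors in the product, and the blocks appear from left to right in the expected order. Within the block corresponding to $[a_{2k-1}, a_{2k}]$, the factors are exactly $s_{a_{2k-1}} s_{a_{2k-1}+1} \cdots s_{a_{2k}}$, which by Fact \ref{fact:vA-reduced-word-unique} is the unique reduced word for $v_{[a_{2k-1}, a_{2k}]}$. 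The desired factorization then follows.

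For the uniqueness of the subword, my approach is to exploit the disjointness and commutation of the maximal consecutive substrings. By maximality of the substrings, any two indices from distinct substrings differ by at least $2$, so the corresponding simple transpositions commute. Moreover, the subgroups generated by simple transpositions from two different substrings act on disjoint subsets of $\{1, \ldots, n\}$. Thus any subword of $v_{\mathcal{A}}$ factors as a commuting product of its contributions from each maximal consecutive substring, and its total value lies in the parabolic subgroup $S_{[a_i, a_{i+1}+1]}$ (where $v_{[a_i, a_{i+1}]}$ lives) only if the contribution from every other substring is the identity.

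The key subtle step will be showing that a non-empty ``sub-selection'' of the reduced expression $s_{a_{2k-1}} s_{a_{2k-1}+1} \cdots s_{a_{2k}}$ cannot equal the identity. I would argue that any such sub-selection $s_{b_1} s_{b_2} \cdots s_{b_r}$ with distinct indices $a_{2k-1} \le b_1 < b_2 < \cdots < b_r \le a_{2k}$ is itself a reduced expression in $S_n$ (a standard fact, which one can verify directly in type $A$ by counting inversions after each application); hence its Bruhat-length is $r > 0$ and it is non-trivial. This forces the sub-selection from every substring other than $[a_i, a_{i+1}]$ to be empty. Finally, restricting attention to $[a_i, a_{i+1}]$, the chosen subword must still equal $v_{[a_i, a_{i+1}]}$, an element of length $a_{i+1} - a_i + 1$; since there are exactly $a_{i+1} - a_i + 1$ simple transpositions available in that block, all of them must be selected, and by Fact \ref{fact:vA-reduced-word-unique} the resulting subword is unique. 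I expect the verification that strictly-increasing sub-selections remain reduced expressions to be the main (though mild) technical hurdle; everything else is bookkeeping about the maximal-consecutive-substring decomposition.
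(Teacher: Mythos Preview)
Your proof plan is correct and complete. Note that the paper itself leaves this fact to the reader (``The proofs are straightforward and left to the reader''), so there is no argument in the paper to compare against; your approach---factoring by blocks, using the direct-product structure of the parabolic subgroups associated to non-adjacent strings, and invoking Fact~\ref{fact:vA-length} to see that any strictly-increasing sub-selection is reduced---is exactly the intended straightforward verification. One small simplification: in your last step you do not actually need Fact~\ref{fact:vA-reduced-word-unique}, since once you know all $a_{i+1}-a_i+1$ letters in the block must be selected, there is only one way to select all of them.
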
 

\begin{fact}\label{fact:unique-vA-in-wA}
  There exists exactly one reduced subword in the reduced word
  decomposition~\eqref{eq:wA-reduced-word} of $w_{\mathcal{A}}$ which
  is equal to $v_{\mathcal{A}}$.  (The proof uses
  uniqueness of the reduced word $v_{ [a_i, a_{i+1}] }$ for each
  minimal string, and examination of the definition of
  $w_{\mathcal{A}}$.)
\end{fact}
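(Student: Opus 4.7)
The plan is to reduce to the case of a single maximal consecutive substring and then argue by tracking occurrences of the letter $s_b$.

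First, using Fact~\ref{fact:vA-product} and the reduced-word decomposition~\eqref{eq:wA-reduced-word}, I would write $\mathcal{A} = [a_1,a_2] \cup [a_3,a_4] \cup \cdots \cup [a_{m-1},a_m]$ and note that $w_{\mathcal{A}} = w_{[a_1,a_2]} w_{[a_3,a_4]} \cdots w_{[a_{m-1},a_m]}$ while $v_{\mathcal{A}} = v_{[a_1,a_2]} v_{[a_3,a_4]} \cdots v_{[a_{m-1},a_m]}$. Because consecutive substrings are separated by gaps of at least $2$, simple transpositions occurring in distinct blocks commute, so the parabolic subgroups they generate form an internal direct product whose members factor uniquely. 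A reduced subword of $w_{\mathcal{A}}$ therefore splits into the concatenation of its restrictions to the reduced words for each $w_{[a_j, a_{j+1}]}$; the whole subword equals $v_{\mathcal{A}}$ if and only if each block's contribution is a reduced word for $v_{[a_j, a_{j+1}]}$. Hence the count is multiplicative in the blocks, and it suffices to treat a single maximal consecutive substring.

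Now I fix $\mathcal{A} = [a,b]$. By Fact~\ref{fact:vA-reduced-word-unique}, the unique reduced word for $v_{[a,b]}$ is $s_a s_{a+1} \cdots s_b$, so any reduced subword of $w_{[a,b]}$ equal to $v_{[a,b]}$ must literally consist of these letters in this order. In the reduced word~\eqref{eq:wjk-reduced-word}, row $k$ (for $0 \leq k \leq b-a$) contributes the factors $s_a, s_{a+1}, \ldots, s_{b-k}$, and the rows are concatenated in order. In particular, $s_b$ appears only once, at the end of row $0$. Any subword equal to $s_a s_{a+1} \cdots s_b$ must therefore use this unique occurrence of $s_b$. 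Inductively, once $s_{b-\ell+1}, s_{b-\ell+2}, \ldots, s_b$ have been pinned down in row $0$, the required $s_{b-\ell}$ must occur earlier in the global order; but every occurrence of $s_{b-\ell}$ outside row $0$ lies in a later row, so the only admissible position is within row $0$. This forces the subword to be exactly the initial segment $s_a s_{a+1} \cdots s_b$ of row $0$, giving precisely one subword.

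The main obstacle will be the block-reduction step: one must justify that any reduced subword of the concatenated reduced word for $w_{\mathcal{A}}$ which evaluates to $v_{\mathcal{A}}$ necessarily evaluates, block by block, to the individual factors $v_{[a_j, a_{j+1}]}$. This relies on the fact that the parabolic generated by transpositions from block $j$ intersects trivially with the parabolic generated by the transpositions from the remaining blocks (since their indices differ by at least $2$), which makes the factorization $v_{\mathcal{A}} = \prod_j v_{[a_j, a_{j+1}]}$ rigid under commuting rearrangements. Once this is in hand, the single-block induction above is a direct counting argument, and combining the blocks gives the desired uniqueness.
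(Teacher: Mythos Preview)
Your proposal is correct and follows precisely the approach the paper sketches in its parenthetical hint: reduce to a single maximal consecutive string via the commuting block structure (your first paragraph, relying on Facts~\ref{fact:vA-reduced-word-unique} and~\ref{fact:vA-product}), and then for a single string $[a,b]$ use that $s_b$ occurs exactly once in~\eqref{eq:wjk-reduced-word} to pin down the last letter and cascade backward through row~$0$. The paper itself does not spell out the details beyond the hint, but the same counting idea reappears in the proof of Lemma~\ref{lemma:quotient-of-pv}, where the uniqueness of the occurrence of the top transposition again forces the location of one factor of $v_{\mathcal{A}}$; your argument is the natural specialization of that reasoning to the case $\mathcal{B}=\mathcal{A}$.
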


The next lemma is the crucial observation which allows us to show that
the Peterson Schubert classes $p_{v_{\mathcal{A}}}$ corresponding to these
special Weyl group elements $v_{\mathcal{A}}$ are a $H^*_{S^1}(\pt)$-module
basis for $H^*_{S^1}(Y)$. The essence is that the Bruhat order on
$Y^{S^1}$ can be translated to the usual partial order on sets given
by containment.

\begin{lemma}\label{lemma:containment}
Let ${\mathcal{A}},{\mathcal{B}}$ be subsets in $\{1,2,\ldots, n-1\}$.  Then $v_{\mathcal{A}} \leq w_{\mathcal{B}}$ if and only if ${\mathcal{A}} \subseteq {\mathcal{B}}$.
\end{lemma}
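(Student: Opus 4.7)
The plan is to use the standard subword characterization of Bruhat order together with the two explicit reduced-word decompositions already at our disposal: the minimal reduced word for $v_{\mathcal{A}}$ given in Definition~\ref{def:vA}, and the reduced word for $w_{\mathcal{B}}$ assembled in \eqref{eq:wjk-reduced-word} and \eqref{eq:wA-reduced-word}. The two directions will be handled separately.

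For the direction ``$\mathcal{A} \subseteq \mathcal{B} \Rightarrow v_{\mathcal{A}} \leq w_{\mathcal{B}}$,'' I would exhibit $v_{\mathcal{A}}$ explicitly as a subword of the reduced word \eqref{eq:wA-reduced-word} for $w_{\mathcal{B}}$. Write $\mathcal{B} = [a_1,a_2]\cup[a_3,a_4]\cup\cdots\cup[a_{m-1},a_m]$ as in \eqref{eq:wA-reduced-word}, and correspondingly split $\mathcal{A} \subseteq \mathcal{B}$ into blocks $\mathcal{A}_k := \mathcal{A}\cap[a_{2k-1},a_{2k}]$. The key observation is that the reduced word \eqref{eq:wjk-reduced-word} for $w_{[a_{2k-1},a_{2k}]}$ \emph{begins} with the string $s_{a_{2k-1}} s_{a_{2k-1}+1} \cdots s_{a_{2k}}$, which contains the increasing product $\prod_{j \in \mathcal{A}_k} s_j = v_{\mathcal{A}_k}$ as a subword. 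Concatenating these subwords across all blocks produces a subword of \eqref{eq:wA-reduced-word} equal to $v_{\mathcal{A}_1} v_{\mathcal{A}_2}\cdots v_{\mathcal{A}_{m/2}}$, which is precisely the reduced word $v_{\mathcal{A}}$ from Definition~\ref{def:vA} (using that simple transpositions in different blocks have indices differing by at least $2$, hence commute). Thus $v_{\mathcal{A}} \leq w_{\mathcal{B}}$ in Bruhat order.

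For the reverse direction, I would argue by contrapositive: assume there exists $j \in \mathcal{A}\setminus\mathcal{B}$ and show $v_{\mathcal{A}} \not\leq w_{\mathcal{B}}$. Since $s_j$ appears in the reduced word for $v_{\mathcal{A}}$, we have $s_j \leq v_{\mathcal{A}}$, so if we had $v_{\mathcal{A}} \leq w_{\mathcal{B}}$ then by transitivity $s_j \leq w_{\mathcal{B}}$. But every simple reflection appearing in a reduced expression for $w_{\mathcal{B}}$ has index in $\mathcal{B}$ (inspect \eqref{eq:wjk-reduced-word} and \eqref{eq:wA-reduced-word}); equivalently, the support of $w_{\mathcal{B}}$ is exactly $\mathcal{B}$. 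Hence $s_j \leq w_{\mathcal{B}}$ would force $j \in \mathcal{B}$, a contradiction.

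The main subtlety to be careful about is the first direction, specifically the matching between the reduced word for $v_{\mathcal{A}}$ (which lists indices in strictly increasing order across \emph{all} of $\mathcal{A}$) and the block-by-block reduced word for $w_{\mathcal{B}}$. What makes it go through cleanly is that $\mathcal{A}_k$ lies entirely inside the interval $[a_{2k-1},a_{2k}]$ and the blocks of $\mathcal{B}$ are ordered increasingly in \eqref{eq:wA-reduced-word}, so the subword extracted globally is already in the correct increasing order. Everything else is a routine unpacking of the Bruhat subword criterion and the explicit reduced words fixed in Section~\ref{sec:fixed-points}.
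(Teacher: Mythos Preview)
Your proof is correct and follows essentially the same approach as the paper: both directions rest on the subword criterion for Bruhat order and the fact that the support of $w_{\mathcal{B}}$ (the set of indices of simple reflections appearing in any reduced word) is exactly $\mathcal{B}$. The paper dispatches the forward direction in a single phrase (``by definition of $v_{\mathcal{A}}$ and $w_{\mathcal{B}}$''), whereas you spell out the explicit subword block by block; the reverse direction is identical in both.
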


\begin{proof}
  If ${\mathcal{A}} \subseteq {\mathcal{B}}$ then $v_{\mathcal{A}}
  \leq w_{\mathcal{B}}$ by definition of $v_{\mathcal{A}}$ and
  $w_{\mathcal{B}}$.  Now suppose that $v_{\mathcal{A}} \leq
  w_{\mathcal{B}}$.  In particular this means that $s_i \leq
  v_{\mathcal{A}}$ for all $i \in {\mathcal{A}}$. Bruhat order is transitive
  so $s_i \leq w_{\mathcal B}$.  By definition of
  $w_{\mathcal{B}}$ this means $i \in {\mathcal{B}}$. Hence
  ${\mathcal{A}} \subseteq {\mathcal{B}}$ as desired. 
\end{proof}

We next develop 
tools to compute restrictions of
$p_{v_{\mathcal{A}}}$ at various fixed points $w_{\mathcal{B}} \in
Y^{S^1}$.  These methods allow us to prove the upper-triangularity
condition~\eqref{eq:Peterson-upper-triangular} with
respect to the partial order on sets (equivalent to the restriction of
Bruhat order by
Lemma~\ref{lemma:containment}). 
We begin with terminology.

\begin{definition}
Given a permutation $w$, a choice
of reduced-word decomposition ${\bf b}=(b_1, b_2, \ldots, b_{\ell(w)})$ of
$w$, and an index $i \in \{1,2,\ldots,\ell(w)\}$, define 
\begin{equation}\label{eq:def-beta}
r(i, \mathbf{b}) := s_{b_1} s_{b_2} \cdots s_{b_i-1} (t_{b_i} - t_{b_i+1}).
\end{equation}
\end{definition}

By definition $r(i, \mathbf{b})$ is an element
of $\Sym(\t^*) \cong \C[t_1, t_2, \ldots, t_n]$ of the form $t_j - t_k$ for some $j,k$. 
Classical results also show that $r(i,\mathbf{b})$ is in
fact a \textit{positive} root, namely, it has the form $t_j - t_k$
for $j<k$ \cite[Equation 4.1 and discussion]{Bil99}.  These positive
roots $r(i, \mathbf{b})$ are the building blocks of Billey's formula.

\begin{theorem}\label{theorem:billey} (``Billey's formula'', \cite[Theorem 4]{Bil99})
Let $w \in S_n$.  Fix a reduced word decomposition $w = s_{b_1}
s_{b_2} \cdots s_{b_{\ell(w)}}$ and let $\mathbf{b}= (b_1, b_2,
\ldots, b_{\ell(w)})$ be the sequence of its indices. 
Let $v \in S_n$. Then the value of the Schubert class $\sigma_v$ at
the $T$-fixed point $w$ is given by 
\begin{equation}\label{eq:billey-formula}
\sigma_v(w) = \sum r(i_1, \mathbf{b})r(i_2, \mathbf{b}) \cdots
r(i_{\ell(v)}, \mathbf{b})
\end{equation}
where the sum is taken over subwords $s_{b_{i_1}} s_{b_{i_2}} \cdots s_{b_{i_{\ell(v)}}}$ of $\mathbf{b}$ that are reduced words for $v$.
\end{theorem}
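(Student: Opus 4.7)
The plan is to prove Billey's formula by induction on the length $\ell(w)$, splitting the sum over subwords of $\mathbf{b} = (b_1, \ldots, b_k)$ according to whether or not the last position $k$ is used. The base case $\ell(w) = 0$, so that $w = e$ and $\mathbf{b}$ is empty, is immediate: the only reduced subword is the empty one, giving the empty product $1$ when $v = e$ and no contributions otherwise. This matches $\sigma_v(e) = \delta_{v,e}$, which follows from upper-triangularity~\eqref{eq:GmodB-upper-triangular} together with the normalization $\sigma_e = 1$.

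For the inductive step, set $\mathbf{b}' = (b_1, \ldots, b_{k-1})$, which is a reduced word for $w' := ws_{b_k}$ of length $k-1$. The key observation is that $r(i, \mathbf{b}) = r(i, \mathbf{b}')$ for every $i < k$, since the formula~\eqref{eq:def-beta} only involves the first $i$ letters of the chosen reduced word. The subwords of $\mathbf{b}$ of length $\ell(v)$ whose product equals $v$ split into two classes. Those avoiding position $k$ are exactly the reduced subwords of $\mathbf{b}'$ for $v$, contributing $\sigma_v(w')$ by the inductive hypothesis. Those using position $k$ have the form $(i_1, \ldots, i_{\ell(v)-1}, k)$ where the prefix is a reduced subword of $\mathbf{b}'$ for $vs_{b_k}$; such a subword exists only when $vs_{b_k} < v$, and summing over such subwords yields $r(k,\mathbf{b}) \cdot \sigma_{vs_{b_k}}(w')$, where $r(k,\mathbf{b}) = w'(t_{b_k} - t_{b_k+1})$.

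To close the induction, I would invoke the standard right-multiplication recursion for equivariant Schubert classes on $G/B$: for $w = w' s_i$ with $\ell(w) = \ell(w') + 1$,
\begin{equation*}
\sigma_v(w) = \begin{cases} \sigma_v(w'), & \text{if } vs_i > v, \\ \sigma_v(w') + w'(t_i - t_{i+1}) \cdot \sigma_{vs_i}(w'), & \text{if } vs_i < v. \end{cases}
\end{equation*}
This recursion follows from the characterization of $\sigma_v$ by upper-triangularity, the normalization $\sigma_v(v) = \prod_{\alpha \in \mathrm{inv}(v)} \alpha$, and GKM divisibility along the edges of the moment graph of $G/B$, and can equivalently be read from the BGG/divided-difference presentation of $\sigma_v$. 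Matching this recursion with the split sum above gives $\sigma_v(w) = \sum r(i_1,\mathbf{b}) \cdots r(i_{\ell(v)},\mathbf{b})$, completing the induction.

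The main obstacle is the displayed recursion: while classical, a self-contained derivation requires either setting up divided-difference operators on $H^*_T((G/B)^T)$ and verifying that both sides satisfy the three properties that uniquely characterize Schubert classes, or working through a Bott-Samuelson resolution. Once the recursion is granted, the remainder of the argument is essentially combinatorial bookkeeping on subwords. As a byproduct of the induction one obtains the nontrivial fact that the right-hand side of~\eqref{eq:billey-formula} is independent of the chosen reduced word $\mathbf{b}$ for $w$.
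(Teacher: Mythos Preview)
The paper does not supply its own proof of this theorem: it is quoted verbatim as Billey's result with a citation to \cite[Theorem~4]{Bil99}, and the text resumes immediately afterward with ``We refer to an individual summand of the expression~\eqref{eq:billey-formula}\ldots''. So there is no in-paper argument to compare against.

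Your induction on $\ell(w)$, splitting subwords by whether the last position is used, is exactly the standard route (and essentially Billey's own). The bookkeeping is correct: $r(i,\mathbf{b}) = r(i,\mathbf{b}')$ for $i<k$; subwords using position $k$ biject with reduced subwords of $\mathbf{b}'$ for $vs_{b_k}$, and these exist only when $\ell(vs_{b_k}) = \ell(v)-1$; and $r(k,\mathbf{b}) = w'(\alpha_{b_k})$. You are also right that the entire content of the proof is packed into the recursion
\[
\sigma_v(w's_i) = \sigma_v(w') + [vs_i < v]\, w'(\alpha_i)\,\sigma_{vs_i}(w'),
\]
which in Billey's paper is derived from Kostant's description of the $\sigma_v$ via the nil-Hecke (divided-difference) operators; your sketch of how to obtain it from GKM divisibility plus upper-triangularity and the normalization $\sigma_v(v) = \prod_{\alpha\in\mathrm{inv}(v)}\alpha$ is a legitimate alternative. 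Once that recursion is in hand, the rest is indeed combinatorial bookkeeping, and independence of the reduced word for $w$ falls out for free.
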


We refer to an individual summand of the
expression~\eqref{eq:billey-formula}, corresponding to a single reduced
subword $v=s_{b_{i_1}} s_{b_{i_2}} \cdots s_{b_{i_{\ell(v)}}}$ of $w$,
 as a {\bf
  summand in Billey's formula}. 
  
The following is a well-known consequence of the preceding discussion and theorem.

\begin{fact}\label{fact:billey-positive}
Each summand in Billey's formula for $\sigma_v(w)$ is a degree $\ell(v)$ polynomial in
the simple roots $\{t_i - t_{i+1}\}_{i=1}^{n-1}$ with {\em nonnegative} integer coefficients. 
\end{fact}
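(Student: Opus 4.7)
The plan is to reduce the fact to a statement about a single factor $r(i,\mathbf{b})$ and then multiply. More precisely, I would show that each $r(i,\mathbf{b})$, viewed as an element of $\Sym(\t^*)\cong \C[t_1,\dots,t_n]$, is already a nonnegative integer combination of the simple roots $\{t_j - t_{j+1}\}_{j=1}^{n-1}$ of degree $1$. Once this is in hand, the product of $\ell(v)$ such expressions expands, by distributivity, into a sum of degree-$\ell(v)$ monomials in the simple roots with nonnegative integer coefficients, which is exactly the claim.

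The key step is therefore the following single-factor claim: for every $i$ and every reduced word $\mathbf{b}$, the element $r(i,\mathbf{b})$ is a sum of simple roots with coefficients in $\{0,1\}$. This uses the positivity property already recorded in the excerpt: the classical result cited immediately before the statement of Billey's formula says $r(i,\mathbf{b})$ has the form $t_j - t_k$ with $j<k$. Given that, I would invoke the well-known type $A$ identity
\begin{equation*}
t_j - t_k \;=\; (t_j - t_{j+1}) + (t_{j+1} - t_{j+2}) + \cdots + (t_{k-1} - t_k),
\end{equation*}
valid for any $1\leq j<k\leq n$, which exhibits $r(i,\mathbf{b})$ explicitly as a sum of simple roots each with coefficient $1$. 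This disposes of the single-factor case.

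With the single-factor statement established, I would finish by observing that for any $\ell(v)$-tuple of positive roots written as simple-root expansions
\begin{equation*}
r(i_s,\mathbf{b}) \;=\; \sum_{j=1}^{n-1} c_{s,j}\,(t_j - t_{j+1}), \qquad c_{s,j}\in\Z_{\geq 0},
\end{equation*}
the product $r(i_1,\mathbf{b})\cdots r(i_{\ell(v)},\mathbf{b})$ expands into $\sum c_{1,j_1}\cdots c_{\ell(v),j_{\ell(v)}} (t_{j_1}-t_{j_1+1})\cdots (t_{j_{\ell(v)}}-t_{j_{\ell(v)}+1})$, which is manifestly a polynomial of degree $\ell(v)$ in the simple roots with nonnegative integer coefficients. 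No part of this is genuinely hard; the main content of the fact is really just the classical positivity of $r(i,\mathbf{b})$ together with the trivial type $A$ expansion of a positive root into simple roots. The only point requiring care is to make sure positivity is invoked for every factor, which is exactly what the cited reference guarantees.
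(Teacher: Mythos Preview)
Your proposal is correct and follows exactly the same approach as the paper: the paper's entire justification is the single sentence ``This is because each $r(i,\mathbf{b})$ is a \emph{positive} root, namely a non-negative integral linear combination of simple positive roots,'' and you have simply spelled out that sentence in more detail, supplying the telescoping expansion $t_j-t_k=\sum_{m=j}^{k-1}(t_m-t_{m+1})$ and the distributivity argument for the product.
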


This is because each $r(i,\mathbf{b})$ is
a \textit{positive} root, namely a non-negative integral linear combination
of simple positive roots. 
Fact~\ref{fact:billey-positive} is sometimes summarized by saying Billey's formula is
\textit{positive in the sense of Graham} \cite{Gra01}.
This positivity implies that if any
summand in Billey's formula for $\sigma_v(w)$ is nonzero, then the
entire sum is nonzero. From this we derive the following. 

\begin{corollary}\label{cor:pvAwA-nonzero} 
Let ${\mathcal{A}} \subseteq \{1, 2, \ldots, n-1\}$. Then 
\[
p_{v_{\mathcal{A}}}(w_{\mathcal{A}}) \neq 0.
\]
\end{corollary}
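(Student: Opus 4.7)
The plan is to compute $p_{v_{\mathcal{A}}}(w_{\mathcal{A}})$ via Proposition~\ref{prop:compute-pw} combined with Billey's formula, and then use positivity to conclude the result is nonzero.

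First I would apply Proposition~\ref{prop:compute-pw} to reduce the problem to showing that $\pi_{S^1}(\sigma_{v_{\mathcal{A}}}(w_{\mathcal{A}})) \neq 0$ in $\C[t]$. Next, using the fixed reduced-word decomposition $\mathbf{b}$ of $w_{\mathcal{A}}$ from~\eqref{eq:wA-reduced-word}, I would apply Billey's formula (Theorem~\ref{theorem:billey}) to expand $\sigma_{v_{\mathcal{A}}}(w_{\mathcal{A}})$ as a sum over reduced subwords of $\mathbf{b}$ equal to $v_{\mathcal{A}}$. By Fact~\ref{fact:unique-vA-in-wA}, at least one such subword exists, so this sum has at least one term.

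I would then analyze what $\pi_{S^1}$ does to each summand. Each Billey summand is a product of $\ell(v_{\mathcal{A}}) = |\mathcal{A}|$ positive roots $r(i, \mathbf{b}) = t_j - t_k$ with $j < k$. Under $\pi_{S^1}$, each such positive root maps to $(n-j+1)t - (n-k+1)t = (k-j)t$, a strictly positive integer multiple of $t$. Hence every summand maps to a strictly positive integer multiple of $t^{|\mathcal{A}|}$. In particular, by the positivity recorded in Fact~\ref{fact:billey-positive}, all summands have the same sign, so no cancellation occurs and the total is a strictly positive integer multiple of $t^{|\mathcal{A}|}$; in particular, it is nonzero.

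The argument is essentially routine bookkeeping once Billey's formula and the explicit form of $\pi_{S^1}$ are in hand. There is no serious obstacle: the only thing to verify is that positive roots map under $\pi_{S^1}$ to strictly positive multiples of $t$, which is immediate from the definition~\eqref{eq:def-piS1}, combined with the existence of the Billey subword guaranteed by Fact~\ref{fact:unique-vA-in-wA}.
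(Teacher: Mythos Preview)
Your proposal is correct and follows essentially the same route as the paper: invoke Fact~\ref{fact:unique-vA-in-wA} to guarantee a Billey summand, then use positivity of Billey's formula together with the fact that $\pi_{S^1}$ sends positive roots to strictly positive multiples of $t$ to rule out cancellation. The paper phrases the positivity step via the expansion in simple roots (Fact~\ref{fact:billey-positive}) and notes each $t_i - t_{i+1}$ maps to $t$, whereas you work with general positive roots $t_j - t_k \mapsto (k-j)t$; these are equivalent.
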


\begin{proof}
We noted in Fact~\ref{fact:unique-vA-in-wA}
that $v_{\mathcal{A}}$ can be found as a subword of $w_{\mathcal{A}}$.
This implies that
$\sigma_{v_{\mathcal{A}}}(w_{\mathcal{A}}) \neq 0$ by the positivity
(in the sense of Graham) 
of Billey's formula. 
The projection $\Sym(\t^*) \cong \C[t_1, t_2, \ldots, t_n]
\to \C[t]$ sends each $t_i - t_{i+1}$ to
$t$.  Hence the image in $\C[t]$ of any nonzero polynomial in the
$t_i-t_{i+1}$ with positive coefficients is also nonzero in $\C[t]$. 
\end{proof}

The proof of the corollary above also shows the following.

\begin{proposition}\label{prop:deg-pvA}
Let $\mathcal{A}, \mathcal{B}$ be subsets of $\{1,2,\ldots, n-1\}$. 
Then 
\begin{itemize} 
\item the restriction $p_{v_{\mathcal{A}}}(w_{\mathcal{B}})$ of the Peterson
Schubert class $p_{v_{\mathcal{A}}}$ at any $w_{\mathcal{B}}$ has 
degree $\ell(v_{\mathcal{A}}) = \abs{\mathcal{A}}$ as a polynomial in
$\C[t]$, 
\item $p_{v_{\mathcal{A}}}$ has cohomology degree $2 \abs{\mathcal{A}}$, and
\item $p_{v_{\mathcal{A}}}(w_{\mathcal{B}})$ is non-zero if
$\sigma_{v_{\mathcal{A}}}(w_{\mathcal{B}})$ is non-zero. 
\end{itemize} 
\end{proposition}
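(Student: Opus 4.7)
The plan is to reduce everything to an analysis of Billey's formula (Theorem~\ref{theorem:billey}) applied to $\sigma_{v_{\mathcal{A}}}$, together with the explicit projection map $\pi_{S^1}$ from Proposition~\ref{prop:compute-pw}. The crucial observation is that both the Graham-positivity of Billey's formula and the positivity of $\pi_{S^1}$ are preserved under composition, so no cancellation can destroy non-vanishing.

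First I would invoke Proposition~\ref{prop:compute-pw} to rewrite
\[
p_{v_{\mathcal{A}}}(w_{\mathcal{B}}) \;=\; \pi_{S^1}\bigl(\sigma_{v_{\mathcal{A}}}(w_{\mathcal{B}})\bigr).
\]
By Fact~\ref{fact:vA-length} we have $\ell(v_{\mathcal{A}}) = |\mathcal{A}|$, so Billey's formula expresses $\sigma_{v_{\mathcal{A}}}(w_{\mathcal{B}})$ as a sum of products of exactly $|\mathcal{A}|$ positive roots $r(i,\mathbf{b}) = t_j - t_k$ with $j<k$. In particular $\sigma_{v_{\mathcal{A}}}(w_{\mathcal{B}})$ is a homogeneous polynomial of degree $|\mathcal{A}|$ in $\Sym(\t^*) \cong \C[t_1,\ldots,t_n]$. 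Since $\pi_{S^1}$ sends each generator $t_i$ to a scalar multiple of $t$, the image is homogeneous of degree $|\mathcal{A}|$ in $\C[t]$, giving the first bullet.

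For the second bullet, Schubert classes $\sigma_v \in H^*_T(G/B)$ have cohomology degree $2\ell(v)$, and every map in the diagram~\eqref{eq:comm-diag} used to define $p_{v_{\mathcal{A}}}$ preserves cohomology degree. Thus by Fact~\ref{fact:vA-length}, $p_{v_{\mathcal{A}}}$ has cohomology degree $2\ell(v_{\mathcal{A}}) = 2|\mathcal{A}|$.

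For the third bullet, I would examine each summand of Billey's formula individually. Under $\pi_{S^1}$, the positive root $t_j - t_k$ (with $j<k$) maps to $(k-j)t$, a \emph{strictly positive} integer multiple of $t$. Consequently each summand of $\sigma_{v_{\mathcal{A}}}(w_{\mathcal{B}})$ maps to a strictly positive integer multiple of $t^{|\mathcal{A}|}$. Now the combination of Fact~\ref{fact:billey-positive} (each summand has non-negative integer coefficients in the simple roots) with the positivity of $\pi_{S^1}$ forces $\pi_{S^1}\bigl(\sigma_{v_{\mathcal{A}}}(w_{\mathcal{B}})\bigr)$ to be a non-negative integer multiple of $t^{|\mathcal{A}|}$, with each nonzero Billey summand contributing strictly positively. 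Hence if $\sigma_{v_{\mathcal{A}}}(w_{\mathcal{B}}) \neq 0$ then at least one Billey summand is nonzero, and no cancellation can occur in $\C[t]$, so $p_{v_{\mathcal{A}}}(w_{\mathcal{B}}) \neq 0$. The only subtle point to verify carefully is this non-cancellation step, but it follows immediately from the fact that all contributions have the same (positive) sign.
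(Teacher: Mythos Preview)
Your proof is correct and is essentially the same argument the paper has in mind: the paper offers no separate proof, merely noting that ``the proof of the corollary above also shows the following,'' and that corollary's proof uses exactly the ingredients you invoke---Billey's formula, Fact~\ref{fact:billey-positive} (Graham positivity), and the observation that $\pi_{S^1}$ sends each positive root to a strictly positive multiple of $t$. Your write-up simply spells out the details more fully.
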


We may now prove our first main theorem.

\begin{theorem}\label{theorem:pvA-basis}
Let $Y$ be the Peterson variety of type $A_{n-1}$ with the
natural $S^1$-action defined by~\eqref{eq:def-circle}. For ${\mathcal{A}}
\subseteq \{1,2,\ldots, n-1\}$, let $v_{\mathcal{A}} \in S_n$ be the permutation
given in Definition~\ref{def:vA}, and let $p_{v_{\mathcal{A}}}$ be the
corresponding Peterson Schubert class in $H^*_{S^1}(Y)$. 
The classes $\{p_{v_{\mathcal{A}}}: {\mathcal{A}} \subseteq \{1,2,\ldots, n-1\} \}$ in
$H^*_{S^1}(Y)$ 
\begin{itemize}
\item form an $H^*_{S^1}(\pt)$-module basis for $H^*_{S^1}(Y)$, and 
\item satisfy the upper-triangularity conditions 
\begin{equation}\label{eq:pvA-at-wB}
p_{v_{\mathcal{A}}}(w_{\mathcal{B}}) = 0 \quad \mbox{if } {\mathcal{B}} \not \supseteq {\mathcal{A}},  
\end{equation}
and
\begin{equation}\label{eq:pvA-at-wA-nonzero}
p_{v_{\mathcal{A}}}(w_{\mathcal{A}}) \neq 0.
\end{equation}
\end{itemize}
\end{theorem}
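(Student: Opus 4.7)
The proof splits naturally into establishing the upper-triangularity statements, then upgrading the resulting linear independence to a basis via a graded-dimension comparison.

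For the vanishing~\eqref{eq:pvA-at-wB}, if $\mathcal{B} \not\supseteq \mathcal{A}$ then Lemma~\ref{lemma:containment} gives $v_{\mathcal{A}} \not\leq w_{\mathcal{B}}$ in Bruhat order, so the classical upper-triangularity~\eqref{eq:GmodB-upper-triangular} yields $\sigma_{v_{\mathcal{A}}}(w_{\mathcal{B}}) = 0$ in $H^*_T(G/B)$; applying the projection of Proposition~\ref{prop:compute-pw} then gives $p_{v_{\mathcal{A}}}(w_{\mathcal{B}}) = \pi_{S^1}(0) = 0$. The non-vanishing~\eqref{eq:pvA-at-wA-nonzero} is the content of Corollary~\ref{cor:pvAwA-nonzero}.

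For linear independence over $\C[t]$, I would list the subsets $\mathcal{A} \subseteq \{1,\ldots,n-1\}$ along any linear extension of the containment order (for instance by increasing $|\mathcal{A}|$), and form the matrix $\bigl(p_{v_{\mathcal{A}}}(w_{\mathcal{B}})\bigr)_{\mathcal{A},\mathcal{B}}$. The preceding step shows this matrix is upper triangular with non-zero diagonal entries in $\C[t]$. A standard induction along the ordering---evaluating $\sum_{\mathcal{A}} c_{\mathcal{A}} p_{v_{\mathcal{A}}} = 0$ successively at the points $w_{\mathcal{B}}$, using the injectivity of $\imath^*$ from Theorem~\ref{theorem:injectivity} to test relations via fixed-point restrictions---then forces every $c_{\mathcal{A}}$ to vanish.

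To upgrade linear independence to spanning, I would compare Poincar\'e series. By Proposition~\ref{prop:deg-pvA} the class $p_{v_{\mathcal{A}}}$ is homogeneous of cohomology degree $2|\mathcal{A}|$, so the graded $\C[t]$-span of $\{p_{v_{\mathcal{A}}}\}$ has Hilbert series
\[
\frac{1}{1-q^2}\sum_{\mathcal{A} \subseteq \{1,\ldots,n-1\}} q^{2|\mathcal{A}|} \;=\; \frac{(1+q^2)^{n-1}}{1-q^2}.
\]
On the other hand, Theorem~\ref{theorem:injectivity} gives $H^*_{S^1}(Y) \cong \C[t] \otimes H^*(Y)$ as graded $\C[t]$-modules, so its Hilbert series is $P_Y(q)/(1-q^2)$ where $P_Y$ is the Poincar\'e polynomial of $Y$; the paving by affines from \cite{Tym06}, whose cell at $w_{\mathcal{A}}$ has complex dimension $|\mathcal{A}|$, gives $P_Y(q) = (1+q^2)^{n-1}$. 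Since the graded submodule generated by the $p_{v_{\mathcal{A}}}$ has the same Hilbert series in every degree as the ambient free module in which it sits, the two must coincide. The main technical ingredient to be careful about is the cell-dimension formula for the paving by affines from \cite{Tym06}, needed to pin down $P_Y(q)$; granted this input, the rest is bookkeeping built on top of the upper-triangularity and the positivity of Billey's formula already in hand.
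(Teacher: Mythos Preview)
Your proof is correct and follows essentially the same route as the paper: upper-triangularity via Lemma~\ref{lemma:containment} and~\eqref{eq:GmodB-upper-triangular}, non-vanishing via Corollary~\ref{cor:pvAwA-nonzero}, linear independence by evaluating at a minimal fixed point, and spanning by a graded dimension count against the Betti numbers of $Y$. The only cosmetic difference is that the paper cites \cite{SomTym06} for the Betti numbers and packages the dimension comparison as Proposition~\ref{prop:general-module-gens} in the appendix, whereas you read off $P_Y(q)=(1+q^2)^{n-1}$ from the cell dimensions of the paving in \cite{Tym06} and compare Hilbert series directly---same argument, different bookkeeping.
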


\begin{proof}
  We begin with a proof of the upper-triangularity
  condition~\eqref{eq:pvA-at-wB}. 
  Recall that $v_{\mathcal{A}} \leq w_{\mathcal{B}}$ if and only if
  ${\mathcal{A}} \subseteq {\mathcal{B}}$ by
  Lemma~\ref{lemma:containment}.  The image of zero
  under the map $\pi_{S^1}$ of Proposition~\ref{prop:compute-pw} is still zero, so
  it suffices to show that $\sigma_{v_{\mathcal{A}}}(w_{\mathcal{B}})
  = 0$ if $v_{\mathcal{A}} \not \leq w_{\mathcal{B}}$. This follows from the upper-triangularity of
  equivariant Schubert classes~\eqref{eq:GmodB-upper-triangular} (or can be proven
  directly from Billey's formula). 

  The assertion that $p_{v_{\mathcal{A}}}(w_{\mathcal{A}}) \neq 0$ is
  the content of Corollary~\ref{cor:pvAwA-nonzero}. 

  We now show that assertions~\eqref{eq:pvA-at-wB}
  and~\eqref{eq:pvA-at-wA-nonzero} imply that the
  $\{p_{v_{\mathcal{A}}}\}$, ranging over subsets ${\mathcal{A}}$ of
  $\{1,2,\ldots, n-1\}$, are $H^*_{S^1}(\pt)$-linearly independent.  Suppose $\sum_{\mathcal{A}} c_{\mathcal{A}}
  p_{v_{\mathcal{A}}} = 0 \in H^*_{S^1}(Y)$ for \(c_{\mathcal{A}} \in
  H^*_{S^1}(\pt).\) If any subset ${\mathcal{A}}$ has
  $c_{\mathcal{A}} \neq 0,$ then there must exist a minimal such, say
  ${\mathcal{B}}$.  Evaluating at $w_{\mathcal{B}}$,
  we conclude that 
\begin{equation}\label{eq:localization-at-wB}
\sum_{\mathcal{A}} c_{\mathcal{A}} \cdot p_{v_{\mathcal{A}}}(w_{\mathcal{B}}) =0.
\end{equation}
  By hypothesis on
  ${\mathcal{B}},$ we have $c_{\mathcal{A}} = 0$ for all ${\mathcal{A}}
  \subsetneq {\mathcal{B}}$. On the other hand, by~\eqref{eq:pvA-at-wB} we know
  $p_{v_{\mathcal{A}}}(w_{\mathcal{B}})=0$ for all ${\mathcal{A}} \not
  \subseteq {\mathcal{B}}$.  Hence the equality~\eqref{eq:localization-at-wB} simplifies to
\[
c_{\mathcal{B}} \cdot p_{v_{\mathcal{B}}}(w_{\mathcal{B}})=0.
\]
 From~\eqref{eq:pvA-at-wA-nonzero} and the fact that $H^*_{S^1}(\pt)
 \cong \C[t]$ is an integral domain,  we conclude $c_{\mathcal{B}} = 0$, a
  contradiction. Hence the $\{p_{v_{\mathcal{A}}}\}$ are linearly
  independent over $H^*_{S^1}(\pt)$. 

  Facts~\ref{fact:vA-length} and~\ref{fact:billey-positive} show that for any \(w \in Y^{S^1}\) the degree of
  the polynomial $p_{v_{\mathcal{A}}}(w)$ is
  $\abs{\mathcal{A}}$. The polynomial variable $t$ has
  cohomology degree $2$ so the cohomology degree of
  $p_{v_{\mathcal{A}}}$ in $H^*_{S^1}(Y)$ is $2\abs{\mathcal{A}}$.
  Since the $p_{v_{\mathcal{A}}}$ are enumerated precisely by the
  subsets $\mathcal{A}$ of $\{1,2,\ldots, n-1\}$, 
  we 
  conclude that there are $\binom{n-1}{j}$ distinct Peterson Schubert
  classes $p_{v_{\mathcal{A}}}$ of cohomology degree precisely $2j$. 
A result of Sommers and Tymoczko \cite{SomTym06} states that
$\binom{n-1}{j}$  is precisely the $2j$-th Betti number of $H^*(Y)$. 
Hence by Proposition~\ref{prop:general-module-gens} the $\{p_{v_{\mathcal{A}}}\}$ form an $H^*_{S^1}(\pt)$-module
basis for $H^*_{S^1}(Y)$, as desired. 
\end{proof}

\section{Combinatorial formulas for restrictions of Peterson Schubert
  classes to $S^1$-fixed points}\label{sec:combo}

In this section we explicitly evaluate the restrictions of a Peterson Schubert class
$p_{v_{\mathcal{A}}}$ at certain 
$S^1$-fixed points $w_{\mathcal{B}}$.  This will give a closed-form expression for the values 
$p_{v_{\mathcal{A}}}(w_{\mathcal{B}})$ needed in Section~\ref{sec:Monk}.
We also use these results to show that the module basis $\{p_{v_{\mathcal{A}}}\}$ satisfies the minimality
condition~\eqref{eq:Peterson-minimality}, and is the unique (in an
appropriate sense) set of Peterson Schubert classes in $H^*_{S^1}(Y)$ satisfying both the upper-triangularity and minimality conditions.

Our formulas will arise from a careful analysis of Billey's
formula, although our main interest is not in the
$\sigma_{v_{\mathcal{A}}}(w_{\mathcal{B}})$ but rather their
images $p_{v_{\mathcal{A}}}(w_{\mathcal{B}})$ via the projection
map $\pi_{S^1}$ in~\eqref{eq:def-piS1}.  This motivates us to establish the following terminology.

\begin{definition}
Let $v,w \in S_n$ and let $\sigma_v(w) = \sum r(i_1, \mathbf{b})r(i_2, \mathbf{b}) \cdots
r(i_{\ell(v)}, \mathbf{b})$ be Billey's formula for the restriction.  Using the projection $\pi_{S^1}$ of Proposition \ref{prop:compute-pw}, we refer to the expression
\[p_v(w) = \sum \pi_{S^1}(r(i_1, \mathbf{b})) \pi_{S^1}(r(i_2, \mathbf{b})) \cdots
\pi_{S^1}(r(i_{\ell(v)}, \mathbf{b}))\]
as {\bf Billey's formula for} $p_w$.
\end{definition}

We will proceed by first
explicitly computing the projection to $H^*_{S^1}(Y)$ of each of
the factors $r(i, \mathbf{b})$ in each of the summands of Billey's formula for $\sigma_w$. From this,
we derive concrete, explicit expressions for the terms in Billey's formula for $p_{v_{\mathcal{A}}}(w_{\mathcal{B}})$.

We begin with the
special case when ${\mathcal{A}}$ consists of a single maximal
consecutive string.
Before stating the lemma, we recall that the positive roots of
$G=GL(n,\C)$ have the form 
\[
t_j - t_{k+1}
\]
for \(j<k+1,\) and each such root may be expressed as a sum of
positive simple roots as follows: 
\[
t_j - t_{k+1} = (t_j - t_{j+1}) + (t_{j+1} - t_{j+2}) + \cdots + (t_k-t_{k+1}).
\]
The {\bf length} of the positive root $t_j-t_{k+1}$
is $k-j+1$.  Recall that Proposition \ref{prop:compute-pw} showed that 
\[\pi_{S^1}(t_j - t_{k+1}) = (k+1-j)t.\]

\begin{lemma}\label{lemma:length-of-roots} 
Let ${\mathcal{A}} =[a_1,a_2] \subseteq
  \{1,2,\ldots, n-1\}$ consist of a single maximal consecutive string,
  let $w_{\mathcal{A}}$ be the
  corresponding Weyl group element, and let
 $\mathbf{b} = (b_1, \ldots, b_{\ell(w_{\mathcal{A}})})$ be the reduced
 word decomposition of $w_{\mathcal{A}}$ given
  in~\eqref{eq:wA-reduced-word}. Fix an index $b_m$ for some $1 \leq m
  \leq \ell(w_{\mathcal{A}})$. Then 
\begin{enumerate} 
\item the index $b_m$ equals $i$ for some $a_1 \leq i \leq a_2$, 
\item $r(m, \mathbf{b})$ is a positive root of length $i-a_1+1$, and 
\item \(\pi_{S^1}(r(m, \mathbf{b})) = (i- a_1+1) t. \) 
\end{enumerate} 
In particular, the projection $\pi_{S^1}(r(m, \mathbf{b}))$  and 
the length of the positive root $r(m, \mathbf{b})$
depend only on the value of the index
$b_m$ and not on its position $m$ in $\mathbf{b}$. 

\end{lemma}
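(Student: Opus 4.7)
Parts (1) and (3) will fall out once Part (2) is in hand: Part (1) is immediate by inspection of the reduced word~\eqref{eq:wjk-reduced-word}, whose simple transpositions all have index in $[a_1, a_2]$, and Part (3) then follows from Part (2) together with the identity $\pi_{S^1}(t_\alpha - t_\beta) = (\beta - \alpha)t$ for $\alpha < \beta$ (a direct consequence of Proposition~\ref{prop:compute-pw}). The real content of the lemma is therefore the length computation in Part (2).

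My approach to Part (2) is to exploit the block structure of the reduced word~\eqref{eq:wjk-reduced-word}. The plan is to write this word as a concatenation, indexed by $k = 0, 1, \ldots, a_2 - a_1$, of ``blocks'' $B_k := s_{a_1} s_{a_1+1} \cdots s_{a_2 - k}$, and to set $u_k := B_0 B_1 \cdots B_{k-1}$, with $u_0 = 1$. If position $m$ sits in block $B_k$ with $b_m = i = a_1 + j$ (so $0 \leq j \leq a_2 - a_1 - k$), then the partial product preceding position $m$ factors as $u_k \cdot s_{a_1} s_{a_1+1} \cdots s_{a_1+j-1}$. A direct induction on $j$ gives
\[
s_{a_1} s_{a_1+1} \cdots s_{a_1+j-1}(t_{a_1+j} - t_{a_1+j+1}) = t_{a_1} - t_{a_1+j+1},
\]
so the problem reduces to evaluating $u_k(t_{a_1} - t_{a_1+j+1})$.

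To finish I need only the action of $u_k$ on the two indices $a_1$ and $a_1 + j + 1$. I plan to prove by induction on $k$ the following claim: for every integer $p$ with $0 \leq p \leq a_2 - a_1 + 1 - k$, one has $u_k(a_1 + p) = a_1 + p + k$. The base $k = 0$ is trivial. In the inductive step, the key observation is that $B_{k-1} = s_{a_1} s_{a_1+1} \cdots s_{a_2 - k + 1}$ acts as the cyclic rotation $(a_1 \ a_1 + 1 \ \cdots \ a_2 - k + 2)$ on its support, so $B_{k-1}(a_1 + p) = a_1 + p + 1$ whenever $0 \leq p \leq a_2 - a_1 + 1 - k$; applying the inductive hypothesis then gives $u_k(a_1 + p) = u_{k-1}(a_1 + p + 1) = a_1 + p + k$.

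Because $j + 1 \leq a_2 - a_1 - k + 1$, both $p = 0$ and $p = j + 1$ lie in the range of the claim, so substitution yields
\[
r(m, \mathbf{b}) = u_k(t_{a_1} - t_{a_1+j+1}) = t_{a_1+k} - t_{a_1+j+1+k},
\]
a positive root of length $j + 1 = b_m - a_1 + 1 = i - a_1 + 1$, as required. The main obstacle will be bookkeeping: setting up the block indexing and proving the auxiliary claim about the action of $u_k$. Once that is in place, the rest is routine, and the lemma's essential feature — that the contribution of $r(m, \mathbf{b})$ sees only the value $b_m$ and not its position $m$ — becomes transparent, since the ``shift-by-$k$'' behavior of $u_k$ on the relevant indices erases any dependence on the position within the block.
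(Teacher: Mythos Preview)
Your proof is correct and rests on the same two ingredients as the paper's: the within-block identity
\[
s_{a_1}\cdots s_{a_1+j-1}(t_{a_1+j}-t_{a_1+j+1})=t_{a_1}-t_{a_1+j+1},
\]
and the observation that each full block $B_\ell=s_{a_1}\cdots s_{a_2-\ell}$ acts as a cyclic shift on its support, hence preserves root length. The only difference is organizational. The paper inducts on the length of the string $[a_1,a_2]$ via the decomposition $w_{[a_1,a_2]}=B_0\,w_{[a_1,a_2-1]}$: positions in $B_0$ are handled by the within-block identity, and positions beyond $B_0$ are handled by applying the inductive hypothesis for $[a_1,a_2-1]$ and then shifting once by the cycle $B_0$. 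You instead fix the block index $k$, factor the prefix as $u_k=B_0\cdots B_{k-1}$ times a partial block, and prove the closed form $u_k(a_1+p)=a_1+p+k$ by induction on $k$. Your route has the minor advantage of producing the explicit root $r(m,\mathbf{b})=t_{a_1+k}-t_{a_1+j+1+k}$ in one stroke, whereas the paper's recursion tracks only the length (though the root is easily recovered there too). In substance the two arguments are the same.
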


\begin{proof}
  The first claim is immediate from the fact that $\mathcal{A} = [a_1, a_2]$ and the definition of $w_{\mathcal{A}}$.  We prove the latter two claims by induction on the length of the consecutive string. The base case is when ${\mathcal{A}} =[a] = \{a\}$ is a singleton set, $\ell(w_{\mathcal{A}})=1$, and $w_{\mathcal{A}} = s_{a}$ is a single simple transposition. In this case the only possible choice of $b_m$ is $m=1$ and $b_m = a$. Moreover $r(m=1, \{a\})$ is
 $t_a-t_{a+1}$, which is a positive root of length $1$. By Proposition \ref{prop:compute-pw}, the root $t_a - t_{a+1}$ maps to $t$, so 
\[
\pi_{S^1}(r(1, \{a\})) = (a-a+1)t = 1 \cdot t 
\]
as desired. 

Now suppose that the consecutive string is $[a_1,a_2]$ with $a_2>a_1$ and that the
 lemma holds for the
consecutive string $[a_1,a_2-1]$. By definition of $w_{[a_1,a_2]}$
and $w_{[a_1,a_2-1]}$ in~\eqref{eq:wjk-reduced-word}, we have 
\begin{equation}\label{eq:wjk-prod}
w_{[a_1,a_2]} = s_{a_1} s_{a_1+1} \cdots s_{a_2} w_{[a_1,a_2-1]}.
\end{equation}
Let $\mathbf{b}_{[a_1,a_2-1]}$ and $\mathbf{b}_{[a_1,a_2]}$ be the reduced word decompositions of $w_{[a_1, a_2-1]}$ and $w_{[a_1,a_2]},$ respectively, given by~\eqref{eq:wjk-reduced-word}. Then 
\begin{equation}\label{eq:Ijk-union}
\mathbf{b}_{[a_1,a_2]} = \{b_1 = a_1, b_2 = a_1+1, \cdots, b_{a_2-a_1+1}=a_2\} \cup
\mathbf{b}_{[a_1,a_2-1]}
\end{equation}
where the union is of \emph{ordered} sequences.
We now prove the lemma holds for the first $a_2-a_1+1$ indices in $\mathbf{b}_{[a_1,a_2]}$,
i.e. for $b_m$ when $1 \leq m \leq a_2-a_1+1$. 
Direct calculation shows that for any
such $b_m$ we have 
\[
r(m,  \mathbf{b}_{[a_1,a_2]}) = s_{a_1} s_{a_1+1} \cdots s_{b_m-1} (t_{b_m} - t_{b_m+1}) = t_{a_1} - t_{b_m+1} 
\]
which by definition of $\pi_{S^1}$ projects to 
\[
\pi_{S^1}(r(m,  \mathbf{b}_{[a_1,a_2]})) = (b_m - a_1+1) t. 
\]
This proves the result for the first $a_2-a_1+1$ indices appearing in
$\mathbf{b}_{[a_1,a_2]}$. 

We now prove the result for indices $b_m \in  \mathbf{b}_{[a_1,a_2]}$ with
$m > a_2-a_1+1$. By observations~\eqref{eq:wjk-prod}
and~\eqref{eq:Ijk-union}, the $m$-th element $b_m = i$ in $\mathbf{b}_{[a_1,a_2]}$ for
$m > a_2-a_1+1$ is the $(m-(a_2-a_1+1))$-th element in $\mathbf{b}_{[a_1,a_2-1]}$ and
\[r(m, \mathbf{b}_{[a_1,a_2]}) = s_{a_1} s_{a_1+1} \cdots s_{a_2} r({m-(a_2-a_1+1)}, \mathbf{b}_{[a_1,a_2-1]}).\] 
By the inductive assumption
$r({m-(a_2-a_1+1)}, \mathbf{b}_{[a_1, a_2-1]})$ is a positive root $t_j - t_{j+i-a_1+1}$ of length
$i-a_1+1$. Note that $i$, $j$, and $j+i-a_1$ are all in $[a_1, a_2-1]$
by definition of $w_{[a_1,a_2-1]}$. A computation
shows 
\[
r(m,  \mathbf{b}_{[a_1,a_2]}) = s_{a_1} s_{a_1+1} \cdots s_{a_2} (t_j - t_{j+i-a_1+1}) = t_{j+1} - t_{j+i-a_1+2}.
\]
So $r(m,  \mathbf{b}_{[a_1,a_2]})$ is also a positive root of
length $i-a_1+1$, and this length depends only on the value $i$ of the
index $b_m$ and not on its location in $\mathbf{b}_{[a_1,a_2]}$. By definition
of $\pi_{S^1}$ we see $\pi_{S^1}(r(m,\mathbf{b}_{[a_1,a_2]})) =
(i-a_1+1)t$. 
\end{proof}

\begin{example}

Let $w = w_{[1,3]} = s_1 s_2 s_3 s_1 s_2 s_1$.  Then we have

 \[ \begin{array}{c|c|c|c|c|c|c}
 j & 1 & 2 & 3 & 4 & 5 & 6 \\
 \cline{1-7} \vspace{0.25em} r(j,\mathbf{b}_{[1,3]}) & t_1-t_2 & t_1 - t_3 & t_1 - t_4 & t_2 - t_3 &
 t_2 - t_4 & t_3 - t_4 \\
 \cline{1-7} \pi_{S^1}(r(j,\mathbf{b}_{[1,3]})) & t & 2t & 3t & t & 2t & t \end{array}
\]

\end{example}

The above lemma says the maximal consecutive substring containing $i
\in {\mathcal{A}}$ determines the corresponding factor in each summand
of Billey's formula.  This motivates the following definitions.

\begin{definition}\label{defn:head}
Fix ${\mathcal{A}} \subseteq \{1,2,\ldots,n-1\}$.  Let
$\mathcal{H}_{\mathcal{A}}: \mathcal{A} 
\to \mathcal{A}$ be the function such that 
\[\head{{\mathcal{A}}}{j} = \textup{the maximal element (``the head'') in the maximal consecutive substring of ${\mathcal{A}}$ containing $j$}.\]
\end{definition}

\begin{definition}\label{defn:tail}
Fix ${\mathcal{A}} \subseteq \{1,2,\ldots,n-1\}$.  Let $\mathcal{T}_{\mathcal{A}}: \mathcal{A}
\rightarrow \mathcal{A}$ be the function such that
\[\toe{{\mathcal{A}}}{j} = 
\textup{the minimal element (``the tail'') in the maximal consecutive substring of ${\mathcal{A}}$ containing } j.\]
\end{definition}

\begin{example}
For example if ${\mathcal{A}} = \{1,2,3,5,6\}$ then
\[\begin{tabular}{c|c|c|c|c|c}
j & 1 & 2 & 3  & 5 & 6 \\
\cline{1-6} $\toe{{\mathcal{A}}}{j}$ & 1 & 1 & 1 &  5 & 5\\
\cline{1-6} $\head{{\mathcal{A}}}{j}$ & 3 & 3 & 3 &  6 & 6\\
\end{tabular}\]
\end{example}

Using these functions, we may describe the
$p_{v_{\mathcal{A}}}(w_{\mathcal{B}})$ concretely. Building on the
previous lemma, we obtain the following expression for the
summands in Billey's formula for $p_{v_{\mathcal{A}}}$. 

\begin{lemma}\label{lemma:summands-equal}
  Let ${\mathcal{A}}, {\mathcal{B}} \subseteq \{1, \ldots,
  n-1\}$. If $\mathcal{A} \subseteq \mathcal{B}$ then each
  summand in Billey's formula for
  $p_{v_{\mathcal{A}}}(w_{\mathcal{B}})$ is 
\begin{equation}\label{eq:pvAwB-summand-formula}
\left( \prod_{j \in {\mathcal{A}}} (j-\toe{{\mathcal{B}}}{j}+1)\right) t^{|{\mathcal{A}}|}.
\end{equation}
In particular, all summands in
Billey's formula for $p_{v_{\mathcal{A}}}(w_{\mathcal{B}})$ are equal. 
\end{lemma}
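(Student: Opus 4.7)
The plan is to reduce the computation of each summand in Billey's formula for $p_{v_{\mathcal{A}}}(w_{\mathcal{B}})$ to a product of factors, one for each $j \in \mathcal{A}$, by exploiting the block structure of the reduced-word decomposition~\eqref{eq:wA-reduced-word} of $w_{\mathcal{B}}$ together with Lemma~\ref{lemma:length-of-roots}.

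First I would observe that since $\mathcal{A} \subseteq \mathcal{B}$, Lemma~\ref{lemma:containment} guarantees $v_{\mathcal{A}} \leq w_{\mathcal{B}}$, so Billey's formula has at least one summand. Next, since $\ell(v_{\mathcal{A}}) = \abs{\mathcal{A}}$ by Fact~\ref{fact:vA-length} and reduced words in any Coxeter group use each simple reflection the same number of times, every reduced word for $v_{\mathcal{A}}$ uses each $s_j$ with $j \in \mathcal{A}$ exactly once. Hence each summand in Billey's formula arises by selecting, for every $j \in \mathcal{A}$, exactly one occurrence of $s_j$ inside the reduced word $\mathbf{b}_{\mathcal{B}}$ of $w_{\mathcal{B}}$, subject to the order constraint that the chosen sequence reads as a reduced word for $v_{\mathcal{A}}$.

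The core step is then to compute $\pi_{S^1}(r(m, \mathbf{b}_{\mathcal{B}}))$ for any position $m$ with $b_m = j \in \mathcal{A}$. Write $\mathcal{B} = [a_1,a_2] \cup [a_3,a_4] \cup \cdots \cup [a_{m-1},a_m]$ and let $[\toe{\mathcal{B}}{j}, \head{\mathcal{B}}{j}]$ denote the block containing $j$. The letters of $\mathbf{b}_{\mathcal{B}}$ lying strictly to the left of the chosen position $m$ split into two groups: those from blocks other than $[\toe{\mathcal{B}}{j}, \head{\mathcal{B}}{j}]$, and those from the same block that precede $m$. Since distinct maximal consecutive substrings of $\mathcal{B}$ are separated by a gap, any index $i$ from another block satisfies $\abs{i-j} \geq 2$, so $s_i$ fixes $t_j - t_{j+1}$. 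Thus only the letters from the same block contribute, and we obtain
\[
r(m, \mathbf{b}_{\mathcal{B}}) \;=\; r(m', \mathbf{b}_{[\toe{\mathcal{B}}{j}, \head{\mathcal{B}}{j}]}),
\]
where $m'$ is the position of $b_m$ within the reduced word of its block. Applying Lemma~\ref{lemma:length-of-roots} to this block yields
\[
\pi_{S^1}(r(m, \mathbf{b}_{\mathcal{B}})) \;=\; (j - \toe{\mathcal{B}}{j} + 1)\, t,
\]
a value which depends only on $j$ and not on which occurrence of $s_j$ was selected.

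Finally, multiplying these single-factor contributions over the $\abs{\mathcal{A}}$ choices (one for each $j \in \mathcal{A}$) produces the claimed expression $\left(\prod_{j \in \mathcal{A}} (j - \toe{\mathcal{B}}{j} + 1)\right) t^{\abs{\mathcal{A}}}$, and since the formula is the same for every valid choice of subword, all summands in Billey's formula for $p_{v_{\mathcal{A}}}(w_{\mathcal{B}})$ are equal. The main obstacle I anticipate is the block-separation step: one must carefully verify that the indices in different maximal consecutive substrings of $\mathcal{B}$ differ by at least $2$ (a direct consequence of maximality, since adjacent integers would merge the substrings), and that this gap is exactly what is needed to make the ambient simple transpositions act trivially on $t_j - t_{j+1}$. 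Once this is in place, everything reduces cleanly to Lemma~\ref{lemma:length-of-roots}.
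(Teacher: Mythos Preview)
Your proposal is correct and follows essentially the same approach as the paper's proof: identify that any reduced subword of $w_{\mathcal{B}}$ equal to $v_{\mathcal{A}}$ uses each $s_j$ with $j \in \mathcal{A}$ exactly once, and then invoke Lemma~\ref{lemma:length-of-roots} to evaluate each factor $\pi_{S^1}(r(m,\mathbf{b}))$ as $(j-\toe{\mathcal{B}}{j}+1)t$. Your block-separation argument reducing from a general $\mathcal{B}$ to a single maximal consecutive string is in fact more explicit than the paper, which cites Lemma~\ref{lemma:length-of-roots} directly without spelling out why the letters from other blocks of $\mathbf{b}_{\mathcal{B}}$ can be ignored.
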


\begin{proof}

  Let
  $\mathbf{b} = (b_1, b_2, \ldots, b_{\ell(w_{\mathcal{B}})})$ be the reduced-word decomposition for
  $w_{\mathcal{B}}$ given in~\eqref{eq:wA-reduced-word}. Let $J =
  \{i_1, i_2, \ldots,i_{|J|}\} \subseteq [1,\ell(w_{\mathcal{B}})]$ be
  a choice of subset of indices of $\mathbf{b}$ so that $s_{b_{i_1}}
  s_{b_{i_2}} \cdots s_{b_{i_{|J|}}} = v_{\mathcal{A}}$.
Then there is an equality of sets $\{
  b_{i_1}, b_{i_2}, \ldots, b_{i_{|J|}} \} = {\mathcal{A}}$ by
  Facts~\ref{fact:vA-reduced-word-unique} and~\ref{fact:vA-product}. 
The image under $\pi_{S^1}$ of the
  summand in Billey's formula for 
  $\sigma_{v_{\mathcal{A}}}(w_{\mathcal{B}})$ corresponding to the
  subword specified by $J$ is
  a product of terms $\pi_{S^1}(r(j, \mathbf{b}))$ for $j \in J$.
  Lemma~\ref{lemma:length-of-roots} implies that this product is
\[
\prod_{j \in {\mathcal{A}}} \left( (j - \toe{{\mathcal{B}}}{j} + 1) t \right) =
\left( \prod_{j \in {\mathcal{A}}} (j - \toe{{\mathcal{B}}}{j}+1) \right) t^{|{\mathcal{A}}|}
\]
as desired. 
\end{proof}

\begin{example}\label{ex:Billey-summand}
Suppose $n \geq 6, \mathcal{A}=\{1,2,3,5,6\}$ and $\mathcal{B}= \{1,2,3,4,5,6\}$.  
\begin{itemize}
\item Each summand in Billey's formula for $p_{v_{\mathcal{A}}}(w_{\mathcal{A}})$ is $(3!) \cdot (2!) t^5 = 12t^5$.  
\item Each summand in Billey's formula for $p_{v_{\mathcal{A}}}(w_{\mathcal{B}})$ is $(3!) \cdot (6 \cdot 5) t^5 = 180t^5$.  
\item Each summand in Billey's formula for $p_{v_{\mathcal{B}}}(w_{\mathcal{B}})$ is $(6!) t^6$.
\end{itemize}
\end{example}

Since all the summands in Billey's formula for $p_{v_{\mathcal{A}}}(w_{\mathcal{B}})$
are equal, we conclude the following. 

\begin{proposition}\label{prop:number-of-ways}
Let ${\mathcal{A}} \subseteq {\mathcal{B}} \subseteq \{1,2,\ldots,
n-1\}$. Let $p_{v_{\mathcal{A}}}$ be the Peterson Schubert class
corresponding to $v_{\mathcal{A}}$ and $w_{\mathcal{B}}$ the
permutation corresponding to $\mathcal{B}$. Then 
\begin{equation}\label{eq:number-of-ways}
p_{v_{\mathcal{A}}}(w_{\mathcal{B}}) = \left( \text{number of distinct subwords of $w_{\mathcal{B}}$ equal to $v_{\mathcal{A}}$} \right) \left( \prod_{j \in
    {\mathcal{A}}} (j - \toe{{\mathcal{B}}}{j}+1) \right) t^{|{\mathcal{A}}|}.
\end{equation}
\end{proposition}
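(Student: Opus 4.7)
The proposition follows by assembling the pieces already developed in this section. The plan is to start from Billey's formula for $\sigma_{v_{\mathcal{A}}}(w_{\mathcal{B}})$, apply the projection $\pi_{S^1}$ to obtain Billey's formula for $p_{v_{\mathcal{A}}}(w_{\mathcal{B}})$, and then observe via Lemma~\ref{lemma:summands-equal} that every summand is equal to the same constant, so the total is simply the number of summands times that common value.

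More precisely, I would proceed as follows. First, fix the reduced-word decomposition $\mathbf{b}$ of $w_{\mathcal{B}}$ given by~\eqref{eq:wA-reduced-word}. Second, by Proposition~\ref{prop:compute-pw}, we have
\[
p_{v_{\mathcal{A}}}(w_{\mathcal{B}}) = \pi_{S^1}\bigl(\sigma_{v_{\mathcal{A}}}(w_{\mathcal{B}})\bigr).
\]
Third, apply Billey's formula (Theorem~\ref{theorem:billey}) to expand $\sigma_{v_{\mathcal{A}}}(w_{\mathcal{B}})$ as a sum indexed by reduced subwords of $\mathbf{b}$ whose product equals $v_{\mathcal{A}}$; linearity of $\pi_{S^1}$ then produces the corresponding sum expression for $p_{v_{\mathcal{A}}}(w_{\mathcal{B}})$. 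Fourth, invoke Lemma~\ref{lemma:summands-equal}, which asserts that, because $\mathcal{A} \subseteq \mathcal{B}$, each of these summands equals the single quantity
\[
\left( \prod_{j \in \mathcal{A}} (j - \toe{\mathcal{B}}{j}+1) \right) t^{|\mathcal{A}|}.
\]
Finally, since every summand contributes this same value, the total equals the number of reduced subwords of $\mathbf{b}$ equal to $v_{\mathcal{A}}$ multiplied by this common value, which is precisely the right-hand side of~\eqref{eq:number-of-ways}.

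There is essentially no obstacle here: the work has already been done in Lemma~\ref{lemma:length-of-roots} and Lemma~\ref{lemma:summands-equal}, which together ensure that the individual Billey summands depend only on the underlying set $\mathcal{A}$ and the maximal consecutive string structure of $\mathcal{B}$, and not on the particular choice of subword. The only minor point worth noting is that the count of ``distinct subwords of $w_{\mathcal{B}}$ equal to $v_{\mathcal{A}}$'' must be interpreted as the count of index subsets $J \subseteq \{1, \ldots, \ell(w_{\mathcal{B}})\}$ for which the associated subword of $\mathbf{b}$ is a reduced expression for $v_{\mathcal{A}}$, which is exactly the indexing set of Billey's sum.
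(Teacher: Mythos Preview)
Your proposal is correct and follows essentially the same approach as the paper: the proposition is stated as an immediate consequence of Lemma~\ref{lemma:summands-equal}, since once all summands in Billey's formula for $p_{v_{\mathcal{A}}}(w_{\mathcal{B}})$ are known to be equal, the total is simply the number of summands times their common value. Your write-up is more explicit about invoking Proposition~\ref{prop:compute-pw} and Theorem~\ref{theorem:billey}, but the argument is the same.
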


\begin{example}\label{ex:Billey-sum} 
Continuing the previous example, suppose $n \geq 7, \mathcal{A}=\{1,2,3,5,6\}$ and $\mathcal{B}= \{1,2,3,4,5,6\}$.  The reader can check that 
\begin{itemize}
\item $p_{v_{\mathcal{A}}}(w_{\mathcal{A}}) = (3!) \cdot (2!) t^5 = 12t^5$.  
\item $p_{v_{\mathcal{A}}}(w_{\mathcal{B}}) = \binom{6}{3} \cdot (3!) \cdot (6 \cdot 5) t^5 = 3600t^5$.  
\item $p_{v_{\mathcal{B}}}(w_{\mathcal{B}}) = (6!) t^6$.
\end{itemize}
\end{example}

\begin{remark}
  In Section~\ref{sec:Monk}, we give explicit formulas for
  counting the number of ways to find $v_{\mathcal{A}}$ in $w_{\mathcal{B}}$ for
  special cases of ${\mathcal{B}}$ and ${\mathcal{A}}$ relevant for the equivariant Chevalley-Monk
  formula. 
\end{remark}

We can now give an explicit combinatorial formula for the value
of $p_{v_{\mathcal{A}}}$ at the fixed point $w_{\mathcal{A}}$.

\begin{corollary}\label{cor:pvAwA}
\begin{equation}
p_{v_{\mathcal{A}}}(w_{\mathcal{A}}) = \prod_{i \in {\mathcal{A}}} (i-\toe{{\mathcal{A}}}{i}+1) t^{\ell(v_{\mathcal{A}})}. 
\end{equation}
\end{corollary}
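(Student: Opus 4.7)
The plan is to specialize Proposition~\ref{prop:number-of-ways} to the case $\mathcal{B} = \mathcal{A}$ and then apply the counting fact already recorded in Fact~\ref{fact:unique-vA-in-wA} together with the length identity in Fact~\ref{fact:vA-length}. Setting $\mathcal{B} = \mathcal{A}$ in equation~\eqref{eq:number-of-ways} gives
\[
p_{v_{\mathcal{A}}}(w_{\mathcal{A}}) = N \cdot \left( \prod_{i \in \mathcal{A}} (i - \toe{\mathcal{A}}{i} + 1) \right) t^{|\mathcal{A}|},
\]
where $N$ is the number of distinct reduced subwords of the fixed reduced-word decomposition of $w_{\mathcal{A}}$ in~\eqref{eq:wA-reduced-word} that spell out $v_{\mathcal{A}}$.

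First I would invoke Fact~\ref{fact:unique-vA-in-wA}, which asserts that there is exactly one such subword, so $N = 1$. Next I would replace the exponent $|\mathcal{A}|$ by $\ell(v_{\mathcal{A}})$, which is justified by Fact~\ref{fact:vA-length}. Substituting these two identifications into the displayed equation yields
\[
p_{v_{\mathcal{A}}}(w_{\mathcal{A}}) = \prod_{i \in \mathcal{A}} (i - \toe{\mathcal{A}}{i} + 1)\, t^{\ell(v_{\mathcal{A}})},
\]
which is exactly the claim.

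There is essentially no obstacle here, since both main ingredients have been established earlier: Proposition~\ref{prop:number-of-ways} packages Billey's formula after projection via $\pi_{S^1}$ together with the observation from Lemma~\ref{lemma:summands-equal} that all summands in Billey's formula for $p_{v_{\mathcal{A}}}(w_{\mathcal{B}})$ are equal, while Fact~\ref{fact:unique-vA-in-wA} collapses the sum to a single term in the present special case. The only care needed is verifying that the hypothesis $\mathcal{A} \subseteq \mathcal{B}$ of Proposition~\ref{prop:number-of-ways} holds, which is trivial when $\mathcal{B} = \mathcal{A}$.
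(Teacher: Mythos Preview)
Your proof is correct and follows essentially the same approach as the paper: specialize Proposition~\ref{prop:number-of-ways} to $\mathcal{B}=\mathcal{A}$ and invoke Fact~\ref{fact:unique-vA-in-wA} to reduce the count of subwords to one. The only addition is that you make explicit the use of Fact~\ref{fact:vA-length} to rewrite the exponent $|\mathcal{A}|$ as $\ell(v_{\mathcal{A}})$, which the paper leaves implicit.
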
 

\begin{proof}
  We observed in Fact~\ref{fact:unique-vA-in-wA} that exactly one
  subword of $w_{\mathcal{A}}$ is a reduced word decomposition of
  $v_{\mathcal{A}}$. 
 The desired result is now a corollary of the previous proposition.
\end{proof}

Next we show that the Peterson Schubert classes $\{p_{v_{\mathcal{A}}}\}$ satisfy the minimality condition~\eqref{eq:Peterson-minimality}.

\begin{proposition}\label{prop:minimality}
Let 
 $p_w$ be a Peterson Schubert class for $w \in S_n$ and suppose that
 $p_w$ satisfies 
\[
p_w(w_{\mathcal{B}}) = 0 \quad \mbox{for all} \; \; w_{\mathcal{B}} \not \geq w_{\mathcal{A}},
\]
and 
\[
p_w(w_{\mathcal{A}}) \neq 0.
\]
Then 
\[
p_{v_{\mathcal{A}}}(w_{\mathcal{A}}) \hspace{0.5em} \vert \hspace{0.5em} p_w(w_{\mathcal{A}}) \quad \mbox{in} \quad H^*_{S^1}(\pt).
\]
\end{proposition}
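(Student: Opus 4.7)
The plan is to expand $p_w$ in the $H^*_{S^1}(\pt)$-module basis $\{p_{v_{\mathcal{C}}}\}$ built in Theorem~\ref{theorem:pvA-basis}, then use the assumed vanishing of $p_w$ together with the upper-triangularity~\eqref{eq:pvA-at-wB} of the basis to collapse the expansion down to a single term at $w_{\mathcal{A}}$. The proof is a direct upper-triangular linear-algebra argument in $H^*_{S^1}(Y)$, parallel in spirit to the linear-independence portion of the proof of Theorem~\ref{theorem:pvA-basis}.

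First, I would translate the Bruhat hypothesis into a set-containment condition. Because $v_{\mathcal{A}}$ appears as a subword of $w_{\mathcal{A}}$ (Fact~\ref{fact:unique-vA-in-wA}), we have $v_{\mathcal{A}} \leq w_{\mathcal{A}}$, and so any $w_{\mathcal{B}} \geq w_{\mathcal{A}}$ automatically satisfies $w_{\mathcal{B}} \geq v_{\mathcal{A}}$. Lemma~\ref{lemma:containment} then gives $\mathcal{A} \subseteq \mathcal{B}$. Contrapositively, $\mathcal{B} \not\supseteq \mathcal{A}$ forces $w_{\mathcal{B}} \not\geq w_{\mathcal{A}}$, so the hypothesis on $p_w$ implies $p_w(w_{\mathcal{B}}) = 0$ for every $\mathcal{B} \not\supseteq \mathcal{A}$.

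Next, write $p_w = \sum_{\mathcal{C}} c_{\mathcal{C}} \, p_{v_{\mathcal{C}}}$ with $c_{\mathcal{C}} \in H^*_{S^1}(\pt) \cong \C[t]$, and claim $c_{\mathcal{C}} = 0$ whenever $\mathcal{C} \not\supseteq \mathcal{A}$. I would prove this by induction on $|\mathcal{C}|$. Fix $\mathcal{C}_0 \not\supseteq \mathcal{A}$ and evaluate the expansion at $w_{\mathcal{C}_0}$. By~\eqref{eq:pvA-at-wB} only indices $\mathcal{C} \subseteq \mathcal{C}_0$ contribute; since $\mathcal{C}_0 \not\supseteq \mathcal{A}$, no such $\mathcal{C}$ can contain $\mathcal{A}$, so every surviving index also satisfies $\mathcal{C} \not\supseteq \mathcal{A}$. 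By the inductive hypothesis all strictly smaller such $c_{\mathcal{C}}$ vanish, leaving
\[
p_w(w_{\mathcal{C}_0}) = c_{\mathcal{C}_0}\, p_{v_{\mathcal{C}_0}}(w_{\mathcal{C}_0}).
\]
The left-hand side is zero by the previous paragraph, the factor $p_{v_{\mathcal{C}_0}}(w_{\mathcal{C}_0})$ is nonzero by~\eqref{eq:pvA-at-wA-nonzero}, and $\C[t]$ is an integral domain, so $c_{\mathcal{C}_0} = 0$.

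Finally, the reduced expansion $p_w = \sum_{\mathcal{C} \supseteq \mathcal{A}} c_{\mathcal{C}} \, p_{v_{\mathcal{C}}}$ evaluated at $w_{\mathcal{A}}$ gives, via~\eqref{eq:pvA-at-wB}, contributions only from $\mathcal{C} \subseteq \mathcal{A}$; combined with $\mathcal{C} \supseteq \mathcal{A}$ this forces $\mathcal{C} = \mathcal{A}$. Hence
\[
p_w(w_{\mathcal{A}}) = c_{\mathcal{A}} \cdot p_{v_{\mathcal{A}}}(w_{\mathcal{A}}) \quad \text{with } c_{\mathcal{A}} \in \C[t],
\]
which is exactly the claimed divisibility. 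The only mildly subtle point is the Bruhat-to-containment translation in the first step, but it follows cleanly from Lemma~\ref{lemma:containment} via the intermediate element $v_{\mathcal{A}}$, so I do not anticipate a genuine obstacle.
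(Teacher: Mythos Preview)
Your proof is correct, but it takes a genuinely different route from the paper's argument. The paper works directly with Billey's formula: it first shows, from the hypotheses on $p_w$, that $s_i \leq w$ if and only if $i \in \mathcal{A}$ (using that $p_w(w_{\mathcal{A}}) \neq 0$ forces $w$ to be a subword of $w_{\mathcal{A}}$, while $p_w(w_{\mathcal{C}}) = 0$ for proper $\mathcal{C} \subsetneq \mathcal{A}$ forces every $s_i$ with $i \in \mathcal{A}$ to appear in $w$). It then invokes Lemma~\ref{lemma:length-of-roots} to see that each summand in Billey's formula for $p_w(w_{\mathcal{A}})$ is divisible by $\prod_{i \in \mathcal{A}}(i - \toe{\mathcal{A}}{i}+1)t$, which by Corollary~\ref{cor:pvAwA} is exactly $p_{v_{\mathcal{A}}}(w_{\mathcal{A}})$.

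Your argument instead stays at the level of the module basis from Theorem~\ref{theorem:pvA-basis} and never touches Billey's formula. This is cleaner and more portable: it would go through verbatim in any setting where one has an upper-triangular basis indexed by a poset compatible with the vanishing hypothesis, whereas the paper's argument uses the specific combinatorics of the reduced-word decomposition of $w_{\mathcal{A}}$. On the other hand, the paper's approach extracts the extra structural fact that the support of $w$ (the set of simple reflections below it) is exactly $\mathcal{A}$, which is of independent interest and is reused later in Proposition~\ref{prop:preimages-pvA}. Your Bruhat-to-containment reduction via $v_{\mathcal{A}} \leq w_{\mathcal{A}}$ and Lemma~\ref{lemma:containment} is perfectly valid and is the only place a Peterson-specific fact enters your argument.
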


\begin{proof}
Let $w \in S_n$ as above. 
We first claim that $s_i \leq w$ if and only if $i \in
\mathcal{A}$. To see this, observe that if $p_w(w_{\mathcal{A}}) \neq
0$, then by Billey's formula, any reduced word for $w_{\mathcal{A}}$
contains a subword which equals $w$. In particular, if $s_i \leq w$
then $s_i$ also appears in every reduced word for
$w_{\mathcal{A}}$. Thus \(i \in A.\) 

To show the converse, we argue by contradiction. Suppose there exists
\(i \in \mathcal{A}\) with \(s_i \not \leq w.\) By the above argument,
this implies that there exists a proper subset ${\mathcal C}
\subsetneq {\mathcal{A}}$ such that $w$ is generated by $\{s_i: i \in
{\mathcal C} \subsetneq {\mathcal{A}}\}$. Denote this subgroup by $S_{\mathcal C}$.
Since $w_{\mathcal C}$ is by definition the longest word in
$S_{\mathcal C}$ and $w \in S_{\mathcal C}$, it follows that $w \leq
w_{\mathcal C}$.  Billey's formula implies $p_w(w_{\mathcal C}) \neq
0$, but $w_{\mathcal C} \not \geq w_{\mathcal{A}}$ since ${\mathcal C}
\subsetneq {\mathcal{A}}$, contradicting the upper-triangularity
assumption on $p_w$. Hence if \(i \in \mathcal{A}\) then
\(s_i \leq w\).

Now let $\mathbf{b}$ be the reduced word decomposition for
$w_{\mathcal{A}}$ given in~\eqref{eq:wA-reduced-word}.  Lemma
\ref{lemma:length-of-roots} states that the projection $\pi_{S^1}
(r(j, \mathbf{b}))$ of each factor of Billey's formula depends only
on the root $b_j$ and not on the location $j$ in $\mathbf{b}$.  Since
$s_i \leq w$ for each $i \in {\mathcal{A}}$, we conclude that the
product
\[
\prod_{i \in \mathcal{A}} \left( \left( i - \toe{\mathcal{A}}{i}
    +1\right) t\right) 
\]
divides each summand in Billey's
formula for $p_w(w_{\mathcal{A}})$. On the other hand, Corollary~\ref{cor:pvAwA} shows
that $p_{v_{\mathcal{A}}}(w_{\mathcal{A}}) =
\prod_{i \in {\mathcal{A}}} ((i - \toe{{\mathcal{A}}}{i}+1)t)$.  Hence
each summand in Billey's formula for $p_w(v_{\mathcal{A}})$ is
divisible by 
$p_{v_{\mathcal{A}}}(w_{\mathcal{A}})$.
Since each summand is divisible by
  $p_{v_{\mathcal{A}}}(w_{\mathcal{A}})$, so is the sum $p_w(v_{\mathcal{A}})$. 
\end{proof}

Finally, we prove that the classes $\{p_{v_{\mathcal{A}}}\}$ are
uniquely specified among all Peterson Schubert classes by their
upper-triangularity properties and their values at the appropriate
$w_{\mathcal{A}}$.  We emphasize that the uniqueness statement given
below in
Proposition~\ref{prop:pvA-unique} is at the level of cohomology
classes in $H^*_{S^1}(Y)$ and not at the level of elements $w \in
S_n$. More specifically, since the projection $H^*_T(G/B) \to
H^*_{S^1}(Y)$ is \textit{not} one-to-one, there may exist multiple $w
\in S_n$ such that $p_w = p_{v_A}$. This latter subtlety is explored
further in Proposition~\ref{prop:preimages-pvA}.

\begin{proposition}\label{prop:pvA-unique}
Let ${\mathcal{A}} \subseteq \{1,2,\ldots, n-1\}$. Suppose \(w \in
S_n\) is a permutation such that the corresponding Peterson
Schubert class $p_w$ satisfies the upper-triangularity condition for
${\mathcal{A}}$, i.e. 
  \[
p_w(w_{\mathcal{B}}) = 0 \quad \mbox{for all} \; \; w_{\mathcal{B}} \not \geq w_{\mathcal{A}},
\]
and agrees with $p_{v_{\mathcal{A}}}$ at $w_{\mathcal{A}}$, i.e. 
\[
p_w(w_{\mathcal{A}}) = p_{v_{\mathcal{A}}}(w_{\mathcal{A}}). 
\]
Then $p_w = p_{v_{\mathcal{A}}}$.
\end{proposition}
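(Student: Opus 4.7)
The plan is to expand $p_w$ in the module basis $\{p_{v_{\mathcal{C}}}\}$ of Theorem~\ref{theorem:pvA-basis} and argue, using upper-triangularity together with cohomology grading, that the expansion collapses to the single term $p_{v_{\mathcal{A}}}$ with coefficient $1$. Write
\[
p_w \;=\; \sum_{\mathcal{C} \subseteq \{1,\ldots,n-1\}} a_{\mathcal{C}} \, p_{v_{\mathcal{C}}}, \qquad a_{\mathcal{C}} \in \C[t],
\]
and set $S := \{\mathcal{C} : a_{\mathcal{C}} \neq 0\}$. Since $p_w(w_{\mathcal{A}}) = p_{v_{\mathcal{A}}}(w_{\mathcal{A}}) \neq 0$ by hypothesis and~\eqref{eq:pvA-at-wA-nonzero}, the class $p_w$ is nonzero and $S$ is nonempty. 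For any $\mathcal{C}_0 \in S$ that is minimal with respect to inclusion, evaluating at $w_{\mathcal{C}_0}$ and applying the upper-triangularity~\eqref{eq:pvA-at-wB} (which annihilates every $p_{v_{\mathcal{D}}}(w_{\mathcal{C}_0})$ with $\mathcal{D} \not\subseteq \mathcal{C}_0$) together with the minimality of $\mathcal{C}_0$ yields
\[
p_w(w_{\mathcal{C}_0}) \;=\; a_{\mathcal{C}_0}\, p_{v_{\mathcal{C}_0}}(w_{\mathcal{C}_0}),
\]
which is nonzero in the integral domain $\C[t]$ by~\eqref{eq:pvA-at-wA-nonzero}. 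The hypothesized upper-triangularity of $p_w$ then forces $w_{\mathcal{C}_0} \geq w_{\mathcal{A}}$, and since $v_{\mathcal{A}} \leq w_{\mathcal{A}}$ by Fact~\ref{fact:unique-vA-in-wA}, transitivity together with Lemma~\ref{lemma:containment} gives $\mathcal{A} \subseteq \mathcal{C}_0$.

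Next I would constrain the sizes of elements of $S$ from above via cohomology grading. The class $p_w$ is the image of $\sigma_w \in H^{2\ell(w)}_T(G/B)$ under the degree-preserving ring map of~\eqref{eq:comm-diag}, so $p_w$ is homogeneous of cohomology degree $2\ell(w)$ and its restriction $p_w(w_{\mathcal{A}})$ has degree at most $\ell(w)$ as a polynomial in $t$. On the other hand, $p_{v_{\mathcal{A}}}(w_{\mathcal{A}})$ has degree exactly $|\mathcal{A}|$ by Corollary~\ref{cor:pvAwA}, so the hypothesis $p_w(w_{\mathcal{A}}) = p_{v_{\mathcal{A}}}(w_{\mathcal{A}})$ forces $\ell(w) = |\mathcal{A}|$; in particular $p_w \in H^{2|\mathcal{A}|}_{S^1}(Y)$. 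Since each $p_{v_{\mathcal{C}}}$ is homogeneous of cohomology degree $2|\mathcal{C}|$, comparing graded pieces of the basis expansion makes each $a_{\mathcal{C}}$ homogeneous of degree $2|\mathcal{A}| - 2|\mathcal{C}|$ in $\C[t]$, whence $a_{\mathcal{C}} = 0$ whenever $|\mathcal{C}| > |\mathcal{A}|$.

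Combining these two conclusions, any minimal $\mathcal{C}_0 \in S$ satisfies both $\mathcal{A} \subseteq \mathcal{C}_0$ and $|\mathcal{C}_0| \leq |\mathcal{A}|$, so $\mathcal{C}_0 = \mathcal{A}$. Hence $\mathcal{A}$ is the unique minimal element of $S$; any further element of $S$ would have to strictly contain $\mathcal{A}$, violating the size bound. Therefore $S = \{\mathcal{A}\}$ and $p_w = a_{\mathcal{A}} p_{v_{\mathcal{A}}}$, and evaluating at $w_{\mathcal{A}}$ together with the integral domain property of $\C[t]$ yields $a_{\mathcal{A}} = 1$. The main obstacle is the grading bookkeeping: because coefficients live in $\C[t]$ rather than $\C$, the upper-triangular isolation technique on its own only pins down minimal elements of $S$, and it is the homogeneity of $p_w$ (deduced from $\ell(w) = |\mathcal{A}|$) that supplies the necessary upper bound on $|\mathcal{C}|$ to rule out larger contributors and close the argument.
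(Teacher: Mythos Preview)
Your proof is correct and follows essentially the same approach as the paper's: expand in the basis $\{p_{v_{\mathcal{C}}}\}$, use upper-triangularity to force any minimal element of the support to contain $\mathcal{A}$, and use the cohomology grading (deduced from the nonzero value at $w_{\mathcal{A}}$) to bound support sizes from above. The only cosmetic difference is that the paper expands the difference $p_w - p_{v_{\mathcal{A}}}$ and argues by contradiction, whereas you expand $p_w$ itself and argue directly; the underlying ideas and auxiliary results invoked are identical.
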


\begin{proof}
Any Peterson Schubert class $p_w$ is a homogeneous-degree class in
cohomology. The restriction of $p_w$ at
$w_{\mathcal{A}}$ agrees with that of $p_{v_{\mathcal{A}}}$.  By
Proposition~\ref{prop:number-of-ways} the class $p_{v_{\mathcal{A}}}$ 
has cohomology degree $2\abs{\mathcal{A}}$. 
Hence both $p_w$ and $p_w - p_{v_{\mathcal{A}}}$ have
cohomology degree $2\abs{\mathcal{A}}$.

Theorem~\ref{theorem:pvA-basis} says the
\(\{p_{v_{\mathcal{A}}}\}\) form a $H^*_{S^1}(\pt)$-basis for
$H^*_{S^1}(Y)$, so there are $c_{\mathcal{B}} \in
H^*_{S^1}(\pt)$ with
\[
p_w - p_{v_{\mathcal{A}}} = \sum_{\mathcal{B}} c_{\mathcal{B}} \cdot 
p_{v_{\mathcal{B}}}.
\]
Suppose that some $c_{\mathcal{B}} \neq 0$. Let $\mathcal{A}'$
be a minimal set with $c_{\mathcal{A}'} \neq 0$, meaning there is no $\mathcal{B}$
with $\mathcal{B} \subsetneq \mathcal{A}'$ with $c_{\mathcal{B}} \neq
0$. The upper-triangularity properties of the
$p_{v_{\mathcal{B}}}$ imply
\[
(p_w - p_{v_{\mathcal{A}}})(w_{\mathcal A'}) = c_{\mathcal{A}'} \cdot p_{v_{\mathcal{A}'}}(w_{\mathcal{A}'}). 
\]
By assumption on $\mathcal{A}'$, Corollary~\ref{cor:pvAwA}, and the
fact that $H^*_{S^1}(\pt)$ is an integral domain, the
right hand side of the above equality must be nonzero. Hence the left
hand side must also be non-zero. By the upper-triangularity conditions
on $p_w - p_{v_{\mathcal{A}}}$ and since $p_w(w_{\mathcal{A}}) = p_{v_{\mathcal{A}}}(w_{\mathcal{A}})$, we conclude that $\mathcal{A}
\subsetneq \mathcal{A}'$. In particular $2\abs{\mathcal{A}'} >
2\abs{\mathcal{A}}$ and consequently the cohomology degree of
$p_{v_{\mathcal{A'}}}$ is strictly greater than the cohomology degree
of $p_w - p_{v_{\mathcal{A}}}$. Moreover, any $H^*_{S^1}(\pt)$-multiple of
 $p_{v_{\mathcal{A}'}}$ must also be of cohomology degree strictly
greater than $p_w - p_{v_{\mathcal{A}}}$. Hence we achieve a
contradiction if any $c_{\mathcal{A}'} \neq 0$. We conclude all
coefficients are zero and that $p_w - p_{v_{\mathcal{A}}} = 0$, as
was to be shown. 
\end{proof}

\begin{remark} \label{remark:A-determines-pvA}
This proposition implies we may use the notation $p_{\mathcal A}$ instead of $p_{v_{\mathcal{A}}}$ to denote without ambiguity the Peterson Schubert class in $H^*_{S^1}(Y)$ that satisfies the upper-triangularity and minimality conditions for $w_{\mathcal A}$.  To maintain consistency, we will not change notation in this paper.
\end{remark}

As discussed above, Proposition~\ref{prop:pvA-unique} does not imply
uniquess at the level of permutations in $S_n$. Indeed, it is not
difficult to verify that if \(\mathcal{A} = [a_1, a_2]\) is a single
consecutive string, then 
\[
p_{(v_{\mathcal{A}})^{-1}} = p_{v_{\mathcal{A}}}
\]
as cohomology classes in $H^*_{S^1}(Y)$, although for most choices of
such $\mathcal{A}$ the two permutations are different.  We now prove that
this is essentially the only other permutation $w$ with $p_w =
p_{v_{\mathcal{A}}}$. 
 
 \begin{proposition}\label{prop:preimages-pvA}
Let $\mathcal{A} = [a_1,a_2] \subseteq \{1,2,\ldots,n-1\}$ be a maximal consecutive string with at least two elements.  Then $(v_{\mathcal{A}})^{-1}$ is the only permutation $w \neq v_{\mathcal{A}}$ with $p_w = p_{v_{\mathcal{A}}}$.
 \end{proposition}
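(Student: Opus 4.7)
The plan is to combine the uniqueness of Peterson Schubert class representatives given by Proposition~\ref{prop:pvA-unique} with a combinatorial count of reduced subwords of $w_{\mathcal{A}}$ coming from Billey's formula. Suppose $p_w = p_{v_{\mathcal{A}}}$. Then $p_w$ satisfies the upper-triangularity condition for $\mathcal{A}$ and agrees with $p_{v_{\mathcal{A}}}$ at $w_{\mathcal{A}}$. The first part of the proof of Proposition~\ref{prop:minimality} then forces $s_i \leq w$ if and only if $i \in \mathcal{A}$, so $w$ lies in the parabolic subgroup $W_{\mathcal{A}} \subseteq S_n$ generated by $\{s_{a_1}, s_{a_1+1}, \ldots, s_{a_2}\}$, and each $s_i$ with $i \in \mathcal{A}$ appears in every reduced word for $w$. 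Matching the cohomology degrees of $p_w$ and $p_{v_{\mathcal{A}}}$ forces $\ell(w) = |\mathcal{A}|$. Thus $w$ is a \emph{Coxeter element} of $W_{\mathcal{A}}$: its reduced words are precisely the words on the letters $s_{a_1}, s_{a_1+1}, \ldots, s_{a_2}$ (each used exactly once) whose product equals $w$.

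Next I would compute $p_w(w_{\mathcal{A}})$ via Billey's formula applied to the reduced decomposition~\eqref{eq:wA-reduced-word} of $w_{\mathcal{A}}$. By Lemma~\ref{lemma:length-of-roots}, the projected factor $\pi_{S^1}(r(j, \mathbf{b}))$ depends only on the value $b_j \in \mathcal{A}$, not on the position $j$. Since every reduced subword of $w_{\mathcal{A}}$ equal to $w$ uses each of $a_1, a_1+1, \ldots, a_2$ exactly once, the argument of Lemma~\ref{lemma:summands-equal} shows that every summand in Billey's formula for $p_w(w_{\mathcal{A}})$ equals $|\mathcal{A}|! \cdot t^{|\mathcal{A}|}$. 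Consequently $p_w(w_{\mathcal{A}}) = N_w \cdot |\mathcal{A}|! \cdot t^{|\mathcal{A}|}$, where $N_w$ is the number of reduced subwords of $w_{\mathcal{A}}$ equal to $w$. Combining with $p_{v_{\mathcal{A}}}(w_{\mathcal{A}}) = |\mathcal{A}|! \cdot t^{|\mathcal{A}|}$ from Corollary~\ref{cor:pvAwA} forces $N_w = 1$.

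The main remaining step, and the hardest, is the combinatorial claim that $N_w = 1$ precisely for $w \in \{v_{\mathcal{A}}, (v_{\mathcal{A}})^{-1}\}$. The decomposition~\eqref{eq:wA-reduced-word} exhibits $w_{\mathcal{A}}$ as a concatenation of blocks, with the $j$-th block reading $s_{a_1} s_{a_1+1} \cdots s_{a_2 - j + 1}$ for $j = 1, 2, \ldots, |\mathcal{A}|$. Any reduced subword of $w_{\mathcal{A}}$ equal to a Coxeter element $w$ is determined by a choice $B_i \in \{1, 2, \ldots, a_2 - i + 1\}$ for each $i \in \mathcal{A}$ recording the block from which $s_i$ is drawn, and any such choice automatically produces a reduced subword since no letter repeats. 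Encode a Coxeter element by the sign sequence $(\epsilon_{a_1}, \ldots, \epsilon_{a_2-1}) \in \{+, -\}^{|\mathcal{A}|-1}$, where $\epsilon_i = +$ if $s_i$ precedes $s_{i+1}$ in a reduced word for $w$ and $\epsilon_i = -$ otherwise; then $v_{\mathcal{A}}$ corresponds to the all-$+$ sequence and $(v_{\mathcal{A}})^{-1}$ to the all-$-$ sequence. The block-choice constraint becomes $B_i \leq B_{i+1}$ when $\epsilon_i = +$ and $B_i > B_{i+1}$ when $\epsilon_i = -$, subject to the upper bounds $B_i \leq a_2 - i + 1$. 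A case analysis on the sign sequence then finishes the proof: the all-$+$ and all-$-$ sequences each admit exactly one valid tuple $(B_i)$, whereas any mixed sequence either contains a $+$ immediately followed by a $-$, in which case some $B_i$ admits at least two choices, or is of the form consisting of $a \geq 1$ minuses followed by $b \geq 1$ pluses, in which case two distinct valid tuples can be exhibited explicitly. The main challenge lies in cleanly organizing this final case analysis.
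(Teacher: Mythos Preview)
Your proposal is correct, and your initial reduction---showing that any $w$ with $p_w = p_{v_{\mathcal{A}}}$ must be a Coxeter element of $W_{\mathcal{A}}$ and that the problem reduces to determining which Coxeter elements occur exactly once as a subword of the fixed reduced word for $w_{\mathcal{A}}$---matches the paper's argument essentially line for line.

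The key combinatorial step, however, is handled quite differently. The paper proceeds by induction on $|\mathcal{A}|$: since $s_{a_2}$ appears exactly once in the chosen reduced word for $w_{\mathcal{A}}$ and $w_{[a_1,a_2]} = s_{a_1} s_{a_1+1} \cdots s_{a_2}\, w_{[a_1,a_2-1]}$, one writes $w = s_{a_2} w'$ or $w = w' s_{a_2}$ (depending on the relative order of $s_{a_2-1}$ and $s_{a_2}$ in $w$) and reduces to the inductive hypothesis on $w'$, handling a few explicit subcases. Your approach instead parametrizes Coxeter elements by sign sequences $(\epsilon_i)$ and parametrizes length-$|\mathcal{A}|$ subwords of $w_{\mathcal{A}}$ by block-choice tuples $(B_i)$, converting the count $N_w$ into an enumeration of tuples satisfying simple monotonicity constraints. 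This encoding is genuinely more structural: it makes transparent why precisely the two extreme sign sequences are rigid and why any intermediate one is not, and it avoids induction entirely. The trade-off is that your final case analysis is sketched a bit loosely. In the $+-$ case, the phrase ``some $B_i$ admits at least two choices'' needs one line of care: given any valid tuple with $\epsilon_j = +$ and $\epsilon_{j+1} = -$, one has $B_{j+1} > B_{j+2} \geq 1$, so both $B_j = 1$ and $B_j = 2$ satisfy $B_j \leq B_{j+1}$; then filling in $B_{j-1}, \ldots, B_{a_1}$ greedily (taking $B_i = 1$ when $\epsilon_i = +$ and $B_i = a_2 - i + 1$ when $\epsilon_i = -$) produces two distinct valid tuples. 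With that detail supplied, your argument is complete and arguably cleaner than the paper's induction.
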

 
 \begin{proof}
   Suppose $w \neq v_{\mathcal{A}}$ and $p_w = p_{v_{\mathcal{A}}}$.
   Then $w < w_{\mathcal{A}}$ since $p_w(w_{\mathcal{A}})$ is nonzero;
   in particular $w > s_i$ only if $i \in \mathcal{A}$.  On the other
   hand, for any \(\mathcal{B} \subsetneq \mathcal{A},\) we must have
   \(w \not
   < w_{\mathcal{B}}\) since $p_w(w_{\mathcal{B}})$ is zero for all
   $\mathcal{B} \subsetneq \mathcal{A}$ by assumption; in particular for all \(i \in
   \mathcal{A}\) the simple transposition $s_i$ must appear in a
   reduced word decomposition of $w$, i.e. $s_i < w$ if $i
   \in \mathcal{A}$.  Since $p_w(w_{\mathcal{A}}) =
   p_{v_{\mathcal{A}}}(w_{\mathcal{A}})$ we conclude that $\ell(w) =
   \ell(v_{\mathcal{A}})$.  Hence $w$ is a permutation of the simple
   transpositions $s_i$ for all $ i \in \mathcal{A}$.  The Peterson
   Schubert class $p_w$ corresponding to any such $w$ satisfies the upper-triangularity condition for
   $\mathcal{A}$ so it suffices to find $w$ that satisfy the
   minimality condition.  By Proposition \ref{prop:number-of-ways},
   this is equivalent to finding $w$ that appear exactly once as a
   subword of $w_{\mathcal{A}}$.
 
 We induct on the size $|{\mathcal{A}}|$ of $\mathcal{A}$.  Let $\mathcal{A} = \{a_1, a_1+1\}$.  
There are exactly two words of length two in the letters $s_{a_1}, s_{a_1+1}$.  By direct calculation $p_{s_{a_1}s_{a_1+1}}(w_{\mathcal{A}}) = p_{s_{a_1+1}s_{a_1}}(w_{\mathcal{A}})$.  Hence the claim holds if $|\mathcal{A}| = 2$.
 
Now suppose the claim holds when $|{\mathcal{A}}| = j-1$ and let $|{\mathcal{A}}| = j$.  Exactly one of $s_{a_2} s_{a_2-1}$ and $s_{a_2-1}s_{a_2}$ is a subword of $w$.  The simple transposition $s_j$ commutes with $s_{a_2}$ if $j \in \{a_1, a_1+1,\ldots,a_2-2\}$.  Hence either $w = s_{a_2}w'$ or $w = w's_{a_2}$ depending on the relative position of $s_{a_2-1}$ and $s_{a_2}$.  We treat each case separately.
Recall also that the simple reflection $s_{a_2}$ appears exactly once in $w_{\mathcal{A}}$ and that $w_{\mathcal{A}} = s_{a_1} s_{a_1+1} \cdots s_{a_2} w_{[a_1, a_2 - 1]}$.   

Suppose $w = s_{a_2}w'$.  If \(w' \neq v_{[a_1, a_2-1]}\) or if \(w' \neq (v_{[a_1, a_2-1]})^{-1},\) then by the inductive hypothesis there are at least two subwords of $w_{[a_1,a_2-1]}$ that equal $w'$, which in turn implies there are at least two subwords of $w_{\mathcal{A}}$ equal to $w$. This contradicts the assumption on $w$, so either $w' = v_{[a_1,a_2-1]}$ or $w'=(v_{[a_1,a_2-1]})^{-1}$.  Now suppose $w' = v_{[a_1,a_2-1]}$. Then there are at least two subwords of $w_{\mathcal{A}}$ that equal $w$, namely the subword corresponding to $s_{a_1} s_{a_1+1} \cdots s_{a_2-2}s_{a_2} s_{a_2-1}$ and the subword corresponding to $s_{a_2} s_{a_1} s_{a_1+1} \cdots s_{a_2-2}s_{a_2-1}$, which again contradicts the hypothesis on $w$.  Finally suppose $w' = (v_{[a_1, a_2-1]})^{-1}$. Then $w = (v_{[a_1, a_2]})^{-1}$, and a direct calculation shows that $w_{\mathcal{A}}$ has a unique subword that equals $(v_{[a_1,a_2-1]})^{-1}$.  Hence \(w = (v_{[a_1, a_2]})^{-1}\) is the only word of the form $s_{a_2}w'$ that satisfies our hypotheses. 

Now suppose $w = w' s_{a_2}$.  If $w' \neq v_{[a_1,a_2-1]}$ then by
definition of $v_{[a_1, a_2-1]}$ and assumption on $w'$, the
indices of the simple transpositions in a reduced word decomposition
of $w'$ are not strictly increasing. In particular there exists an
index $j$ such that $s_{j+1} s_{j+2} \cdots s_{a_2}$ is a subword of
$w$ and $s_j s_{j+1} s_{j+2} \cdots s_{a_1}$ is not a subword of $w$.
Since each of $s_{a_1}, s_{a_1+1}, \ldots, s_{j-1}$ commutes with any
of the $s_{j+1}, s_{j+2}, \cdots, s_{a_2}$ and since $s_j$ commutes with all of $s_{j+2}, s_{j+3}, \cdots, s_{a_2}$ we may write $w$ as 
\[w = s_{j+1} s_{j+2} \cdots s_{a_2} w''\]
where $w''$ is a permutation of the transpositions $s_{a_1},
s_{a_1+1}, \ldots, s_j$.  The assumption on $w$ implies $j \neq
a_2-1$, so there is at least one way to insert $s_{j+1}, \ldots,
s_{a_2-1}$ into $w''$ so that it is neither $v_{[a_1,a_2-1]}$ nor
$(v_{[a_1,a_2-1]})^{-1}$.  Applying the inductive hypothesis to this
word, we conclude that $w''$ is a subword of $w_{[a_1,a_2-1]}$ in at
least two ways.  
This in turn implies that $w$ occurs as a subword of $w_{\mathcal{A}}$
in at least two ways, contradicting the assumption on $w$. 
Hence $w = v_{[a_1, a_2-1]} s_{a_1} = v_{[a_1,a_2]}$ is the only word
of the form $w' s_{a_2}$ that satisfies our hypotheses, completing the
proof. 
 \end{proof}

 If $\mathcal{A}$ has $k$ maximal consecutive substrings of size at
 least two, Lemma \ref{disjoint product} below shows that there are $2^k$
 different Peterson Schubert classes $p_w$ with $p_w =
 p_{v_{\mathcal{A}}}$.  These Peterson Schubert classes correspond to
 all possible choices of either $v_{[a_i,a_{i+1}]}$ or
 $(v_{[a_i,a_{i+1}]})^{-1}$ on each maximal substring.

\section{A manifestly-positive equivariant Monk formula for Peterson varieties}\label{sec:Monk}

One of the central problems of modern Schubert calculus is to find 
concrete combinatorial formulas for the (ordinary or
equivariant) structure constants in the (ordinary or equivariant,
generalized) cohomology rings, with respect to the special module basis of
Schubert classes. 
In line with this general philosophy, we
therefore ask for concrete combinatorial methods
to compute products $p_{v_{\mathcal{A}}} \cdot
p_{v_{\mathcal{B}}}$ of Peterson Schubert classes
$\{p_{v_{\mathcal{A}}}\}$, which we showed in Section~\ref{sec:module-basis} form an 
$H^*_{S^1}(\pt)$-module basis for $H^*_{S^1}(Y)$. 

In this section, we partly achieve this goal: we
prove an \textbf{$S^1$-equivariant Chevalley-Monk formula} (also called a
\textbf{Monk formula}) in the
$S^1$-equivariant cohomology of the Peterson variety, i.e. 
we obtain an explicit, combinatorial formula for the
product of an \textit{arbitrary Peterson Schubert class} with a
\textit{Peterson Schubert class of cohomology degree $2$}. 
As a word of caution, we note that the terminology in the literature is ambiguous. 
For instance, in the Schubert calculus of the classical
  Grassmanian, the term ``Chevalley-Monk formula'' 
  refers to a formula for the product of an arbitrary Schubert class
  with an arbitrary cohomology degree $2$ class (the `single-box'
  class), while a ``Pieri formula" refers to a formula for the
  product of an arbitrary Schubert class with an arbitrary ``special''
  Schubert class (the `single-row' classes), which generate the cohomology ring but 
  may have cohomology
  degree $\geq 2$. In other cases, the use of terminology seems to
  depend on the relative importance ascribed by the authors to the two possible
  definitions of the subset of `special classes': either `degree $2$' or
  `generate cohomology ring'.  This results in ambiguity in cases when
  the two definitions agree. For instance, in the case of the flag variety, ``Chevalley'' is
  sometimes used to refer to formulas for products with `single-box'
  classes \cite{Wil07}, sometimes ``Pieri'' or ``Pieri-Chevalley'' 
  refers to formulas for products with `single-box' classes
  \cite{PitRam99}, and sometimes ``Pieri'' is used for formulas with
  `single-row' classes \cite{LenSot07, Rob02}. 
    We adhere to the
  \textit{Iowa convention}, a standardization of terminology negotiated at a small Schubert calculus workshop
    in 2009 at the University of Iowa: we
  refer to formulas for multiplication by cohomology-degree-$2$
  classes as \textit{Chevalley-Monk} (or \textit{Monk}) formulas, while we refer to
  formulas for multiplication by ``special classes'' of degree $\geq
  2$ as \textit{Pieri} formulas.

We also prove that our Monk formula completely
determines the $S^1$-equivariant cohomology $H^*_{S^1}(Y)$ of the
Peterson variety, namely that the cohomology-degree-$2$
classes generate $H^*_{S^1}(Y)$ as a ring. Moreover, we show that our
Monk formula is quite simple in that ``most terms are zero'' (made
precise below), and that the structure 
constants in our Monk formula are \textbf{non-negative} and
\textbf{integral}, either literally or in the sense of Graham,
depending on the polynomial degree of the structure constant. This
yields an explicit description via generators and relations of
$H^*_{S^1}(Y)$. Finally, we give analogues of the above results in the
context of the \emph{ordinary} cohomology $H^*(Y)$ of the Peterson
variety.

We begin with a definition for notational convenience.

\begin{definition}
Let $p_i$ denote the class $p_{s_i} \in H^*_{S^1}(Y)$, i.e. the Peterson Schubert class $p_{v_{\mathcal{A}}}$ for the singleton ${\mathcal{A}} = \{i\}$.
\end{definition}

From Proposition~\ref{prop:deg-pvA}, the set of
$\{p_i\}_{i=1}^{n-1}$ are exactly the cohomology degree $2$ classes
among the Peterson Schubert classes. We now prove that these, together with
one more degree $2$ class coming from $H^*_{S^1}(\pt)$, are in
fact \textit{ring} generators for $H^*_{S^1}(Y)$. Recall that
the $H^*_{S^1}(\pt)$-module structure of $H^*_{S^1}(Y)$ comes from the
ring map \(\pi_{BS^1}^*: H^*_{S^1}(\pt) \to H^*_{S^1}(Y)\) induced
from the projection $\pi_{BS^1}$ in the fiber bundle \(Y \to Y
\times_{S^1} ES^1 \stackrel{\pi_{BS^1}}{\longrightarrow} BS^1.\) In particular we view the
equivariant element \(t \in \C[t] \cong
H^*_{S^1}(\pt)\) of cohomology degree $2$ also as an element of $H^*_{S^1}(Y)$. 
We have the following.

\begin{proposition}\label{prop:pi-generate}
Let $Y$ be the type $A_{n-1}$ Peterson variety, equipped with the
natural $S^1$-action defined by~\eqref{eq:def-circle}. 
The Peterson Schubert classes $\{p_i: i = 1,
\ldots, n-1\}$ of cohomology degree $2$ together with the pure
equivariant degree $2$ class $t \in H^*_{S^1}(Y)$ generate the $S^1$-equivariant cohomology
$H^*_{S^1}(Y)$ as a ring.
\end{proposition}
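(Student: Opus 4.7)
The plan is to establish the stronger claim that for each $\mathcal{A} \subseteq \{1,2,\ldots,n-1\}$, the product $\prod_{i \in \mathcal{A}} p_i$ equals a nonzero \emph{scalar} multiple of $p_{v_{\mathcal{A}}}$. Since $\{p_{v_{\mathcal{A}}}\}$ is an $H^*_{S^1}(\pt)$-module basis by Theorem~\ref{theorem:pvA-basis}, this immediately yields that $\{p_1,\ldots,p_{n-1}\}$ together with $t$ generate $H^*_{S^1}(Y)$ as a ring.

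First I would fix $\mathcal{A}$ and expand the product in the module basis
\[
\prod_{i \in \mathcal{A}} p_i \; = \; \sum_{\mathcal{B}} c_{\mathcal{B}} \, p_{v_{\mathcal{B}}}, \qquad c_{\mathcal{B}} \in \C[t],
\]
and then pin down the support using two separate constraints. The first is homogeneity: since each $p_i$ has cohomology degree $2$, the left side is homogeneous of cohomology degree $2|\mathcal{A}|$, while each $p_{v_{\mathcal{B}}}$ has cohomology degree $2|\mathcal{B}|$ by Proposition~\ref{prop:deg-pvA}; thus every nonzero $c_{\mathcal{B}}$ is homogeneous in $t$ of degree $|\mathcal{A}|-|\mathcal{B}|$, forcing $|\mathcal{B}| \leq |\mathcal{A}|$. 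The second is upper-triangularity: by Theorem~\ref{theorem:pvA-basis} together with Lemma~\ref{lemma:containment}, $p_i(w_{\mathcal{B}}) = 0$ whenever $i \notin \mathcal{B}$, hence the left side vanishes at $w_{\mathcal{B}}$ unless $\mathcal{A} \subseteq \mathcal{B}$. Evaluating at any inclusion-minimal $\mathcal{B}_0$ in the support gives
\[
\Bigl(\prod_{i \in \mathcal{A}} p_i\Bigr)(w_{\mathcal{B}_0}) \; = \; c_{\mathcal{B}_0} \cdot p_{v_{\mathcal{B}_0}}(w_{\mathcal{B}_0}),
\]
and the right side is nonzero by Corollary~\ref{cor:pvAwA-nonzero}, so the left side is nonzero, whence $\mathcal{A} \subseteq \mathcal{B}_0$. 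Every element of the (finite) support contains a minimal one, so $\mathcal{A} \subseteq \mathcal{B}$ for every $\mathcal{B}$ in the support. Combined with $|\mathcal{B}| \leq |\mathcal{A}|$, this forces $\mathcal{B} = \mathcal{A}$ and $c_{\mathcal{A}} \in \C$.

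It remains to identify $c_{\mathcal{A}}$ and verify it is nonzero. Applying Proposition~\ref{prop:number-of-ways} to each $p_i$ with the reduced word~\eqref{eq:wA-reduced-word} of $w_{\mathcal{A}}$ (where a direct count in the maximal consecutive substring containing $i$ shows that $s_i$ appears exactly $\mathcal{H}_{\mathcal{A}}(i) - i + 1$ times in that reduced word), one obtains
\[
p_i(w_{\mathcal{A}}) \; = \; (\mathcal{H}_{\mathcal{A}}(i) - i + 1)(i - \mathcal{T}_{\mathcal{A}}(i) + 1)\, t.
\]
Evaluating both sides of the collapsed identity $\prod_{i \in \mathcal{A}} p_i = c_{\mathcal{A}} p_{v_{\mathcal{A}}}$ at $w_{\mathcal{A}}$ and using Corollary~\ref{cor:pvAwA} on the right yields the explicit value $c_{\mathcal{A}} = \prod_{i \in \mathcal{A}}(\mathcal{H}_{\mathcal{A}}(i) - i + 1)$, a positive integer. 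Hence $p_{v_{\mathcal{A}}} = c_{\mathcal{A}}^{-1}\prod_{i \in \mathcal{A}} p_i$ lies in the $\C$-subring of $H^*_{S^1}(Y)$ generated by $\{p_1,\ldots,p_{n-1}\}$, completing the argument. The main subtlety is showing the basis expansion collapses to a single term: this requires \emph{both} the degree constraint and the upper-triangularity, since either alone would permit additional $\mathcal{B}$ (incomparable to $\mathcal{A}$ of the same size, or strict supersets, respectively).
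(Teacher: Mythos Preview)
Your proof is correct and takes a genuinely different route from the paper's. The paper's argument is much shorter and more external: it observes that $H^*_T(G/B)$ (hence $H^*_{S^1}(G/B)$) is well known to be generated in degree $2$, asserts that the restriction $H^*_{S^1}(G/B) \to H^*_{S^1}(Y)$ is surjective, and then notes that the degree-$2$ piece of $H^*_{S^1}(Y)$ is $\C$-spanned by $\{p_i\}_{i=1}^{n-1}$ together with $t$. Your argument, by contrast, is entirely internal to the Peterson Schubert calculus already developed: using only the module basis theorem, upper-triangularity, and the restriction formulas of Section~\ref{sec:combo}, you collapse $\prod_{i\in\mathcal{A}} p_i$ to a single basis element and compute the scalar. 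This avoids any appeal to surjectivity of the restriction map or to the classical degree-$2$ generation of $H^*_T(G/B)$, and it actually yields more: the identity
\[
p_{v_{\mathcal{A}}} \;=\; \Bigl(\,\prod_{i\in\mathcal{A}}(\head{\mathcal{A}}{i}-i+1)\Bigr)^{-1}\prod_{i\in\mathcal{A}} p_i
\]
is a Giambelli-type formula for the Peterson Schubert classes, which the paper only alludes to as future work in the introduction. The trade-off is that the paper's argument is two lines, while yours does real work---but that work is rewarded with an explicit formula rather than a bare generation statement.
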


\begin{proof}
It is well-known that $H^*_T(G/B)$ is generated in degree $2$, as is
$H^*_{S^1}(G/B)$. Since the restriction map $H^*_{S^1}(G/B) \to
H^*_{S^1}(Y)$ is surjective, the same holds true for
$H^*_{S^1}(Y)$. We have already seen that the 
$\{p_{v_{\mathcal{A}}}\}_{\mathcal{A} \subseteq \{1,2,\ldots,n-1\}}$
are a $H^*_{S^1}(\pt)$-module basis, and in particular the 
subspace of $H^*_{S^1}(Y)$ of degree $2$ is $\C$-spanned by 
\(\{p_i\}_{i=1}^{n-1}\) and the single `pure equivariant' class $t$. 
The result follows. 
\end{proof}

Monk's
formula is an explicit relationship between ring generators 
and module generators. 
More precisely, the fact that the set $\{p_{v_{\mathcal{A}}}\}$ form a
module basis for $H^*_{S^1}(Y)$ implies that for any $p_i$ and
$p_{v_{\mathcal{A}}}$ there exist structure constants
$c^{{\mathcal{B}}}_{i,{\mathcal{A}}} \in H^*_{S^1}(\pt) \cong \C[t]$ 
such that 
\begin{equation}\label{eq:general-Monk} 
p_i \cdot p_{v_{\mathcal{A}}} = \sum_{\mathcal{B}} c^{{\mathcal{B}}}_{i,{\mathcal{A}}} \cdot p_{v_{\mathcal{B}}}.
\end{equation}
Our main theorem of this section
provides a simple 
combinatorial formula for the $c^{\mathcal{B}}_{i,
  \mathcal{A}}$. Its proof has
several steps which occupy the rest of this section. 

We begin by proving that a simple condition on the
subsets $\mathcal{B}$ guarantees that the corresponding
structure constants $c^{\mathcal{B}}_{i,\mathcal{A}}$ are zero. This
allows us to refine the summation on the right hand side
of~\eqref{eq:general-Monk} and to obtain some simple formulas for
structure constants, as below.

\begin{proposition}\label{proposition:Monk-refined-1}
Let $\mathcal{A} \subseteq \{1,2,\ldots, n-1\}$ and $i \in
\{1,2,\ldots, n-1\}$. Then 
\begin{equation}\label{eq:Monk-refined}
p_i \cdot p_{v_{\mathcal{A}}} = c^{{\mathcal{A}}}_{i,{\mathcal{A}}} p_{v_{\mathcal{A}}} + \sum_{{\mathcal{A}} \subsetneq {\mathcal{B}} \textup{ and } |{\mathcal{B}}|
  = |{\mathcal{A}}|+1} c^{\mathcal{B}}_{i,{\mathcal{A}}} \cdot p_{v_{\mathcal{B}}},
\end{equation}
where 
\begin{enumerate} 
\item  $c^{\mathcal{A}}_{i,{\mathcal{A}}} = p_i(w_{\mathcal{A}})$ and 
\item if \(\mathcal{A} \subsetneq \mathcal{B}\) and
  \(\abs{\mathcal{B}} = \abs{\mathcal{A}} +1,\) then 
\begin{equation}\label{eq:cBiA-formula}
c^{\mathcal{B}}_{i,{\mathcal{A}}} =   (p_i(w_{\mathcal{B}}) -
p_i(w_{\mathcal{A}}))
\frac{p_{v_{\mathcal{A}}}(w_{\mathcal{B}})}{p_{v_{\mathcal{B}}}(w_{\mathcal{B}})}.
\end{equation}
\end{enumerate} 
\end{proposition}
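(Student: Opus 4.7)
The plan is to exploit the two structural tools that have been assembled: first, the restriction map $\imath^*: H^*_{S^1}(Y) \to H^*_{S^1}(Y^{S^1})$ is injective (Theorem~\ref{theorem:injectivity}), so an identity in $H^*_{S^1}(Y)$ is equivalent to a list of identities, one at each fixed point $w_{\mathcal{B}}$; and second, the Peterson Schubert classes form an $H^*_{S^1}(\pt)$-module basis (Theorem~\ref{theorem:pvA-basis}), so the expansion of $p_i \cdot p_{v_{\mathcal{A}}}$ as $\sum_{\mathcal{C}} c^{\mathcal{C}}_{i,\mathcal{A}} p_{v_{\mathcal{C}}}$ is unique. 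With this setup, I would pin down the structure constants by evaluating at carefully chosen fixed points and solving the resulting linear equations in $\C[t]$.

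The first step is a degree count: since $p_i$ has cohomology degree $2$ and $p_{v_{\mathcal{C}}}$ has cohomology degree $2|\mathcal{C}|$, any nonzero $c^{\mathcal{C}}_{i,\mathcal{A}}$ must satisfy $\deg c^{\mathcal{C}}_{i,\mathcal{A}} = 2(|\mathcal{A}|+1 - |\mathcal{C}|) \ge 0$, forcing $|\mathcal{C}| \le |\mathcal{A}|+1$. The second step is to prove that $c^{\mathcal{C}}_{i,\mathcal{A}} = 0$ unless $\mathcal{A} \subseteq \mathcal{C}$: pick a minimal $\mathcal{C}$ (under inclusion) with $c^{\mathcal{C}}_{i,\mathcal{A}} \neq 0$, evaluate~\eqref{eq:general-Monk} at $w_{\mathcal{C}}$, and use the upper-triangularity~\eqref{eq:pvA-at-wB} together with the minimality of $\mathcal{C}$ to cancel all other terms; this leaves $c^{\mathcal{C}}_{i,\mathcal{A}} \cdot p_{v_{\mathcal{C}}}(w_{\mathcal{C}}) = p_i(w_{\mathcal{C}}) \cdot p_{v_{\mathcal{A}}}(w_{\mathcal{C}})$, and since $p_{v_{\mathcal{A}}}(w_{\mathcal{C}}) = 0$ when $\mathcal{A} \not\subseteq \mathcal{C}$, the claim follows. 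Combining the degree bound with $\mathcal{A} \subseteq \mathcal{C}$ shows that the only surviving terms in the sum are $\mathcal{C} = \mathcal{A}$ and $\mathcal{A} \subsetneq \mathcal{C}$ with $|\mathcal{C}| = |\mathcal{A}|+1$, which is precisely~\eqref{eq:Monk-refined}.

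For the explicit formulas, I would evaluate~\eqref{eq:Monk-refined} at $w_{\mathcal{A}}$: every $p_{v_{\mathcal{B}}}(w_{\mathcal{A}})$ with $\mathcal{A} \subsetneq \mathcal{B}$ vanishes by upper-triangularity, leaving $c^{\mathcal{A}}_{i,\mathcal{A}} = p_i(w_{\mathcal{A}})$ after dividing by $p_{v_{\mathcal{A}}}(w_{\mathcal{A}}) \neq 0$ (Corollary~\ref{cor:pvAwA-nonzero}; note $\C[t]$ is an integral domain). Then, for $\mathcal{A} \subsetneq \mathcal{B}$ with $|\mathcal{B}| = |\mathcal{A}|+1$, I would evaluate at $w_{\mathcal{B}}$: any competing $\mathcal{B}'$ with $\mathcal{A} \subsetneq \mathcal{B}'$, $|\mathcal{B}'| = |\mathcal{A}|+1$, and $\mathcal{B}' \neq \mathcal{B}$ cannot satisfy $\mathcal{B}' \subseteq \mathcal{B}$ (equal cardinalities would force equality), so $p_{v_{\mathcal{B}'}}(w_{\mathcal{B}}) = 0$, collapsing the equation to a single linear relation that I solve for $c^{\mathcal{B}}_{i,\mathcal{A}}$, yielding~\eqref{eq:cBiA-formula}. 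The only real obstacle is the bookkeeping around Step 2 — ensuring every term that should vanish does so by the correct combination of upper-triangularity and cardinality comparison — but no delicate new input is required beyond what Section~\ref{sec:module-basis} and Section~\ref{sec:combo} already provide.
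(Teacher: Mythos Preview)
Your proposal is correct and follows essentially the same route as the paper's proof: the degree bound, the minimality-and-upper-triangularity argument to force $\mathcal{A}\subseteq\mathcal{C}$, and the successive evaluations at $w_{\mathcal{A}}$ and $w_{\mathcal{B}}$ are exactly what the paper does. The only cosmetic difference is that the paper picks a minimal $\mathcal{C}$ among those with $c^{\mathcal{C}}_{i,\mathcal{A}}\neq 0$ \emph{and} $\mathcal{A}\not\subseteq\mathcal{C}$, whereas you pick a minimal $\mathcal{C}$ among all nonzero-coefficient sets; your version works too once one notes that any $\mathcal{C}$ with nonzero coefficient lies above some such minimal element.
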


\begin{proof} 
 For simplicity, in this argument we use
  the polynomial degree of the Peterson Schubert classes instead of
  the cohomology degree. (Recall that the cohomology
  degree is double the polynomial degree.)  

  Note that the degree of $p_i$ is $1$, so by
  Proposition~\ref{prop:deg-pvA} the left hand side
  of~\eqref{eq:general-Monk} is homogeneous of degree
  $\abs{\mathcal{A}}+1$.  Since each
  $c^{\mathcal{B}}_{i,{\mathcal{A}}}$ is a polynomial in
  $\C[t]$, the term $c^{\mathcal{B}}_{i,{\mathcal{A}}}
  p_{v_{\mathcal{B}}}$ in the right hand side
  of~\eqref{eq:general-Monk} has degree at least $\abs{\mathcal{B}}$. The
  degree of the right hand side agrees with that of the left, and
  the $\{p_{v_{\mathcal A}}\}$ are $\C[t]$-linearly independent, so
  $c^{\mathcal{B}}_{i,{\mathcal{A}}} = 0$ if $\abs{\mathcal{B}} >
  \abs{\mathcal{A}}+1$.  In other words
\[
p_i \cdot p_{v_{\mathcal{A}}} = \sum_{|{\mathcal{B}}| \leq
  |{\mathcal{A}}|+1} c^{\mathcal{B}}_{i,{\mathcal{A}}} \cdot  p_{v_{\mathcal{B}}}.
\]

We now claim that $c^{\mathcal{B}}_{i,\mathcal{A}}=0$ for any
$\mathcal{B}$ with $\mathcal{A} \not \subseteq \mathcal{B}$. We argue
by contradiction. Suppose there exists some $\mathcal{B}$ with $\mathcal{B} \not \supseteq \mathcal{A}$ 
such that $c^{\mathcal{B}}_{i,\mathcal{A}} \neq 0$. Then there is a
minimal such; denote it
$\mathcal{A}'$. Evaluate the equation~\eqref{eq:general-Monk} at
$w_{\mathcal{A}'}$.  Since $\mathcal{A} \not \subseteq \mathcal{A}'$
the left hand side is zero.  The minimality assumption on
$\mathcal{A}'$ implies that $c_{i,\mathcal{A}}^{\mathcal{B}} = 0$ if $\mathcal{B} \subsetneq \mathcal{A}'$ while the upper-triangularity property of Peterson
Schubert classes implies that $p_{v_{\mathcal{B}}}(w_{\mathcal{A}'}) = 0$ if $\mathcal{B} \not \subseteq \mathcal{A}'$.  Hence
\[
0 = c^{\mathcal{A}'}_{i,\mathcal{A}} \cdot p_{v_{\mathcal{A}'}}(w_{\mathcal{A}'}).
\]
Since $H^*_{S^1}(\pt) \cong \C[t]$ is an integral domain, either $c^{\mathcal{A}'}_{i,\mathcal{A}}$ or $p_{v_{\mathcal{A}'}}(w_{\mathcal{A}'})$ is zero.   By Corollary~\ref{cor:pvAwA}  we
conclude $c^{\mathcal{B}}_{i,\mathcal{A}} = 0$
if $\mathcal{A} \not \subseteq \mathcal{B}$.  This proves~\eqref{eq:Monk-refined}. 

To prove the formula for $c^{\mathcal{A}}_{i,\mathcal{A}}$ we 
evaluate~\eqref{eq:Monk-refined} at the fixed point $w_{\mathcal{A}}$. If ${\mathcal{B}}$ satisfies ${\mathcal{A}} \subsetneq {\mathcal{B}}$ then $w_{\mathcal{B}}  >w_{\mathcal{A}}$ so $p_{v_{\mathcal{B}}}(w_{\mathcal{A}}) = 0$. We conclude 
\[
p_i(w_{\mathcal{A}}) p_{v_{\mathcal{A}}}(w_{\mathcal{A}}) =
c^{\mathcal{A}}_{i,{\mathcal{A}}} \cdot  p_{v_{\mathcal{A}}}(w_{\mathcal{A}}).
\]
Since $p_{v_{\mathcal{A}}}(w_{\mathcal{A}}) \neq 0$ and
$H^*_{S^1}(\pt) \cong \C[t]$ is an integral domain, we conclude
\[
c^{\mathcal{A}}_{i,{\mathcal{A}}} = p_i(w_{\mathcal{A}}).
\]

To prove the last claim, suppose that $\mathcal{B}$ is such that
\(\mathcal{A} \subsetneq \mathcal{B}\) and \(\abs{\mathcal{B}} =
\abs{\mathcal{A}}+1.\) 
Evaluating~\eqref{eq:Monk-refined} at the fixed point $w_{\mathcal{B}}$
we obtain
\[
p_i(w_{\mathcal{B}}) \cdot p_{v_{\mathcal{A}}}(w_{\mathcal{B}}) =
c^{\mathcal{A}}_{i,{\mathcal{A}}} \cdot p_{v_{\mathcal{A}}}(w_{\mathcal{B}})
+ \sum_{{\mathcal{A}} \subsetneq {\mathcal{B}} \textup{ and } \abs{\mathcal{B}}
  = \abs{\mathcal{A}}+1}   c^{\mathcal{B}}_{i,{\mathcal{A}}} \cdot p_{v_{\mathcal{B}}}(w_{\mathcal{B}}).
\]
The previous claim showed $c^{\mathcal{A}}_{i,{\mathcal{A}}} =
p_i(w_{\mathcal{A}})$.  If $\mathcal{B}' \neq \mathcal{B}$ is
another subset in the sum above, 
the upper-triangularity
condition on the Peterson Schubert classes implies
$p_{v_{\mathcal{B}'}}(w_{\mathcal{B}}) = 0$. Hence  
\[
p_i(w_{\mathcal{B}}) \cdot p_{v_{\mathcal{A}}}(w_{\mathcal{B}}) = 
c^{\mathcal{A}}_{i,\mathcal{A}} \cdot p_{v_{\mathcal{A}}}(w_{\mathcal{B}}) +
c^{\mathcal{B}}_{i,\mathcal{A}} \cdot p_{v_{\mathcal{B}}}(w_{\mathcal{B}}).
\]
By Corollary~\ref{cor:pvAwA}, we know
$p_{v_{\mathcal{B}}}(w_{\mathcal{B}}) \neq 0$, so we may solve for 
$c^{\mathcal{B}}_{i,{\mathcal{A}}}$ to obtain~\eqref{eq:cBiA-formula},
as desired. 
\end{proof}

Next we compute explicitly the 
expression for $c^{\mathcal{B}}_{i,{\mathcal{A}}}$ in~\eqref{eq:cBiA-formula}.
We need some preliminary lemmas.

\begin{lemma}
Suppose $i\in \{1,2,\ldots,n-1\}$ and ${\mathcal{A}} \subseteq \{1,2,\ldots,n-1\}$.
\begin{itemize} 
\item If $i \not \in {\mathcal{A}}$ then $p_i(w_{\mathcal{A}}) = 0$.  
\item If $i \in \mathcal{A}$ then 
\begin{equation}\label{eq:piwA}
p_i(w_{\mathcal{A}}) =
\left(\head{{\mathcal{A}}}{i}-i+1\right)\left(i-\toe{{\mathcal{A}}}{i}+1\right)t.
\end{equation}
\end{itemize} 
\end{lemma}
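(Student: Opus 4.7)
The plan is to handle the two bullets separately, using Theorem~\ref{theorem:pvA-basis} for the first and Proposition~\ref{prop:number-of-ways} for the second.

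For the first bullet, if $i \not\in \mathcal{A}$ then the singleton set $\{i\}$ is not a subset of $\mathcal{A}$, so the upper-triangularity condition~\eqref{eq:pvA-at-wB} from Theorem~\ref{theorem:pvA-basis} immediately gives $p_i(w_{\mathcal{A}}) = p_{v_{\{i\}}}(w_{\mathcal{A}}) = 0$. This case is one line.

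For the second bullet, I would specialize Proposition~\ref{prop:number-of-ways} to $\mathcal{B}=\mathcal{A}$ and the singleton subset $\{i\} \subseteq \mathcal{A}$. Since $v_{\{i\}} = s_i$ and $\prod_{j \in \{i\}}(j - \toe{\mathcal{A}}{j}+1) = i - \toe{\mathcal{A}}{i}+1$, the proposition reduces to
\[
p_i(w_{\mathcal{A}}) \;=\; N \cdot (i - \toe{\mathcal{A}}{i} + 1)\, t,
\]
where $N$ is the number of subwords of the fixed reduced word~\eqref{eq:wA-reduced-word} for $w_{\mathcal{A}}$ that equal the length-one word $s_i$, i.e.\ the number of positions $m$ with $b_m = i$.

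The remaining task, and the only real calculation, is to show that $N = \head{\mathcal{A}}{i} - i + 1$. Writing $\mathcal{A} = [a_1,a_2] \cup [a_3,a_4] \cup \cdots \cup [a_{m-1},a_m]$ as a disjoint union of maximal consecutive substrings, the reduced word~\eqref{eq:wA-reduced-word} is the concatenation of the reduced words~\eqref{eq:wjk-reduced-word} for the factors $w_{[a_{2j-1},a_{2j}]}$, and each such factor only uses simple transpositions $s_k$ with $k \in [a_{2j-1}, a_{2j}]$. Hence $s_i$ can only occur in the unique factor $w_{[\toe{\mathcal{A}}{i},\,\head{\mathcal{A}}{i}]}$. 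In the decomposition~\eqref{eq:wjk-reduced-word} of that factor, the outer index $k$ runs from $0$ to $\head{\mathcal{A}}{i} - \toe{\mathcal{A}}{i}$ and the inner block $k$ consists of $s_{\toe{\mathcal{A}}{i}}, s_{\toe{\mathcal{A}}{i}+1}, \ldots, s_{\head{\mathcal{A}}{i} - k}$; thus $s_i$ appears in block $k$ exactly when $k \leq \head{\mathcal{A}}{i} - i$, giving $\head{\mathcal{A}}{i} - i + 1$ occurrences.

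Substituting this count into the displayed formula yields $p_i(w_{\mathcal{A}}) = (\head{\mathcal{A}}{i} - i + 1)(i - \toe{\mathcal{A}}{i} + 1)\, t$, as claimed. The main (and only nontrivial) obstacle is the bookkeeping for the count $N$, which is a direct inspection of~\eqref{eq:wjk-reduced-word}; alternatively, one could bypass Proposition~\ref{prop:number-of-ways} and derive the same answer by summing the contributions $\pi_{S^1}(r(m,\mathbf{b})) = (i - \toe{\mathcal{A}}{i} + 1)\,t$ given by Lemma~\ref{lemma:length-of-roots} over the $\head{\mathcal{A}}{i} - i + 1$ positions in $\mathbf{b}$ with $b_m = i$.
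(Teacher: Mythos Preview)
Your proof is correct and follows essentially the same approach as the paper: both use Proposition~\ref{prop:number-of-ways} together with the count of occurrences of $s_i$ in the reduced word~\eqref{eq:wA-reduced-word}, which the paper dismisses as ``by inspection'' and you spell out in detail. The only cosmetic difference is in the first bullet, where the paper observes directly that $s_i$ is absent from any reduced word for $w_{\mathcal{A}}$ (so Billey's formula has no summands), whereas you invoke the upper-triangularity of Theorem~\ref{theorem:pvA-basis}; both are one-line arguments.
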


\begin{proof}
  If $i$ is not contained in ${\mathcal{A}}$ then $s_i$ does not
  appear in $w_{\mathcal{A}}$ and so $p_i(w_{\mathcal{A}}) = 0$. Now
  suppose \(i \in {\mathcal{A}}.\) We saw in
  Lemma~\ref{lemma:length-of-roots} that each summand in
Billey's formula for 
  $p_i(w_{\mathcal{A}})$ is \((i -
  \toe{{\mathcal{A}}}{i} +1) t.\) On the other hand $s_i$ appears
  exactly \(\head{{\mathcal{A}}}{i}-i+1\) times in the reduced word
  for $w_{\mathcal{A}}$ given in
  equation~\eqref{eq:wA-reduced-word}, by inspection. 
Equation~\eqref{eq:piwA} now follows from 
  Proposition~\ref{prop:number-of-ways}.
\end{proof}

The previous lemma lets us further refine 
the vanishing conditions for $c^{\mathcal{B}}_{i,{\mathcal{A}}}$. We begin with terminology. 

\begin{definition}
Given any index $k$ and any
subset ${\mathcal{A}} \subseteq \{1,2,\ldots, n-1\}$ containing $k$,
we refer to $[\toe{\mathcal{A}}{k}, \head{\mathcal{A}}{k}]$ as 
{\bf the maximal consecutive substring of ${\mathcal{A}}$ which contains
  $k$}. 
\end{definition}

Let ${\mathcal{A}} \subseteq \{1,2,\ldots,n-1\}$. 
If ${\mathcal{B}}$ is a
subset such that ${\mathcal{A}} \subsetneq {\mathcal{B}}$ and
$|{\mathcal{B}}|=|{\mathcal{A}}|+1$ then there exists $k \in
\{1,2,\ldots,n-1\}$ with \(k \not \in \mathcal{A}\) and 
${\mathcal{B}} = {\mathcal{A}} \cup \{k\}$.  
Exactly one of the following occurs:
\begin{enumerate} 
\item a maximal consecutive substring in ${\mathcal{A}}$ is
  lengthened, from either \([k+1, \head{{\mathcal{B}}}{k}]\) or
  \([\toe{{\mathcal{B}}}{k}, k-1]\) to $[\toe{{\mathcal{B}}}{k},
  \head{{\mathcal{B}}}{k}]$, with either $\toe{\mathcal{B}}{k} = k$ or
$\head{\mathcal{B}}{k}=k$ respectively;
\item two maximal consecutive substrings in ${\mathcal{A}}$ are
  merged, namely $[\toe{{\mathcal{B}}}{k},k-1]$ and $[k+1,
  \head{{\mathcal{B}}}{k}]$ are both in ${\mathcal{A}}$ and become
  $[\toe{{\mathcal{B}}}{k}, \head{{\mathcal{B}}}{k}]$ in $\mathcal{B}$; or
\item the new index $k$ is itself a maximal consecutive
  substring $\{k\} = [k,k]$ in ${\mathcal{B}}$. 
\end{enumerate} 
Conversely, all but one of the
maximal consecutive strings of ${\mathcal{B}}$ is a maximal consecutive
string in ${\mathcal{A}}$. Summarizing, the maximal consecutive strings of ${\mathcal{A}}$ that differ from the maximal consecutive strings in ${\mathcal{B}}$ are 
\[
\{\toe{{\mathcal{B}}}{k}, \toe{{\mathcal{B}}}{k}+1, \ldots, k-1\} \subseteq {\mathcal{A}} \quad \mbox{and} \quad 
\{k+1, k+2, \ldots, \head{{\mathcal{B}}}{k}\} \subseteq {\mathcal{A}},
\]
of lengths 
\[
k-1-\toe{{\mathcal{B}}}{k}+1 = k-\toe{{\mathcal{B}}}{k} \quad \mbox{and} \quad \head{{\mathcal{B}}}{k}-k-1+1=\head{{\mathcal{B}}}{k}-k
\]
respectively. (The first string is empty if \(k=\toe{{\mathcal{B}}}{k}\) and the second string is empty if \(\head{{\mathcal{B}}}{k}=k.\))

\begin{lemma} \label{lemma:vanishing-for-pi}
Suppose ${\mathcal{B}}$ is the disjoint union ${\mathcal{B}} = {\mathcal{A}} \cup \{k\}$.
If either one of the following conditions hold: 
\begin{itemize}
\item \(i \not \in {\mathcal{B}},\) or 
\item \(i \in {\mathcal{B}},\) and $i$ and $k$ are not contained in the same maximal consecutive substring in ${\mathcal{B}}$, namely \(\toe{{\mathcal{B}}}{i} = \toe{{\mathcal{A}}}{i}\) and \(\head{{\mathcal{B}}}{i} = \head{{\mathcal{A}}}{i},\)
\end{itemize}
then $c^{\mathcal{B}}_{i,{\mathcal{A}}} = 0$. 
\end{lemma}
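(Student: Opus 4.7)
The plan is to apply the explicit formula \eqref{eq:cBiA-formula} from Proposition~\ref{proposition:Monk-refined-1}, which states
\[
c^{\mathcal{B}}_{i,\mathcal{A}} = \bigl(p_i(w_{\mathcal{B}}) - p_i(w_{\mathcal{A}})\bigr)\,\frac{p_{v_{\mathcal{A}}}(w_{\mathcal{B}})}{p_{v_{\mathcal{B}}}(w_{\mathcal{B}})}.
\]
Since $H^*_{S^1}(\pt) \cong \C[t]$ is an integral domain and $p_{v_{\mathcal{B}}}(w_{\mathcal{B}}) \neq 0$ by Corollary~\ref{cor:pvAwA}, it suffices to establish, in each of the two cases, that the numerator $p_i(w_{\mathcal{B}}) - p_i(w_{\mathcal{A}})$ vanishes. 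The preceding lemma (equation~\eqref{eq:piwA}) gives an explicit closed form for $p_i(w_{\mathcal{C}})$ at any fixed point $w_{\mathcal{C}}$, so I simply evaluate both terms and compare.

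In the first case, $i \notin \mathcal{B}$. Because $\mathcal{A} \subsetneq \mathcal{B}$, this forces $i \notin \mathcal{A}$ as well, and the preceding lemma immediately gives $p_i(w_{\mathcal{A}}) = 0 = p_i(w_{\mathcal{B}})$. In the second case, $i \in \mathcal{B}$ and the hypothesis $\toe{\mathcal{B}}{i} = \toe{\mathcal{A}}{i},\ \head{\mathcal{B}}{i} = \head{\mathcal{A}}{i}$ forces $i \in \mathcal{A}$ (otherwise $\toe{\mathcal{A}}{i}$ and $\head{\mathcal{A}}{i}$ would not even be defined). Applying \eqref{eq:piwA} twice and invoking the equality of head and tail indices yields
\[
p_i(w_{\mathcal{A}}) = \bigl(\head{\mathcal{A}}{i} - i + 1\bigr)\bigl(i - \toe{\mathcal{A}}{i} + 1\bigr)\,t = \bigl(\head{\mathcal{B}}{i} - i + 1\bigr)\bigl(i - \toe{\mathcal{B}}{i} + 1\bigr)\,t = p_i(w_{\mathcal{B}}),
\]
so the difference again vanishes.

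This lemma is not expected to present any real obstacle; the entire argument is a short deduction from \eqref{eq:cBiA-formula} and \eqref{eq:piwA}. The only conceptual point worth emphasizing is that the formula \eqref{eq:piwA} depends on $\mathcal{C}$ only through $\toe{\mathcal{C}}{i}$ and $\head{\mathcal{C}}{i}$, so passing from $\mathcal{A}$ to $\mathcal{B} = \mathcal{A} \cup \{k\}$ changes $p_i$ precisely when $k$ touches the maximal consecutive substring containing $i$; the two hypotheses of the lemma are exactly the two ways this can fail to happen.
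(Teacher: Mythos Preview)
Your proof is correct and follows essentially the same approach as the paper: both arguments use Equation~\eqref{eq:cBiA-formula} and reduce to showing $p_i(w_{\mathcal{B}}) = p_i(w_{\mathcal{A}})$ in each case via the preceding lemma. Your version is slightly more explicit in writing out the application of \eqref{eq:piwA} in the second case, but the content is identical.
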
 

\begin{proof} 
In the first case $i \not \in {\mathcal{B}}$ and so $i \not \in {\mathcal{A}}$; hence both $p_i(w_{\mathcal{B}}) = 0$ and $p_i(w_{\mathcal{A}}) = 0$.
In the second case $p_i(w_{\mathcal{A}}) = p_i(w_{\mathcal{B}})$.
The claim now follows from Equation~\eqref{eq:cBiA-formula}. 
\end{proof}

The above lemma suggests that the information needed to compute
$c^{\mathcal{B}}_{i,\mathcal{A}}$ is contained in the
maximal consecutive substring of $\mathcal{B}$ containing $i$, and
that we should be able to ``ignore'' all other maximal consecutive
substrings of $\mathcal{B}$.  The next two lemmas make this idea
precise. We call two disjoint consecutive strings
\textbf{adjacent} if their union is again a consective string. The
next lemma asserts that if two disjoint subsets $\mathcal{B},
\mathcal{B}'$ contain no adjacent maximal consecutive substrings, then
the Peterson Schubert class corresponding to
$\mathcal{B} \cup \mathcal{B}'$ is simply the product of the
classes corresponding to $\mathcal{B}$ and $\mathcal{B}'$ respectively.

\begin{lemma}\label{disjoint product}
Let ${\mathcal{B}}$ and ${\mathcal{B}}'$ be disjoint subsets of
$\{1,2,\ldots,n-1\}$. Suppose that $\mathcal{B}$ and $\mathcal{B}'$ contain
{\em no} adjacent maximal consecutive substrings, i.e. there
exists 
no $j \in {\mathcal{B}}, j' \in {\mathcal{B}}'$ such that $\abs{j-j'}
= 1$. Then 
\begin{equation}\label{eq:pvBcupBprime}
p_{v_{{\mathcal{B}} \cup {\mathcal{B}}'}} =
p_{v_{\mathcal{B}}}p_{v_{{\mathcal{B}}'}}. 
\end{equation}
\end{lemma}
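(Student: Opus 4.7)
The strategy is to verify the identity~\eqref{eq:pvBcupBprime} pointwise on $S^1$-fixed points, using the fact that the restriction map $\imath^*: H^*_{S^1}(Y) \hookrightarrow H^*_{S^1}(Y^{S^1})$ from Theorem~\ref{theorem:injectivity} is an injective ring homomorphism. It therefore suffices to prove that for every $\mathcal{C} \subseteq \{1,2,\ldots, n-1\}$,
\[
p_{v_{\mathcal{B}}}(w_{\mathcal{C}}) \cdot p_{v_{\mathcal{B}'}}(w_{\mathcal{C}}) \; = \; p_{v_{\mathcal{B} \cup \mathcal{B}'}}(w_{\mathcal{C}})
\]
as elements of $\C[t]$. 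If $\mathcal{B} \cup \mathcal{B}' \not\subseteq \mathcal{C}$, then at least one of $\mathcal{B}, \mathcal{B}'$ fails to be contained in $\mathcal{C}$, so the upper-triangularity~\eqref{eq:pvA-at-wB} of Theorem~\ref{theorem:pvA-basis} forces both sides to vanish.

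For $\mathcal{B} \cup \mathcal{B}' \subseteq \mathcal{C}$, I substitute the explicit expression of Proposition~\ref{prop:number-of-ways} into each factor; since $\mathcal{B}$ and $\mathcal{B}'$ are disjoint, the product $\prod_{j \in \mathcal{B}}(j - \toe{\mathcal{C}}{j}+1) \cdot \prod_{j \in \mathcal{B}'}(j - \toe{\mathcal{C}}{j}+1)$ collapses to $\prod_{j \in \mathcal{B} \cup \mathcal{B}'}(j - \toe{\mathcal{C}}{j}+1)$ and the powers of $t$ match. The identity reduces to the purely combinatorial statement
\[
N(\mathcal{B} \cup \mathcal{B}', \mathcal{C}) \; = \; N(\mathcal{B}, \mathcal{C}) \cdot N(\mathcal{B}', \mathcal{C}),
\]
where $N(\mathcal{A}, \mathcal{C})$ denotes the number of subwords of the fixed reduced-word decomposition~\eqref{eq:wA-reduced-word} of $w_{\mathcal{C}}$ that realize $v_{\mathcal{A}}$.

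I plan to prove this multiplicativity via an explicit bijection. The non-adjacency hypothesis ensures that every $s_j$ with $j \in \mathcal{B}$ commutes with every $s_{j'}$ with $j' \in \mathcal{B}'$, so the parabolic subgroups $\langle s_j : j \in \mathcal{B}\rangle$ and $\langle s_{j'} : j' \in \mathcal{B}'\rangle$ commute elementwise and intersect trivially; in particular $v_{\mathcal{B} \cup \mathcal{B}'} = v_{\mathcal{B}} v_{\mathcal{B}'}$ with lengths adding by Fact~\ref{fact:vA-length}. Decomposing $v_{\mathcal{B}}, v_{\mathcal{B}'}, v_{\mathcal{B} \cup \mathcal{B}'}$ into maximal-consecutive-substring factors and invoking Fact~\ref{fact:vA-reduced-word-unique}, each of their reduced words uses the letter $s_j$ exactly once for each $j$ in the corresponding index set. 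Given a pair $(P,P')$ of subwords of $w_{\mathcal{C}}$ realizing $v_{\mathcal{B}}$ and $v_{\mathcal{B}'}$, the positions of $P$ and $P'$ in $w_{\mathcal{C}}$ carry disjoint letter labels and are therefore automatically disjoint, so $Q := P \cup P'$ is a well-defined subword; commuting all $\mathcal{B}'$-letters past all $\mathcal{B}$-letters shows that the product of $Q$ in position order equals $v_{\mathcal{B}} v_{\mathcal{B}'} = v_{\mathcal{B} \cup \mathcal{B}'}$. Conversely, any subword $Q$ of $w_{\mathcal{C}}$ realizing $v_{\mathcal{B} \cup \mathcal{B}'}$ contains a unique position labelled $s_j$ for each $j \in \mathcal{B} \cup \mathcal{B}'$; partitioning these positions by whether $j \in \mathcal{B}$ or $j \in \mathcal{B}'$ yields candidate $P, P'$, and uniqueness of decomposition in the internal direct product $\langle s_j : j \in \mathcal{B}\rangle \times \langle s_{j'} : j' \in \mathcal{B}'\rangle$ forces $P$ and $P'$ to realize $v_{\mathcal{B}}$ and $v_{\mathcal{B}'}$. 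The main technical point, which I expect to be the principal obstacle, is to track carefully that these two operations really are mutual inverses at the level of \emph{ordered} position sequences within the specific reduced word~\eqref{eq:wA-reduced-word} of $w_{\mathcal{C}}$, and not merely at the level of abstract group elements; once this bookkeeping is settled, the multiplicativity of $N$ and hence the lemma follow.
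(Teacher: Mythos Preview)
Your proposal is correct and follows essentially the same approach as the paper's proof: verify the identity fixed-point by fixed-point, handle the vanishing case via upper-triangularity, and for the nonvanishing case reduce to the multiplicativity $N(\mathcal{B}\cup\mathcal{B}',\mathcal{C})=N(\mathcal{B},\mathcal{C})\cdot N(\mathcal{B}',\mathcal{C})$ via the same commutativity-based bijection between subwords. Your version is slightly more streamlined in that you invoke Proposition~\ref{prop:number-of-ways} up front to separate the counting factor from the summand factor, whereas the paper establishes the bijection and then re-derives the factorization of each individual Billey summand; the content is the same.
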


\begin{proof}
We prove that for all $\mathcal{A} \subseteq \{1,2,\ldots,n-1\}$ the restrictions
in~\eqref{eq:pvBcupBprime} agree at $w_{\mathcal{A}}$:
\[p_{v_{{\mathcal{B}} \cup {\mathcal{B}}'}}(w_{\mathcal{A}}) =
p_{v_{\mathcal{B}}}(w_{\mathcal{A}}) \cdot p_{v_{{\mathcal{B}}'}}(w_{\mathcal{A}}). 
 \]
We take cases. Suppose
  $\mathcal{B} \cup \mathcal{B}' \not \subseteq \mathcal{A}$, which
  implies $\mathcal{B} \not \subseteq \mathcal{A}$ or
  $\mathcal{B}' \not \subseteq \mathcal{A}$. By the
  upper-triangularity property of Peterson Schubert classes, both the
  right and left sides of Equation~\eqref{eq:pvBcupBprime} are zero.
  Hence the equality holds. 

Now let $\mathcal{B} \cup \mathcal{B}' \subseteq \mathcal{A}$. 
 If $[b_i,b_{i+1}] \subseteq {\mathcal{B}} \cup {\mathcal{B}}'$
    is a maximal consecutive substring then every reduced word for 
    $v_{{\mathcal{B}} \cup {\mathcal{B}}'}$ contains 
$v_{[b_i, b_{i+1}]}$ as a reduced subword by 
definition of  $v_{{\mathcal{B}} \cup {\mathcal{B}}'}$.  
(Fact~\ref{fact:vA-reduced-word-unique} tells us that there is a unique reduced
word for $v_{[b_i, b_{i+1}]}$.)  No two distinct maximal consecutive strings
 $[b_i,b_{i+1}]$ and $[b_j, b_{j+1}]$ are adjacent in ${\mathcal{B}} \cup {\mathcal{B}}'$
 so all simple transpositions in $v_{[b_i, b_{i+1}]}$ commute with 
 all simple transpositions in $v_{[b_j, b_{j+1}]}$.  Comparing lengths of the permutations,
 we conclude that each reduced word for $v_{{\mathcal{B}} \cup {\mathcal{B}}'}$
 can be partitioned into a unique subword that equals  $v_{\mathcal{B}}$ and a unique 
 subword that equals $v_{{\mathcal{B}'}}$.
 
Let $\mathbf{b}$ be the reduced word for $w_{{\mathcal{A}}}$ given
  in~\eqref{eq:wA-reduced-word}.  The previous discussion implies that
  $b_{j_1} b_{j_2} \cdots b_{j_{|{\mathcal{B}}| + |{\mathcal{B}}'|}}$
  is a reduced subword of $\mathbf{b}$ that equals $v_{{\mathcal{B}}
    \cup {\mathcal{B}}'}$  if and only if  
    $b_{j_1} b_{j_2} \cdots b_{j_{|{\mathcal{B}}| + |{\mathcal{B}}'|}}$ contains exactly one subword
$b_{k_1} b_{k_2} \cdots b_{k_{|{\mathcal{B}}|}}$ that equals
$v_{\mathcal{B}}$ and exactly one subword $b_{k_1'} b_{k_2'} \cdots
b_{k_{|{\mathcal{B}}'|}'}$ that equals $v_{{\mathcal{B}}'}$. 
Conversely, the product (in the ordering induced from
$\mathbf{b}$) of each pair of reduced subwords $b_{k_1} b_{k_2} \cdots
b_{k_{|{\mathcal{B}}|}} = v_{\mathcal{B}}$ and $b_{k_1'} b_{k_2'}
\cdots b_{k_{|{\mathcal{B}}'|}'} = v_{{\mathcal{B}}'}$ of $\mathbf{b}$
is a reduced subword $b_{j_1} b_{j_2} \cdots
b_{j_{|{\mathcal{B}}| + |{\mathcal{B}}'|}}$ of $\mathbf{b}$ equalling
$v_{{\mathcal{B}} \cup {\mathcal{B}}'}$.  
This implies that the number of terms in Billey's formula for
$p_{v_{{\mathcal{B}} \cup {\mathcal{B}}'}}(w_{\mathcal{A}})$ is
precisely the product of the number of terms in Billey's formula for
$p_{v_{\mathcal{B}}}(w_{\mathcal{A}})$ and
$p_{v_{{\mathcal{B}}'}}(w_{\mathcal{A}})$.  By
Proposition~\ref{prop:number-of-ways}, we need only show that each summand in 
Billey's formula for $p_{v_{{\mathcal{B}} \cup {\mathcal{B}}'}}(w_{\mathcal{A}})$ is the product 
of a summand in Billey's formula for $p_{v_{\mathcal{B}}}(w_{\mathcal{A}})$ and another for
$p_{v_{\mathcal{B}'}}(w_{\mathcal{A}})$. 

Using Lemma~\ref{lemma:length-of-roots} and the above discussion, we conclude
that the summand in Billey's formula for $p_{v_{{\mathcal{B}} \cup {\mathcal{B}}'}}(w_{\mathcal{A}})$ corresponding to $b_{j_1} b_{j_2} \cdots b_{j_{|{\mathcal{B}}| + |{\mathcal{B}}'|}}$ is 
\begin{equation*}
\prod_{i=1}^{|{\mathcal{B}}|+|{\mathcal{B}}'|}
\pi_{S^1}(r({j_i},\mathbf{b})) = \prod_{i \in {\mathcal{B}} \cup
  {\mathcal{B}}'} (i-\toe{{\mathcal{B}} \cup {\mathcal{B}}'}{i} + 1).
\end{equation*}
Since $\mathcal{B}, \mathcal{B}'$ contain no adjacent maximal
consecutive strings, for any \(i \in \mathcal{B} \cup \mathcal{B'},\)
precisely one of the following hold: either \(i \in \mathcal{B}\) and
\(\toe{\mathcal{B} \cup \mathcal{B}'}{i} = \toe{\mathcal{B}}{i}\) or
\(i \in {\mathcal{B}}'\) and $\toe{{\mathcal{B}} \cup
  {\mathcal{B}}'}{i} = \toe{{\mathcal{B}}'}{i}$.  Hence we may compute 
\begin{equation*}
\begin{split}
\prod_{i \in {\mathcal{B}} \cup
  {\mathcal{B}}'} (i-\toe{{\mathcal{B}} \cup {\mathcal{B}}'}{i} + 1)
&= \prod_{i \in {\mathcal{B}}} (i-\toe{{\mathcal{B}}}{i} + 1)\prod_{i \in
  {\mathcal{B}}'} (i-\toe{{\mathcal{B}}'}{i} + 1) \\
&= \prod_{i=1}^{|{\mathcal{B}}|} \pi_{S^1}(r({k_i},\mathbf{b}))
\prod_{i=1}^{|{\mathcal{B}}'|} \pi_{S^1}(r(k_i',\mathbf{b})). 
\end{split}
\end{equation*} 
Hence each summand in Billey's formula for the left
side of~\eqref{eq:pvBcupBprime} may be written as a product of a
summand in Billey's formula for
$p_{v_{\mathcal{B}}}(w_{\mathcal{A}})$ and another for
$p_{v_{\mathcal{B}'}}(w_{\mathcal{A}})$. The claim follows.
\end{proof}

As observed in Section~\ref{subsec:torus-fixed-pts}, any subset of
$\{1,2,\ldots,n-1\}$ decomposes into a series of
non-adjacent maximal consecutive substrings.  The above lemma
indicates that the Peterson Schubert class
associated to each set $\mathcal{A}$ may be computed in terms of the classes
corresponding to its maximal consecutive substrings.  This allows us
to derive the following simplification of one of the expressions
appearing in Equation~\eqref{eq:cBiA-formula}.

\begin{lemma}
  Suppose ${\mathcal{B}} \subseteq \{1,2,\ldots,n-1\}$ is a disjoint
  union ${\mathcal{A}} \cup \{k\}$.  Then
 \[
\frac{p_{v_{\mathcal{A}}}(w_{\mathcal{B}})}{p_{v_{\mathcal{B}}}(w_{\mathcal{B}})}
=  \frac{p_{v_{[\toe{{\mathcal{B}}}{k}, k-1]}}(w_{\mathcal{B}})
  p_{v_{[k+1,\head{{\mathcal{B}}}{k}]}}(w_{\mathcal{B}})}{p_{v_{[\toe{{\mathcal{B}}}{k},
      \head{{\mathcal{B}}}{k}]}}(w_{\mathcal{B}})} =
\frac{p_{v_{[\toe{{\mathcal{B}}}{k}, k-1] \cup
      [k+1,\head{{\mathcal{B}}}{k}]}}(w_{\mathcal{B}})}{p_{v_{[\toe{{\mathcal{B}}}{k},
      \head{{\mathcal{B}}}{k}]}}(w_{\mathcal{B}})}.
\]
\end{lemma}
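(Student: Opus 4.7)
The plan is to decompose both $\mathcal{A}$ and $\mathcal{B}$ into their maximal consecutive substrings, apply the ``disjoint product'' Lemma~\ref{disjoint product} to factor the relevant Peterson Schubert classes, and then cancel a common nonzero factor. The key observation is that the only maximal consecutive substring of $\mathcal{B}$ that differs from those of $\mathcal{A}$ is $[\toe{\mathcal{B}}{k}, \head{\mathcal{B}}{k}]$, which in passing from $\mathcal{B}$ to $\mathcal{A}$ splits into (possibly empty) pieces $[\toe{\mathcal{B}}{k}, k-1]$ and $[k+1, \head{\mathcal{B}}{k}]$.

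More precisely, I would set $t := \toe{\mathcal{B}}{k}$ and $h := \head{\mathcal{B}}{k}$, and let $\mathcal{A}'$ denote the union of all maximal consecutive substrings of $\mathcal{B}$ other than $[t,h]$. Because $[t,h]$ is maximal in $\mathcal{B}$, we have $t-1, h+1 \notin \mathcal{B}$, so $\mathcal{A}'$ contains no element adjacent to any element of $[t,h]$, and hence also none adjacent to $[t,k-1] \cup [k+1,h]$. One then verifies the disjoint decompositions
\[
\mathcal{B} = \mathcal{A}' \sqcup [t,h], \qquad \mathcal{A} = \mathcal{A}' \sqcup [t,k-1] \sqcup [k+1,h],
\]
where empty ``strings'' (when $k=t$ or $k=h$) contribute the identity permutation and the class $1$.

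Applying Lemma~\ref{disjoint product} to each of these disjoint unions (using the non-adjacency just noted, and also that $[t,k-1]$ and $[k+1,h]$ are mutually non-adjacent since they are separated by $k$) yields the identities
\[
p_{v_{\mathcal{B}}} = p_{v_{\mathcal{A}'}} \cdot p_{v_{[t,h]}}, \qquad p_{v_{\mathcal{A}}} = p_{v_{\mathcal{A}'}} \cdot p_{v_{[t,k-1]}} \cdot p_{v_{[k+1,h]}}
\]
in $H^*_{S^1}(Y)$. Evaluating at $w_{\mathcal{B}}$ and using $\mathcal{A}' \subseteq \mathcal{B}$, Proposition~\ref{prop:number-of-ways} and Corollary~\ref{cor:pvAwA-nonzero} together with the Graham-positivity of Billey's formula ensure $p_{v_{\mathcal{A}'}}(w_{\mathcal{B}}) \neq 0$, so we may cancel this common factor on the right-hand side. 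The resulting identity is exactly the desired formula, with the final equality coming from a second application of Lemma~\ref{disjoint product} to the non-adjacent union $[t,k-1] \cup [k+1,h]$.

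The main obstacle is the bookkeeping: one must carefully justify the non-adjacency hypotheses needed to invoke Lemma~\ref{disjoint product} in each of the three places, and handle gracefully the degenerate cases in which either $[t,k-1]$ or $[k+1,h]$ is empty (so that the corresponding $v_{[\cdot,\cdot]}$ is the identity and contributes a trivial factor $1$). Nothing else is really in doubt, since the nonvanishing of $p_{v_{\mathcal{A}'}}(w_{\mathcal{B}})$ follows directly from results already established in the preceding section.
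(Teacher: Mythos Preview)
Your proposal is correct and follows essentially the same approach as the paper: decompose $\mathcal{A}$ and $\mathcal{B}$ into maximal consecutive substrings, apply Lemma~\ref{disjoint product} to factor the Peterson Schubert classes, and cancel the common factor coming from the substrings not containing $k$. One small notational caution: avoid writing $t := \toe{\mathcal{B}}{k}$, since $t$ already denotes the generator of $H^*_{S^1}(\pt)$ in this paper.
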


\begin{proof}
Suppose that ${\mathcal{A}}$ decomposes into maximal consecutive substrings as
\[
{\mathcal{A}} = [\toe{{\mathcal{B}}}{k}, k-1] \cup [k+1,\head{{\mathcal{B}}}{k}] \cup  [a_1, a_2] \cup [a_3, a_4] \cup \cdots \cup [a_{m-1},a_m]
\]
and that ${\mathcal{B}}$ decomposes into maximal consecutive substrings as
\[
{\mathcal{B}} = [\toe{{\mathcal{B}}}{k}, \head{{\mathcal{B}}}{k}] \cup [a_1, a_2] \cup [a_3, a_4] \cup \cdots \cup [a_{m-1},a_m].
\]
The previous lemma showed that
\[
p_{v_{\mathcal{B}}}(w_{\mathcal{B}}) = p_{v_{[\toe{{\mathcal{B}}}{k},
    \head{{\mathcal{B}}}{k}]}}(w_{\mathcal{B}}) \cdot p_{v_{[a_1,
    a_2]}}(w_{\mathcal{B}}) \cdot p_{v_{[a_3, a_4]}}(w_{\mathcal{B}}) \cdots p_{v_{[a_{m-1}, a_m]}}(w_{\mathcal{B}})
\]
and similarly for $p_{v_{\mathcal{A}}}$.  The claim follows.
\end{proof}

As a consequence of the above, for the purposes of computing the right
hand side of Equation~\eqref{eq:cBiA-formula}, 
we may assume without loss of generality that $\mathcal{B}$ consists
of a single consecutive string $[\toe{\mathcal{B}}{k},
\head{\mathcal{B}}{k}]$ 
and $\mathcal{A} = \mathcal{B} \setminus \{k\}$) for any $k \in \mathcal{B}$. 
We can now give a combinatorial and explicit
expression for both factors in Equation~\eqref{eq:cBiA-formula}.

\begin{lemma}\label{lemma:quotient-of-pv}
Let ${\mathcal{A}} = [\toe{{\mathcal{B}}}{k}, k-1] \cup [k+1,\head{{\mathcal{B}}}{k}]$ and ${\mathcal{B}} =  [\toe{{\mathcal{B}}}{k}, \head{{\mathcal{B}}}{k}]$.
Then 
\[
p_{v_{\mathcal{A}}}(w_{\mathcal{B}}) = \frac{ (\head{{\mathcal{B}}}{k}-\toe{{\mathcal{B}}}{k}+1)!}{k-\toe{{\mathcal{B}}}{k}+1}
\binom{\head{\mathcal{B}}{k} - \toe{\mathcal{B}}{k}+1}{k-\toe{\mathcal{B}}{k}}
t^{\head{{\mathcal{B}}}{k}-\toe{{\mathcal{B}}}{k}}.
\]
In particular, 
\begin{equation}\label{eq:pvAwB-over-pvBwB}
\frac{p_{v_{\mathcal{A}}}(w_{\mathcal{B}})}{p_{v_{\mathcal{B}}}(w_{\mathcal{B}})}
= \frac{1}{k-\toe{{\mathcal{B}}}{k}+1} \cdot
\binom{\head{\mathcal{B}}{k} - \toe{\mathcal{B}}{k}+1}{k-\toe{\mathcal{B}}{k}}
\frac{1}{t}.
\end{equation}
\end{lemma}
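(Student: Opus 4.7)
The plan is to compute $p_{v_{\mathcal{A}}}(w_{\mathcal{B}})$ directly via Proposition~\ref{prop:number-of-ways}: this writes the restriction as the product of (the number of reduced subwords of $w_{\mathcal{B}}$ equal to $v_{\mathcal{A}}$) with (the common value of each summand in Billey's formula). For brevity write $s := \toe{\mathcal{B}}{k}$ and $h := \head{\mathcal{B}}{k}$, so $\mathcal{B} = [s,h]$ and $\mathcal{A} = [s,k-1] \cup [k+1,h]$. Since $\toe{\mathcal{B}}{j} = s$ for every $j \in \mathcal{B}$, a direct calculation of the common summand gives
\[
\prod_{j \in \mathcal{A}} (j - \toe{\mathcal{B}}{j} + 1) \;=\; \prod_{j=s}^{k-1}(j-s+1) \cdot \prod_{j=k+1}^{h}(j-s+1) \;=\; (k-s)! \cdot \frac{(h-s+1)!}{(k-s+1)!} \;=\; \frac{(h-s+1)!}{k-s+1},
\]
while $|\mathcal{A}| = h-s$. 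What remains is the subword count.

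The main combinatorial step is that count. The two maximal consecutive substrings $[s,k-1]$ and $[k+1,h]$ of $\mathcal{A}$ are non-adjacent (their closest elements differ by $2$), so Lemma~\ref{disjoint product} factors the Peterson Schubert class as $p_{v_{\mathcal{A}}} = p_{v_{[s,k-1]}} \cdot p_{v_{[k+1,h]}}$, whence
\[
p_{v_{\mathcal{A}}}(w_{\mathcal{B}}) \;=\; p_{v_{[s,k-1]}}(w_{\mathcal{B}}) \cdot p_{v_{[k+1,h]}}(w_{\mathcal{B}}).
\]
By Fact~\ref{fact:vA-reduced-word-unique}, each of $v_{[s,k-1]}$ and $v_{[k+1,h]}$ admits a unique reduced-word decomposition, so applying Proposition~\ref{prop:number-of-ways} to each factor reduces the problem to counting subwords of the staircase reduced word~\eqref{eq:wjk-reduced-word} of $w_{[s,h]}$ equal to the specific sequences $s_s s_{s+1} \cdots s_{k-1}$ and $s_{k+1} s_{k+2} \cdots s_h$ respectively.

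I would carry out these counts by viewing the reduced word for $w_{[s,h]}$ from~\eqref{eq:wjk-reduced-word} as a staircase with $h-s+1$ rows, where row $r$ (for $1\le r\le h-s+1$) consists of $s_s, s_{s+1}, \ldots, s_{h-r+1}$ read left to right. A subword equal to $s_s s_{s+1} \cdots s_{k-1}$ then amounts to a choice of row $r_j$ for each $j \in [s,k-1]$, subject only to the weak monotonicity $r_s \le r_{s+1} \le \cdots \le r_{k-1}$ (so that the chosen positions appear in left-to-right order) and the existence constraint $r_j \le h-j+1$ (so that $s_j$ really does occur in row $r_j$). The substitution $r_j' := r_j + (j-s)$ converts these into strictly increasing sequences in $\{1,2,\ldots, h-s+1\}$ of length $k-s$, so the count is $\binom{h-s+1}{k-s}$. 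The same analysis applied to $s_{k+1} \cdots s_h$ forces $r_h = 1$ and hence $r_j = 1$ for all $j$, giving exactly one subword. Substituting back into Proposition~\ref{prop:number-of-ways} and the factorization yields the first displayed formula. The second equation follows immediately by dividing by $p_{v_{\mathcal{B}}}(w_{\mathcal{B}}) = (h-s+1)!\, t^{h-s+1}$, which is Corollary~\ref{cor:pvAwA} applied to $\mathcal{B} = [s,h]$. The only subtle points are the boundary cases $k = s$ or $k = h$, where one of the two substrings of $\mathcal{A}$ is empty; these are handled trivially since an empty product is $1$ and $v_\emptyset$ is the identity.
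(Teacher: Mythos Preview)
Your proof is correct and follows essentially the same route as the paper: compute the common summand via Proposition~\ref{prop:number-of-ways}, then count reduced subwords of the staircase word $w_{[s,h]}$ equal to each of $v_{[s,k-1]}$ and $v_{[k+1,h]}$ separately. The only cosmetic differences are that you justify the factorization of the subword count by invoking Lemma~\ref{disjoint product} (the paper argues it directly from commutativity of the two blocks of simple transpositions), and you phrase the count for $v_{[s,k-1]}$ via the substitution $r_j' = r_j + (j-s)$ to a choice of a strictly increasing $(k-s)$-tuple in $\{1,\ldots,h-s+1\}$, whereas the paper phrases the same count as ordered partitions of $h-k+2$ into $k-s$ nonnegative parts; both yield $\binom{h-s+1}{k-s}$.
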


\begin{proof} 
We apply Billey's formula to compute
$p_{v_{\mathcal{A}}}(w_{\mathcal{B}})$. 
Recall that
\[
v_{\mathcal{B}} := s_{\toe{{\mathcal{B}}}{k}} s_{\toe{{\mathcal{B}}}{k}+1} \cdots s_{\head{{\mathcal{B}}}{k}} \textup{  and  } 
v_{\mathcal{A}} := s_{\toe{{\mathcal{B}}}{k}} s_{\toe{{\mathcal{B}}}{k}+1} \cdots s_{k-1} \widehat{s_k} s_{k+1} \cdots s_{\head{{\mathcal{B}}}{k}}. 
\]
By Lemma~\ref{lemma:summands-equal}, we conclude that each
summand in Billey's formula for $p_{v_{\mathcal{A}}}(w_{\mathcal{B}})$ equals 
\[
\frac{(\head{{\mathcal{B}}}{k}-\toe{{\mathcal{B}}}{k}+1)!}{k-\toe{{\mathcal{B}}}{k}+1}
\hspace{1.5mm} t^{\head{{\mathcal{B}}}{k}-\toe{{\mathcal{B}}}{k}}. 
\]

By Proposition~\ref{prop:number-of-ways}, we need next to compute the
number of distinct ways that $v_{\mathcal{A}}$ appears as a reduced subword of
$w_{\mathcal{B}}$. First, by construction, the
element $v_{\mathcal{A}}$ is equal to 
\begin{equation}\label{eq:twofactors}
v_{[\toe{{\mathcal{B}}}{k}, k-1]} \cdot v_{[k+1, \head{{\mathcal{B}}}{k}]}.
\end{equation}
(By definition $v_{\emptyset} = 1$.) 
Moreover, both factors appear in
every reduced-word decomposition of $v_{\mathcal{A}}$ and each factor
has a \textit{unique} reduced word decomposition
(see Fact~\ref{fact:vA-reduced-word-unique}),
in which the indices in $\{\toe{{\mathcal{B}}}{k}, \ldots, k-1\}$ are listed in increasing order, as
are the indices in $\{k+1, \ldots, \head{{\mathcal{B}}}{k}\}$. 
Since the two factors correspond to non-adjacent maximal consecutive
strings, each simple transposition appearing in
$v_{[\toe{{\mathcal{B}}}{k}, k-1]}$ commutes with each such of $v_{[k+1,
 \head{{\mathcal{B}}}{k}]}$. Hence the set of reduced-word
decompositions of $v_{\mathcal{A}}$ are in bijective correspondence with orderings
of the set ${\mathcal{A}}$ such that the elements $\{\toe{{\mathcal{B}}}{k}, \ldots,
k-1\}$ appear in increasing order, as do the elements $\{k+1, \ldots,
\head{{\mathcal{B}}}{k}\}$. 

Let $\mathbf{b}$ be the reduced word decomposition for
$w_{\mathcal{B}}$ given by~\eqref{eq:wA-reduced-word}. We wish to find
subwords of $\mathbf{b}$ 
which equal $v_{\mathcal{A}}$. The
index $\head{{\mathcal{B}}}{k}$ appears only once, and as
observed above, the
indices $\{k+1, \ldots, \head{{\mathcal{B}}}{k}\}$ must appear in
increasing order.  We conclude 
that there is only one subword of $\mathbf{b}$ which 
equals \(v_{\{k+1, \ldots, \head{{\mathcal{B}}}{k}\}}.\) If
$k=\toe{{\mathcal{B}}}{k}$ this unique subword determines the
factorization, and the formula of the claim reduces to $1$. 
(In the special case when \(k=\head{{\mathcal{B}}}{k},\) the set
\(\{k+1, \ldots,\head{{\mathcal{B}}}{k}\}\) is empty and this
discussion is vacuous.) 

Suppose \(k > \toe{{\mathcal{B}}}{k}.\) 
Note that the indices $\{\toe{{\mathcal{B}}}{k}, \ldots, k-1\}$ appear in the first $\head{{\mathcal{B}}}{k}-k+2$ factors
of~\eqref{eq:wA-reduced-word} and no others.  A reduced word for $v_{[\toe{{\mathcal{B}}}{k}, k-1]}$ is a choice of the indices  $\{\toe{{\mathcal{B}}}{k}, \ldots, k-1\}$ from any of these factors, in increasing order; in other words, the reduced words for $v_{[\toe{{\mathcal{B}}}{k}, k-1]}$ in $w_{\mathcal{B}}$ correspond bijectively with ordered partitions of $\head{{\mathcal{B}}}{k}-k+2$ into $k-\toe{{\mathcal{B}}}{k}$ nonnegative parts.  This is given by the binomial coefficient
\[
\binom{\head{{\mathcal{B}}}{k}-\toe{{\mathcal{B}}}{k} +1}{k-\toe{{\mathcal{B}}}{k}} = \frac{(\head{{\mathcal{B}}}{k}-\toe{{\mathcal{B}}}{k}+1)!}{(k-\toe{{\mathcal{B}}}{k})! (\head{{\mathcal{B}}}{k}-k+1)!}.
\]
By Proposition~\ref{prop:number-of-ways} we conclude 
\[
p_{v_{\mathcal{A}}}(w_{\mathcal{B}}) = \frac{ (\head{{\mathcal{B}}}{k}-\toe{{\mathcal{B}}}{k}+1)!}{k-\toe{{\mathcal{B}}}{k}+1}
\binom{\head{\mathcal{B}}{k} - \toe{\mathcal{B}}{k}+1}{k-\toe{\mathcal{B}}{k}}
t^{\head{{\mathcal{B}}}{k}-\toe{{\mathcal{B}}}{k}}, 
\]
as desired. Formula~\eqref{eq:pvAwB-over-pvBwB} follows
immediately from the above equality and Corollary~\ref{cor:pvAwA}. 
\end{proof} 

\begin{remark}\label{remark:young}
As may be seen from the proof of the lemma above, the number of 
  distinct subwords of the reduced word decomposition $\mathbf{b}$ of
  $w_{\mathcal{B}}$ given in~\eqref{eq:wA-reduced-word} that equal
  $v_{\mathcal{A}}$, is also equal to the number of Young diagrams that fit
  in a \((\head{{\mathcal{B}}}{k}-k+1) \times
  (k-\toe{{\mathcal{B}}}{k})\) box.  We do not know whether this is
  a coincidence or, given the prevalence of Young diagrams in Schubert calculus,
   intrinsic to the product structure of the ring. 
\end{remark}

We proceed with a computation of the rest of Equation~\eqref{eq:cBiA-formula}. Here 
we assume that $i$ satisfies $\toe{{\mathcal{B}}}{k} \leq i \leq
\head{{\mathcal{B}}}{k}$, since otherwise
$c^{\mathcal{B}}_{i,{\mathcal{A}}}$ vanishes by Lemma~\ref{lemma:vanishing-for-pi}.

\begin{lemma}\label{lemma:diff-of-pi}
Let ${\mathcal{A}} = [\toe{{\mathcal{B}}}{k}, k-1] \cup
[k+1,\head{{\mathcal{B}}}{k}]$ and ${\mathcal{B}} =
[\toe{{\mathcal{B}}}{k}, \head{{\mathcal{B}}}{k}]$. Let $i$ be an
index with \(\toe{\mathcal{B}}{k} \leq i \leq \head{\mathcal{B}}{k}.\) 
Then 
  \begin{equation}\label{eq:piwB-minus-piwA}
p_i(w_{\mathcal{B}}) - p_i(w_{\mathcal{A}}) = 
\begin{cases}
(\head{{\mathcal{B}}}{k}-k+1)(i-\toe{{\mathcal{B}}}{k}+1) t & \mbox{if} \; \toe{{\mathcal{B}}}{k} \leq i \leq k-1\textup{ and} \\
(\head{{\mathcal{B}}}{k}-i+1)(k-\toe{{\mathcal{B}}}{k}+1) t & \mbox{if} \; k \leq i \leq \head{{\mathcal{B}}}{k}.
\end{cases}
\end{equation}
\end{lemma}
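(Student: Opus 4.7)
The plan is to prove this by direct computation, applying Equation~\eqref{eq:piwA} to each of $p_i(w_{\mathcal{B}})$ and $p_i(w_{\mathcal{A}})$ separately and then taking the difference. Since the formula~\eqref{eq:piwA} only requires knowing the head and tail of $i$ within each of the relevant subsets, the proof reduces to bookkeeping based on the location of $i$ within the maximal consecutive substrings of $\mathcal{A}$ and $\mathcal{B}$.

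First I would compute $p_i(w_{\mathcal{B}})$. Because $\mathcal{B} = [\toe{{\mathcal{B}}}{k}, \head{{\mathcal{B}}}{k}]$ is a single maximal consecutive string and $i \in \mathcal{B}$ by the hypothesis, we have $\head{{\mathcal{B}}}{i} = \head{{\mathcal{B}}}{k}$ and $\toe{{\mathcal{B}}}{i} = \toe{{\mathcal{B}}}{k}$, so~\eqref{eq:piwA} gives
\[
p_i(w_{\mathcal{B}}) = (\head{{\mathcal{B}}}{k} - i + 1)(i - \toe{{\mathcal{B}}}{k} + 1)\, t.
\]
This single expression will be used in both cases.

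Next I would compute $p_i(w_{\mathcal{A}})$ by splitting into the two cases of the lemma. If $\toe{{\mathcal{B}}}{k} \leq i \leq k-1$ then $i$ lies in the maximal consecutive substring $[\toe{{\mathcal{B}}}{k}, k-1]$ of $\mathcal{A}$, so $\head{{\mathcal{A}}}{i} = k-1$ and $\toe{{\mathcal{A}}}{i} = \toe{{\mathcal{B}}}{k}$, giving $p_i(w_{\mathcal{A}}) = (k - i)(i - \toe{{\mathcal{B}}}{k} + 1)\, t$. If instead $k \leq i \leq \head{{\mathcal{B}}}{k}$, there are two subcases: either $i = k$, in which case $i \notin \mathcal{A}$ and so $p_i(w_{\mathcal{A}}) = 0$ by~\eqref{eq:piwA}; or $k+1 \leq i \leq \head{{\mathcal{B}}}{k}$, in which case $i$ lies in the substring $[k+1, \head{{\mathcal{B}}}{k}]$ of $\mathcal{A}$, so $\head{{\mathcal{A}}}{i} = \head{{\mathcal{B}}}{k}$ and $\toe{{\mathcal{A}}}{i} = k+1$, yielding $p_i(w_{\mathcal{A}}) = (\head{{\mathcal{B}}}{k} - i + 1)(i - k)\, t$. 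A direct check shows the $i=k$ formula agrees with setting $i=k$ in the expression for the subcase $k+1 \leq i \leq \head{{\mathcal{B}}}{k}$ (both give $0$ at $i = k$ only after multiplying, but more importantly the final subtraction below also works uniformly), so in fact the two subcases can be merged.

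Finally I would subtract. In the first range, factoring out $(i - \toe{{\mathcal{B}}}{k} + 1)\, t$ gives
\[
p_i(w_{\mathcal{B}}) - p_i(w_{\mathcal{A}}) = (i - \toe{{\mathcal{B}}}{k} + 1)\, t \cdot \bigl[(\head{{\mathcal{B}}}{k} - i + 1) - (k - i)\bigr] = (\head{{\mathcal{B}}}{k} - k + 1)(i - \toe{{\mathcal{B}}}{k} + 1)\, t,
\]
as claimed. In the second range, factoring out $(\head{{\mathcal{B}}}{k} - i + 1)\, t$ gives
\[
p_i(w_{\mathcal{B}}) - p_i(w_{\mathcal{A}}) = (\head{{\mathcal{B}}}{k} - i + 1)\, t \cdot \bigl[(i - \toe{{\mathcal{B}}}{k} + 1) - (i - k)\bigr] = (\head{{\mathcal{B}}}{k} - i + 1)(k - \toe{{\mathcal{B}}}{k} + 1)\, t,
\]
matching the desired expression. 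There is no substantive obstacle here; the proof is a routine application of~\eqref{eq:piwA} with careful tracking of the head/tail functions on $\mathcal{A}$ versus $\mathcal{B}$, together with handling the boundary case $i = k$ (where $i \notin \mathcal{A}$) so that the formula is uniform within each case.
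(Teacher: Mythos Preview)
Your proof is correct and follows essentially the same approach as the paper: both apply~\eqref{eq:piwA} to $p_i(w_{\mathcal{B}})$ and $p_i(w_{\mathcal{A}})$ separately, split into cases according to the position of $i$ relative to $k$, and subtract. The only cosmetic difference is that you compute $p_i(w_{\mathcal{B}})$ once up front and merge the $i=k$ subcase into the range $k \leq i \leq \head{\mathcal{B}}{k}$, whereas the paper treats $i=k$ as its own case.
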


\begin{proof}
First suppose \(\toe{{\mathcal{B}}}{k}\leq i \leq k-1.\)
Then Equation~\eqref{eq:piwA} yields 
\[
p_i(w_{\mathcal{A}}) = (i-\toe{{\mathcal{B}}}{k}+1)(k-1-i+1)t \quad \textup{ and }
\quad
p_i(w_{\mathcal{B}}) = (i-\toe{{\mathcal{B}}}{k}+1)(\head{{\mathcal{B}}}{k}-i+1)t,
\]
hence we have, as desired,
\[
p_i(w_{\mathcal{B}}) - p_i(w_{\mathcal{A}}) 
 = (\head{{\mathcal{B}}}{k}-k+1)(i -\toe{{\mathcal{B}}}{k}+1)t.
\]
Now suppose \(i=k.\) Since $k \not \in {\mathcal{A}}$ the transposition
$s_k$ never appears in $w_{\mathcal{A}}$.  Thus we have 
\[
p_k(w_{\mathcal{A}}) = 0,
\]
and we compute
\[
p_k(w_{\mathcal{B}}) = (\head{{\mathcal{B}}}{k}-k+1)(k-\toe{{\mathcal{B}}}{k}+1)t,
\]
which also agrees with Equation~\eqref{eq:piwB-minus-piwA}. Finally suppose
\(k+1 \leq i \leq \head{{\mathcal{B}}}{k}.\) Then 
\[
p_i(w_{\mathcal{A}}) = (\head{{\mathcal{B}}}{k}-i+1)(i-k-1+1)
\quad
\mbox{and}
\quad
p_i(w_{\mathcal{B}}) = (\head{{\mathcal{B}}}{k}-i+1)(i-\toe{{\mathcal{B}}}{k}+1)t,
\]
so as desired
\[
p_i(w_{\mathcal{B}})-p_i(w_{\mathcal{A}}) = 
(\head{{\mathcal{B}}}{k}-i+1)(k-\toe{{\mathcal{B}}}{k}+1)t.
\]
\end{proof}

We may now state and prove our main theorem, \textbf{the
  $S^1$-equivariant Chevalley-Monk formula
  for type $A$ Peterson varieties}, which gives a ``manifestly
positive'' combinatorial formula for the non-negative, integral
structure constants $c^{\mathcal{B}}_{i,\mathcal{A}}$. We have the
following. 

\begin{theorem}\label{theorem:Monk} ({\bf ``The $S^1$-equivariant Chevalley-Monk formula for Peterson varieties.''}) 
Let $Y$ be the Peterson variety of type $A_{n-1}$ with the
natural $S^1$-action defined by~\eqref{eq:def-circle}. For ${\mathcal A}
\subseteq \{1,2,\ldots, n-1\}$, let $v_{\mathcal{A}} \in S_n$ be the permutation
given in Definition~\ref{def:vA}, and let $p_{v_{\mathcal{A}}}$ be the
corresponding Peterson Schubert class in $H^*_{S^1}(Y)$. Let $p_i :=
p_{s_i}$ denote the Peterson Schubert class corresponding to the
singleton subset $\{i\}$. Then 
\begin{equation}\label{eq:Monk-final}
p_i \cdot p_{v_{\mathcal{A}}} = p_i(w_{\mathcal{A}}) \cdot 
p_{v_{\mathcal{A}}} +  \sum_{{\mathcal{A}} \subsetneq {\mathcal{B}} \textup{ and } |{\mathcal{B}}|
  = |{\mathcal{A}}|+1} c^{\mathcal{B}}_{i,{\mathcal{A}}} \cdot p_{v_{\mathcal{B}}},
\end{equation}
where, for a subset $\mathcal{B} \subseteq \{1,2,\ldots,n-1\}$ which is a disjoint union
\(\mathcal{B} =
  \mathcal{A} \cup \{k\},\) 
\begin{itemize} 
\item if \(i \not \in \mathcal{B}\) then
  $c^{\mathcal{B}}_{i,\mathcal{A}} = 0$, 
\item if \(i \in {\mathcal{B}}\) and \(i \not \in [\toe{\mathcal{B}}{k},
\head{\mathcal{B}}{k}],\) then $c^{\mathcal{B}}_{i,{\mathcal{A}}} =
0$, 
\item if \(k \leq i \leq \head{\mathcal{B}}{k},\) 
  then 
\begin{equation}\label{eq:cBiA-formula-part1}
c^{\mathcal{B}}_{i,{\mathcal{A}}} = (\head{{\mathcal{B}}}{k}-i+1) \cdot \left( \begin{array}{c} \head{{\mathcal{B}}}{k} - \toe{{\mathcal{B}}}{k}+1 \\
    k-\toe{{\mathcal{B}}}{k} \end{array} \right),
\end{equation}
\item if \(\toe{{\mathcal{B}}}{k} \leq i \leq k-1,\) 
\begin{equation}\label{eq:cBiA-formula-part2}
c^{\mathcal{B}}_{i,{\mathcal{A}}} 
 = (i-\toe{{\mathcal{B}}}{k}+1) \cdot \binom{\head{{\mathcal{B}}}{k}-\toe{{\mathcal{B}}}{k}+1}{k-\toe{{\mathcal{B}}}{k}+1}.
\end{equation}
\end{itemize} 
Moreover $p_i(w_{\mathcal{A}})$ as well as each $c^{\mathcal{B}}_{i,\mathcal{A}}$ is a non-negative
integer. 
\end{theorem}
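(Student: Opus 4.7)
The plan is to assemble the theorem from the explicit lemmas already proved in this section; at this point the statement is essentially a repackaging, with the binomial arithmetic carried out. Proposition \ref{proposition:Monk-refined-1} has already established the structural form of the expansion (5.7): the sum ranges only over $\mathcal{B}$ with $\mathcal{A} \subsetneq \mathcal{B}$ and $|\mathcal{B}| = |\mathcal{A}|+1$, the diagonal coefficient equals $p_i(w_{\mathcal{A}})$, and each off-diagonal coefficient is given by the quotient-difference formula (5.3). So the task reduces to evaluating that formula explicitly in each case.

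First I would dispatch the two vanishing cases by citing Lemma \ref{lemma:vanishing-for-pi} directly, which asserts that $c^{\mathcal{B}}_{i,\mathcal{A}} = 0$ whenever $i \not\in \mathcal{B}$, or whenever $i \in \mathcal{B}$ but $i$ lies outside the maximal consecutive substring of $\mathcal{B}$ containing $k$ (where $\{k\} = \mathcal{B} \setminus \mathcal{A}$). This is immediate from the observation that in both situations $p_i(w_{\mathcal{B}}) = p_i(w_{\mathcal{A}})$.

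For the non-vanishing cases, with $\mathcal{B} = \mathcal{A} \cup \{k\}$ and $i$ in the maximal consecutive substring $[\toe{\mathcal{B}}{k}, \head{\mathcal{B}}{k}]$ containing $k$, I would first use the product decomposition lemma preceding Lemma \ref{lemma:quotient-of-pv} to reduce to the case where $\mathcal{B}$ is itself a single maximal consecutive substring, so that $\mathcal{A} = [\toe{\mathcal{B}}{k}, k-1] \cup [k+1, \head{\mathcal{B}}{k}]$. Then I would substitute the explicit ratio from Lemma \ref{lemma:quotient-of-pv} and the explicit difference from Lemma \ref{lemma:diff-of-pi} into (5.3). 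The factor of $t$ in the difference cancels the factor of $1/t$ in the ratio, producing a pure scalar (as it must, since both sides of the Monk product have matching polynomial degree).

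For the subrange $k \leq i \leq \head{\mathcal{B}}{k}$ this substitution immediately gives the claimed formula (5.8). For $\toe{\mathcal{B}}{k} \leq i \leq k-1$ the substitution produces a binomial coefficient $\binom{\head{\mathcal{B}}{k}-\toe{\mathcal{B}}{k}+1}{k-\toe{\mathcal{B}}{k}}$ together with an extra fractional factor $(\head{\mathcal{B}}{k}-k+1)/(k-\toe{\mathcal{B}}{k}+1)$; applying the elementary identity $\binom{m+1}{r+1} = \tfrac{m-r+1}{r+1}\binom{m+1}{r}$ with $m = \head{\mathcal{B}}{k}-\toe{\mathcal{B}}{k}$ and $r = k - \toe{\mathcal{B}}{k}$ absorbs this factor and yields (5.9). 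Non-negativity and integrality of all structure constants are then manifest from the final binomial-coefficient expressions. There is no genuine obstacle at this stage of the paper: all of the substantive combinatorics (Billey's formula, subword counting, the Young-diagram identification of Remark \ref{remark:young}) has been distilled into Lemmas \ref{lemma:quotient-of-pv} and \ref{lemma:diff-of-pi}, and the only mildly subtle step is the binomial manipulation in the second subcase, which is routine.
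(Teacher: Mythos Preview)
Your proposal is correct and follows essentially the same route as the paper's own proof: invoke Proposition~\ref{proposition:Monk-refined-1} for the structural form and diagonal term, Lemma~\ref{lemma:vanishing-for-pi} for the two vanishing cases, reduce via the product-decomposition lemma to a single consecutive string, and then multiply the expressions from Lemmas~\ref{lemma:quotient-of-pv} and~\ref{lemma:diff-of-pi}. Your explicit mention of the binomial identity $\binom{m+1}{r+1}=\tfrac{m-r+1}{r+1}\binom{m+1}{r}$ is exactly the ``straightforward computation'' the paper leaves to the reader in the $\toe{\mathcal{B}}{k}\le i\le k-1$ subcase.
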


\begin{proof} 
  The product $p_i \cdot p_{v_{\mathcal{A}}}$ in
  $H^*_{S^1}(Y)$ is a linear combination of the
  form~\eqref{eq:Monk-final} by
  Proposition~\ref{proposition:Monk-refined-1}. The first two claims
  about the vanishing of $c^{\mathcal{B}}_{i,\mathcal{A}}$ 
  were shown in Lemma~\ref{lemma:vanishing-for-pi}. 
The latter two claims~\eqref{eq:cBiA-formula-part1} and~\eqref{eq:cBiA-formula-part2} follow from 
straightforward computation using Lemma~\ref{lemma:quotient-of-pv} and 
Lemma~\ref{lemma:diff-of-pi}. Moreover, 
the assumptions on $i$ 
 imply that the first factor appearing in the product on the right
 hand side of~\eqref{eq:cBiA-formula-part1}
 and~\eqref{eq:cBiA-formula-part2}, respectively, is a positive
 integer. Binomial coefficients are also
 positive integers, so we conclude that
 $c^{\mathcal{B}}_{i,\mathcal{A}}$ is always a non-negative
 integer. Finally, the fact that $p_i(w_{\mathcal{A}})$ is positive
 in the sense of Graham follows from Equation~\eqref{eq:piwA},
 or from Graham-positivity of Billey's formula. The
 result follows. 
\end{proof}

We give two fully computed examples. 

\begin{example} 
Continuing Examples~\ref{ex:Billey-summand}
and~\ref{ex:Billey-sum}, suppose $n=7, \mathcal{A}=\{1,2,3,5,6\}$ and
$\mathcal{B}= \{1,2,3,4,5,6\}$.

Suppose first \(i=3.\) Then from~\eqref{eq:piwA} we immediately compute
\[
p_3(w_{\mathcal{A}}) = 3t. 
\]
In this case \(\mathcal{B} = \mathcal{A} \cup \{4\},\) so $k=4$ and
$i=3$, so we use~\eqref{eq:cBiA-formula-part2}. We conclude
that 
\[
p_3 \cdot p_{v_{\mathcal{A}}} = 3t \cdot p_{v_{\mathcal{A}}} + 45
\cdot 
p_{v_{\mathcal{B}}}, 
\]
which may also be checked directly using the computations given in
Example~\ref{ex:Billey-sum} and~\eqref{eq:cBiA-formula}.

Now suppose \(i \not \in \mathcal{A}\) but \(i \in \mathcal{B},\)
i.e. \(i=4.\) In this case \(k=i=4\) and \(i \not \in \mathcal{A},\) so we
immediately see \(p_i(w_{\mathcal{A}}) = 0.\) We also
use~\eqref{eq:cBiA-formula-part1} to obtain the formula
\[
p_4 \cdot p_{v_{\mathcal{A}}} = 60 \cdot p_{v_{\mathcal{B}}},
\]
which again may be checked explicitly using the computations in
Example~\ref{ex:Billey-sum}. 

\end{example}

We conclude with some remarks about explicit presentations of
$H^*_{S^1}(Y)$ and $H^*(Y)$ via generators and relations. 
By Proposition~\ref{prop:pi-generate}, 
the equivariant Chevalley-Monk formula above
completely 
determines the ring structure of $H^*_{S^1}(Y)$. This leads to the following.

\begin{corollary}\label{corollary:ring-presentation-eqvt}
Let $Y$ be the Peterson variety of type $A_{n-1}$ with the
natural $S^1$-action defined by~\eqref{eq:def-circle}. For ${\mathcal A}
\subseteq \{1,2,\ldots, n-1\}$, let $v_{\mathcal{A}} \in S_n$ be the permutation
given in Definition~\ref{def:vA}, and let $p_{v_{\mathcal{A}}}$ be the
corresponding Peterson Schubert class in $H^*_{S^1}(Y)$. 
Let \(t \in H^*_{S^1}(Y)\) be the image of
the generator \(t \in H^*_{S^1}(\pt) \cong \C[t].\) Then the
$S^1$-equivariant cohomology $H^*_{S^1}(Y)$ is given by 
\[
H^*_{S^1}(Y) \cong \C[t, \{p_{v_{\mathcal{A}}}\}_{\mathcal{A}
  \subseteq \{1,2,\ldots, n-1\}}] \large/ \mathcal{J}
\]
where $\mathcal{J}$ is the ideal generated by the
relations~\eqref{eq:Monk-final}.  
\end{corollary}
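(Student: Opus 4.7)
The plan is to construct a natural $\C$-algebra surjection $\bar\phi: R/\mathcal{J} \to H^*_{S^1}(Y)$, where $R := \C[t, \{p_{v_{\mathcal{A}}}\}]$, and then establish injectivity by showing $R/\mathcal{J}$ is no larger than needed as a $\C[t]$-module. First I would define $\phi: R \to H^*_{S^1}(Y)$ as the unique $\C$-algebra homomorphism sending each formal variable to the corresponding cohomology class. Since the generators of $\mathcal{J}$ are precisely the Monk relations~\eqref{eq:Monk-final}, which hold in $H^*_{S^1}(Y)$ by Theorem~\ref{theorem:Monk}, $\phi$ descends to $\bar\phi: R/\mathcal{J} \to H^*_{S^1}(Y)$. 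Surjectivity of $\bar\phi$ is immediate from Proposition~\ref{prop:pi-generate}, since $t$ and the degree-$2$ classes $\{p_i\}$ that generate the target are all in the image.

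The key to injectivity is the claim that $R/\mathcal{J}$ is already spanned as a $\C[t]$-module by (the images of) the finite set $\{p_{v_{\mathcal{A}}}\}_{\mathcal{A} \subseteq \{1,\ldots,n-1\}}$. Granting this, the map $\bar\phi$ sends this $\C[t]$-spanning set to a genuine $\C[t]$-basis of $H^*_{S^1}(Y)$ by Theorem~\ref{theorem:pvA-basis}; hence any $\C[t]$-linear dependence in $R/\mathcal{J}$ would produce one in $H^*_{S^1}(Y)$, and $\bar\phi$ must be injective.

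To prove the spanning claim, I would extract a simplified form of the Monk relation. Fix $\mathcal{A} \subseteq \{1,\ldots,n-1\}$ and $k \notin \mathcal{A}$, and specialize $i=k$ in~\eqref{eq:Monk-final}. Then $p_k(w_{\mathcal{A}}) = 0$ by~\eqref{eq:piwA}, and Lemma~\ref{lemma:vanishing-for-pi} forces $c^{\mathcal{B}}_{k,\mathcal{A}} = 0$ for every $\mathcal{B} = \mathcal{A} \cup \{k'\}$ with $k' \neq k$, since then $k \notin \mathcal{B}$. Thus in $R/\mathcal{J}$ we have
\[
p_k \cdot p_{v_{\mathcal{A}}} = c^{\mathcal{A} \cup \{k\}}_{k,\mathcal{A}} \, p_{v_{\mathcal{A} \cup \{k\}}},
\]
with nonzero positive integer coefficient by Theorem~\ref{theorem:Monk}. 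Inducting on $|\mathcal{A}|$, every $p_{v_{\mathcal{A}}}$ equals in $R/\mathcal{J}$ a nonzero scalar multiple of the ordered product $\prod_{j \in \mathcal{A}} p_j$. Consequently, every element of $R/\mathcal{J}$ is represented by a polynomial in $t$ and the $\{p_i\}$. Iterated application of the full Monk formula~\eqref{eq:Monk-final}---starting from $p_{v_{\emptyset}} = 1$ and multiplying by one $p_i$ at a time, noting that the $\C[t]$-span of $\{p_{v_{\mathcal{A}}}\}$ is visibly closed under multiplication by each $p_i$ and by $t$---then shows that any such polynomial lies in the $\C[t]$-span of $\{p_{v_{\mathcal{A}}}\}$. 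This completes the spanning step, which is the principal technical obstacle; once the simplified Monk relation above is in hand, the double induction on $|\mathcal{A}|$ and on the number of $p_i$-factors makes the argument routine.
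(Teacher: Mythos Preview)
The paper offers no proof of this corollary beyond the one-sentence remark preceding it, that Proposition~\ref{prop:pi-generate} and Theorem~\ref{theorem:Monk} together ``completely determine the ring structure.'' Your argument supplies the substance behind that remark: you build the surjection $\bar\phi$, then bound $R/\mathcal{J}$ from above by showing it is $\C[t]$-spanned by the $p_{v_{\mathcal{A}}}$, whence injectivity follows from Theorem~\ref{theorem:pvA-basis}. The extraction of the simplified relation $p_k\cdot p_{v_{\mathcal{A}}}=c\,p_{v_{\mathcal{A}\cup\{k\}}}$ for $k\notin\mathcal{A}$ (by specializing $i=k$ in~\eqref{eq:Monk-final}) is exactly the right device, and the closure of the $\C[t]$-span under multiplication by each $p_i$ is the correct engine for the spanning step.

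There is, however, one point both you and the paper gloss over. You write ``starting from $p_{v_\emptyset}=1$,'' but the relations~\eqref{eq:Monk-final} do \emph{not} force $p_{v_\emptyset}=1$ in $R/\mathcal{J}$. Grade $R$ by $\deg t=1$ and $\deg p_{v_{\mathcal{A}}}=|\mathcal{A}|$; each Monk relation is then homogeneous of degree $|\mathcal{A}|+1\geq 1$, so $\mathcal{J}$ contains no nonzero element of degree~$0$, and in particular $p_{v_\emptyset}-1\notin\mathcal{J}$. Concretely, for $n=2$ one has $R/\mathcal{J}=\C[t,p_{v_\emptyset},p_1]/\bigl(p_1(p_{v_\emptyset}-1),\,p_1(p_1-t)\bigr)$, in which $p_{v_\emptyset},p_{v_\emptyset}^2,p_{v_\emptyset}^3,\ldots$ are $\C[t]$-linearly independent, so $R/\mathcal{J}$ is not finitely generated over $\C[t]$ and $\bar\phi$ is not injective. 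Thus the corollary as literally stated needs either the generator $p_{v_\emptyset}$ omitted or the relation $p_{v_\emptyset}=1$ adjoined to $\mathcal{J}$; with either fix your argument is correct as written. You should flag this explicitly rather than invoke $p_{v_\emptyset}=1$ without comment.
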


We next explain how the equivariant Chevalley-Monk formula of
Theorem~\ref{theorem:Monk} yields a Chevalley-Monk
formula for the \emph{ordinary} cohomology of Peterson varieties, as
well as an explicit ring presentation of $H^*(Y)$. For
this discussion we denote by $\check{\sigma}_w \in H^*(G/B)$ and
$\check{p}_w \in H^*(Y)$ the ordinary cohomology classes which are the
images of the (equivariant) Schubert and Peterson Schubert classes
under the forgetful maps $H^*_T(G/B) \to H^*(G/B)$ and $H^*_{S^1}(Y)
\to H^*(Y)$, respectively. 
We have the following. 

\begin{lemma}
The classes $\{\check{p}_{v_{\mathcal{A}}}\}_{\mathcal{A} \subseteq
  \{1,2,\ldots, n-1\}}$ form a $\C$-basis of $H^*(Y)$ and the
cohomology-degree-$2$ classes $\{\check{p}_i\}_{i=1}^{n-1}$ are a set of
ring generators of $H^*(Y)$. 
\end{lemma}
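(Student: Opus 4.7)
The plan is to derive both assertions as straightforward consequences of the equivariant results already established together with the fact that $H^*_{S^1}(Y)$ is a free module over $H^*_{S^1}(\pt)$.

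First, I would invoke Theorem~\ref{theorem:injectivity}, which gives the isomorphism $H^*_{S^1}(Y) \cong H^*_{S^1}(\pt) \otimes_{\C} H^*(Y)$ of $H^*_{S^1}(\pt)$-modules. In particular, $H^*_{S^1}(Y)$ is a free $\C[t]$-module, and the forgetful map $H^*_{S^1}(Y) \to H^*(Y)$ may be identified with reduction modulo the ideal generated by $t \in H^*_{S^1}(\pt)$, i.e.\ with the quotient map
\[
H^*_{S^1}(Y) \;\longrightarrow\; H^*_{S^1}(Y) \otimes_{\C[t]} \C \;\cong\; H^*(Y),
\]
where $\C$ is viewed as the $\C[t]$-module $\C[t]/(t)$. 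Since Theorem~\ref{theorem:pvA-basis} establishes that $\{p_{v_{\mathcal{A}}}\}_{\mathcal{A} \subseteq \{1,\ldots,n-1\}}$ is an $H^*_{S^1}(\pt)$-module basis of $H^*_{S^1}(Y)$, applying $\otimes_{\C[t]}\C$ shows that the images $\{\check{p}_{v_{\mathcal{A}}}\}$ form a $\C$-basis of $H^*(Y)$.

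For the generating statement, I would appeal to Proposition~\ref{prop:pi-generate}, which asserts that the degree-$2$ classes $\{p_i\}_{i=1}^{n-1}$ together with $t$ generate $H^*_{S^1}(Y)$ as a $\C$-algebra. Applying the surjective ring homomorphism $H^*_{S^1}(Y) \to H^*(Y)$ above annihilates $t$, so the images $\{\check{p}_i\}_{i=1}^{n-1}$ generate $H^*(Y)$ as a ring. There is essentially no obstacle here: the only subtlety is identifying the forgetful map $H^*_{S^1}(Y) \to H^*(Y)$ with reduction mod $t$, but this is standard for equivariantly formal spaces, and Theorem~\ref{theorem:injectivity} has already placed $Y$ in this setting via the paving by affines of \cite{Tym06}.
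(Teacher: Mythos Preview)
Your proof is correct and takes a slightly different route from the paper's. The paper invokes the external Carrell--Kaveh result \cite{CarKav08} that the ordinary restriction $H^*(G/B)\to H^*(Y)$ is surjective in order to conclude that $H^*(Y)$ is generated in degree $2$; it then uses the commutative square comparing $H^*_T(G/B)$, $H^*(G/B)$, $H^*_{S^1}(Y)$, and $H^*(Y)$ to deduce that the forgetful map $H^*_{S^1}(Y)\to H^*(Y)$ is surjective, whence the images $\check{p}_{v_{\mathcal A}}$ span and (by a dimension count) form a basis. By contrast, you stay entirely within the equivariant picture already built in the paper: equivariant formality from Theorem~\ref{theorem:injectivity} identifies the forgetful map with reduction modulo $t$, so the free $\C[t]$-basis $\{p_{v_{\mathcal A}}\}$ of Theorem~\ref{theorem:pvA-basis} passes directly to a $\C$-basis, and Proposition~\ref{prop:pi-generate} passes to the ring-generation statement once $t$ is killed. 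Your argument is more self-contained (it avoids the Carrell--Kaveh citation), while the paper's route has the small side benefit of recording surjectivity of $H^*(G/B)\to H^*(Y)$ explicitly.
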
 

\begin{proof} 
It is well-known that the Schubert classes $\{\check{\sigma}_w\}_{w
  \in S_n}$ form a $\C$-basis of $H^*(G/B)$ and that the
cohomology-degree-$2$ classes among the $\check{\sigma}_w$ generate the
ring $H^*(G/B)$. Carrell and Kaveh show
that the restriction map
$H^*(G/B) \to H^*(Y)$ is surjective \cite{CarKav08}, so $H^*(Y)$ is generated in degree $2$. Also, we have shown
in previous sections that $H^*_T(G/B) \to H^*_{S^1}(Y)$ is surjective, and 
that the Peterson Schubert classes form a $H^*_{S^1}(\pt)$-module basis for $H^*_{S^1}(Y)$. 
The compositions \(H^*_T(G/B) \to H^*(G/B) \to H^*(Y)\) and \(H^*_T(G/B) \to H^*_{S^1}(Y) \to H^*(Y)\) are equal, so we conclude that $H^*_{S^1}(Y) \to H^*(Y)$ is also surjective and hence 
the $\{\check{p}_{v_{\mathcal{A}}}\}$ are a $\C$-basis for
$H^*(Y)$. 
\end{proof} 

In contrast, the element \(t \in H^*_{S^1}(Y)\) given by the
image of the cohomology-degree-$2$ generator of $\C[t] \cong
H^*_{S^1}(\pt)$ lies in the kernel of the forgetful map $H^*_{S^1}(Y)
\to H^*(Y)$. This can be seen from the fact that $Y$ is the fiber of the bundle $Y \to Y
\times_{S^1} ES^1 \to BS^1$. 

From this discussion we immediately
obtain the following consequence of Theorem~\ref{theorem:Monk}. 

\begin{corollary}\label{corollary:ordinary-Monk}
 ({\bf ``The (ordinary) Chevalley-Monk formula for Peterson varieties.''}) 
Let $Y$ be the Peterson variety of type $A_{n-1}$. 
For ${\mathcal A}
\subseteq \{1,2,\ldots, n-1\}$, let $v_{\mathcal{A}} \in S_n$ be the permutation
given in Definition~\ref{def:vA}, and let $\check{p}_{v_{\mathcal{A}}}$
be the image under the forgetful map $H^*_{S^1}(Y) \to H^*(Y)$ of the 
corresponding Peterson Schubert class $p_{v_{\mathcal{A}}}$ in $H^*_{S^1}(Y)$. Let $\check{p}_i :=
\check{p}_{s_i}$ denote the class corresponding to the 
singleton subset $\{i\}$. Then 
\begin{equation}\label{eq:ordinary-Monk}
\check{p}_i \cdot \check{p}_{v_{\mathcal{A}}} = 
\sum_{{\mathcal{A}} \subsetneq {\mathcal{B}} \textup{ and } |{\mathcal{B}}|
  = |{\mathcal{A}}|+1} \check{c}^{\mathcal{B}}_{i,{\mathcal{A}}} \cdot \check{p}_{v_{\mathcal{B}}},
\end{equation}
where, for a subset $\mathcal{B} \subseteq \{1,2,\ldots,n-1\}$ which is a disjoint union
\(\mathcal{B} =
  \mathcal{A} \cup \{k\},\) the structure constant $\check{c}^{\mathcal{B}}_{i, \mathcal{A}}$ is equal to the structure constant given in Theorem~\ref{theorem:Monk}, i.e. 
\[
\check{c}^{\mathcal{B}}_{i, \mathcal{A}} = c^{\mathcal{B}}_{i, \mathcal{A}} \in \Z_{\geq 0}.
\]
In particular, each $\check{c}^{\mathcal{B}}_{i, \mathcal{A}}$ is a non-negative integer. 
\end{corollary}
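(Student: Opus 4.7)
The plan is to apply the forgetful ring homomorphism $\phi \colon H^*_{S^1}(Y) \to H^*(Y)$ to the equivariant Chevalley--Monk relation~\eqref{eq:Monk-final} established in Theorem~\ref{theorem:Monk}. Since $\phi$ is a ring map sending each $p_{v_{\mathcal{A}}}$ to $\check{p}_{v_{\mathcal{A}}}$ by definition, we immediately obtain
\[
\check{p}_i \cdot \check{p}_{v_{\mathcal{A}}} = \phi(p_i(w_{\mathcal{A}})) \cdot \check{p}_{v_{\mathcal{A}}} + \sum_{\mathcal{A} \subsetneq \mathcal{B},\, |\mathcal{B}|=|\mathcal{A}|+1} \phi(c^{\mathcal{B}}_{i,\mathcal{A}}) \cdot \check{p}_{v_{\mathcal{B}}}.
\]
The task then reduces to computing the images under $\phi$ of the two kinds of coefficients appearing on the right.

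First, I would observe that by equation~\eqref{eq:piwA}, the ``self-coefficient'' $p_i(w_{\mathcal{A}})$ is either zero (when $i \notin \mathcal{A}$) or a positive scalar multiple of the generator $t \in H^*_{S^1}(\pt)$ (when $i \in \mathcal{A}$). The element $t \in H^*_{S^1}(Y)$ is the pullback of the degree-$2$ generator of $H^*_{S^1}(\pt) \cong \C[t]$ along the projection $Y \to \pt$, so the diagram
\[
\xymatrix{
H^*_{S^1}(\pt) \ar[r] \ar[d] & H^*_{S^1}(Y) \ar[d]^{\phi} \\
H^*(\pt) \ar[r] & H^*(Y)
}
\]
commutes. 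The left vertical map annihilates $t$ since $H^*(\pt) \cong \C$ is concentrated in degree $0$, so $\phi(t)=0$ and therefore $\phi(p_i(w_{\mathcal{A}})) \cdot \check{p}_{v_{\mathcal{A}}} = 0$.

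Next, I would verify that $\phi(c^{\mathcal{B}}_{i,\mathcal{A}}) = c^{\mathcal{B}}_{i,\mathcal{A}}$ for $|\mathcal{B}| = |\mathcal{A}|+1$. By inspection of formulas~\eqref{eq:cBiA-formula-part1} and~\eqref{eq:cBiA-formula-part2}, each such $c^{\mathcal{B}}_{i,\mathcal{A}}$ is a non-negative \emph{integer} constant, with no $t$-dependence. (This can also be seen a priori from a cohomological-degree count: the left-hand side of~\eqref{eq:Monk-final} has cohomology degree $2|\mathcal{A}|+2$, and $p_{v_{\mathcal{B}}}$ already has degree $2|\mathcal{B}| = 2|\mathcal{A}|+2$, forcing $c^{\mathcal{B}}_{i,\mathcal{A}}$ to be a scalar in $H^0_{S^1}(\pt) \cong \C$, which Theorem~\ref{theorem:Monk} then identifies as a non-negative integer.) Since $\phi$ acts as the identity on scalars, these constants survive unchanged in $H^*(Y)$, yielding~\eqref{eq:ordinary-Monk} with $\check{c}^{\mathcal{B}}_{i,\mathcal{A}} = c^{\mathcal{B}}_{i,\mathcal{A}} \in \Z_{\geq 0}$.

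There is no real obstacle here: the corollary is a direct consequence of Theorem~\ref{theorem:Monk} once one recognizes that the only ``purely equivariant'' piece of the equivariant Monk formula is the self-coefficient $p_i(w_{\mathcal{A}}) \cdot p_{v_{\mathcal{A}}}$, which is annihilated by the forgetful map, while every other structure constant is already an integer. The positivity and integrality of the ordinary structure constants therefore follow verbatim from the equivariant statement.
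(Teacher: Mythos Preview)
Your argument is correct and follows essentially the same approach as the paper: apply the forgetful map to the equivariant Monk formula of Theorem~\ref{theorem:Monk}, observe that the self-coefficient $p_i(w_{\mathcal{A}})$ is a multiple of $t$ and hence dies, and note that the remaining coefficients $c^{\mathcal{B}}_{i,\mathcal{A}}$ are already non-negative integers and therefore survive unchanged. The paper's proof is a one-line remark to this effect; your version simply spells out the details more carefully.
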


\begin{proof} 
The statement is immediate from Theorem~\ref{theorem:Monk} and the
observation that $p_i(w_{\mathcal{A}})$, being a multiple of $t$, goes
to zero under the forgetful map $H^*_{S^1}(Y) \to H^*(Y)$. 
\end{proof}

Since the cohomology-degree-$2$ elements $\{\check{p}_i\}_{i=1}^{n-1}$
generate the ring, 
Corollary~\ref{corollary:ordinary-Monk} completely determines the ring
structure of $H^*(Y)$. In particular, in analogy to
Corollary~\ref{corollary:ring-presentation-eqvt}, we obtain the
following.

\begin{corollary}\label{corollary:ring-presentation-ordinary} 
  Let $Y$ be the Peterson variety of type $A_{n-1}$.
For ${\mathcal A}
\subseteq \{1,2,\ldots, n-1\}$, let $v_{\mathcal{A}} \in S_n$ be the permutation
given in Definition~\ref{def:vA}, and let $\check{p}_{v_{\mathcal{A}}}$
be the image under $H^*_{S^1}(Y) \to H^*(Y)$ of the 
corresponding Peterson Schubert class in $H^*_{S^1}(Y)$. 
Then the 
ordinary cohomology $H^*(Y)$ is given by 
\[
H^*_{S^1}(Y) \cong \C[\{\check{p}_{v_{\mathcal{A}}}\}_{\mathcal{A}
  \subseteq \{1,2,\ldots, n-1\}}] \large/ \check{\mathcal{J}}
\]
where $\check{\mathcal{J}}$ is the ideal generated by the
relations~\eqref{eq:ordinary-Monk}.  
\end{corollary}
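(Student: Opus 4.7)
The plan is to deduce the ring presentation of $H^*(Y)$ by reducing the $S^1$-equivariant presentation of Corollary~\ref{corollary:ring-presentation-eqvt} modulo the equivariant parameter $t$. The essential step is to establish a ring isomorphism $H^*(Y) \cong H^*_{S^1}(Y)/(t)$, where $(t) \subset H^*_{S^1}(Y)$ denotes the principal ideal generated by the image of $t \in \C[t] \cong H^*_{S^1}(\pt)$; once this is in hand, the corollary will follow by direct inspection of what happens to the Monk relations when $t$ is set to zero.

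To establish $H^*(Y) \cong H^*_{S^1}(Y)/(t)$, recall that the forgetful map $\pi \colon H^*_{S^1}(Y) \to H^*(Y)$ is a surjective ring homomorphism (as noted in the lemma preceding Corollary~\ref{corollary:ordinary-Monk}) and that $\pi(t) = 0$, so $(t) \subseteq \ker \pi$. For the reverse containment I would use the freeness afforded by Theorem~\ref{theorem:injectivity}: any $\alpha \in H^*_{S^1}(Y)$ has a unique expansion $\alpha = \sum_{\mathcal{A}} f_{\mathcal{A}}(t) \cdot p_{v_{\mathcal{A}}}$ with $f_{\mathcal{A}} \in \C[t]$. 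If $\pi(\alpha) = 0$, then $\sum_{\mathcal{A}} f_{\mathcal{A}}(0) \check{p}_{v_{\mathcal{A}}} = 0$, and $\C$-linear independence of $\{\check{p}_{v_{\mathcal{A}}}\}$ (established in the lemma preceding Corollary~\ref{corollary:ordinary-Monk}) forces $f_{\mathcal{A}}(0) = 0$ for every $\mathcal{A}$. Writing each $f_{\mathcal{A}}(t) = t \cdot g_{\mathcal{A}}(t)$ then exhibits $\alpha$ as a multiple of $t$, giving $\ker \pi = (t)$.

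Applying Corollary~\ref{corollary:ring-presentation-eqvt} and passing to the quotient by $(t)$ yields
\[ H^*(Y) \;\cong\; H^*_{S^1}(Y)/(t) \;\cong\; \C[t, \{p_{v_{\mathcal{A}}}\}]/(\mathcal{J} + (t)) \;\cong\; \C[\{p_{v_{\mathcal{A}}}\}]/\bar{\mathcal{J}}, \]
where $\bar{\mathcal{J}}$ is the image of $\mathcal{J}$ under the projection $\C[t, \{p_{v_{\mathcal{A}}}\}] \twoheadrightarrow \C[\{p_{v_{\mathcal{A}}}\}]$ sending $t \mapsto 0$. I would then inspect the generators~\eqref{eq:Monk-final} of $\mathcal{J}$: the ``diagonal'' term $p_i(w_{\mathcal{A}}) \cdot p_{v_{\mathcal{A}}}$ is divisible by $t$ by~\eqref{eq:piwA}, so it vanishes modulo $t$, and what remains is exactly the ordinary Monk relation~\eqref{eq:ordinary-Monk}. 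Hence $\bar{\mathcal{J}} = \check{\mathcal{J}}$ and the corollary follows. There is no genuine obstacle here — the only point requiring care is the containment $\ker \pi \subseteq (t)$, and this reduces to a one-line computation once we use the $\C[t]$-module basis of $H^*_{S^1}(Y)$ supplied by Theorem~\ref{theorem:pvA-basis}.
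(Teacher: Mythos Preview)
Your argument is correct. The paper itself does not supply a proof of this corollary: it simply asserts that, since the degree-$2$ classes $\{\check{p}_i\}$ generate $H^*(Y)$ and the ordinary Monk formula of Corollary~\ref{corollary:ordinary-Monk} computes every product $\check{p}_i \cdot \check{p}_{v_{\mathcal{A}}}$, the presentation follows ``in analogy to Corollary~\ref{corollary:ring-presentation-eqvt}.''

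Your route is genuinely different in that you \emph{deduce} the ordinary presentation from the equivariant one rather than arguing in parallel. You first identify $\ker\bigl(H^*_{S^1}(Y)\to H^*(Y)\bigr)$ with the ideal $(t)$, using the $\C[t]$-freeness of $H^*_{S^1}(Y)$ and the fact that the $\check{p}_{v_{\mathcal{A}}}$ are a $\C$-basis of $H^*(Y)$; then you specialize the presentation of Corollary~\ref{corollary:ring-presentation-eqvt} at $t=0$ and check that the Monk relations~\eqref{eq:Monk-final} reduce to~\eqref{eq:ordinary-Monk}. This buys you a cleaner logical dependency (the ordinary statement is literally a specialization of the equivariant one) and makes explicit the standard fact $H^*(Y)\cong H^*_{S^1}(Y)/(t)$, which the paper uses only implicitly. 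The paper's parallel approach has the minor advantage of not invoking Corollary~\ref{corollary:ring-presentation-eqvt} at all, but since that corollary is itself asserted without proof, your argument is in fact the more complete of the two.
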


\appendix
\section{Module bases for Borel-equivariant cohomology with field
  coefficients}

In this appendix, we state a fact (with proof) about bases for modules
over graded rings, which in particular applies to our setting of
Borel-equivariant cohomology with field coefficients. The statement is
well-known, perhaps obvious, to the experts.  However, we were unable
to find a clear reference in the literature, and include it here for
completeness, convenience, and future use.

It is known \cite{BottTu, GS} that if $H^*(X;\F)$ for a field $\F$ is
finite-dimensional and concentrated in even degree, then 
the Borel-equivariant cohomology of
$H^*_T(X;\F)$ is a free $H^*_T(\pt;\F)$-module, with a non-canonical module isomorphism 
 to the tensor product 
\[
H^*_T(X;\F) \cong H^*_T(\pt;\F) \otimes_\F H^*(X;\F).
\]
Suppose $X$ is a $T$-space such that the above holds. 
In such a situation, it is natural
to ask for a $H^*_T(\pt;\F)$-module basis for the equivariant
cohomology 
$H^*_T(X;\F)$, such that the basis elements correspond in some way to
elements of the ordinary cohomology $H^*(X;\F)$. 

Motivated by this question, we prove below a general theorem about
graded rings and modules over graded rings. One
consequence is that in many common situations in the
toric topology of algebraic varieties, the Betti
numbers of the ordinary cohomology $H^*(X;\F)$ of a $T$-space
determine the number of elements of a given degree in 
a module basis for $H^*_T(X;\F)$. 

Let $R$ be a graded ring and $M$ an $R$-module. Suppose $M$ is graded
compatibly with the $R$-module structure in the sense that \(M \cong \bigoplus_{k \geq
  0} M_k\) as additive groups and the $R$-module structure takes $R_i
\times M_k$ to $M_{i+k}$. We assume $R_0 \cong \F$. Hence, since $M$
is an $R_0$-module, it also has the structure of an $\F$-vector space,
with each $M_k$ an $\F$-subspace. Let $M_{\leq k} = \bigoplus_{j \leq
  k} M_j$ denote the subspace of $M$ consisting of graded pieces of
degree at most $k$.

\begin{proposition} \label{prop:general-module-gens}
    Let $\F$ be a field. Let $R = \bigoplus_{i \geq 0} R_i$ be a
    graded $\F$-algebra such that $R_k$ is finite-dimensional for
    all $k \geq 0$, and $R_0 \cong \F$. Let $M$ be a free finitely-generated 
    $R$-module of the form 
\[
M = R \otimes_\F V
\]
for a finite-dimensional graded $\F$-vector space
$V$, where the $R$-module structure on the right hand side is given by ordinary
multiplication on the first factor and the grading on $M$ is
given by 
\[
M_k = \bigoplus_{i+j=k} R_i \otimes_\F V_j.
\]
Suppose $\{m_{\mu,k}\}$ is a subset of $M$
satisfying 
\begin{itemize}
\item \(\deg(m_{\mu,k}) = k,\) 
\item the number of $m_{\mu,k}$ of degree $k$ is precisely
  $\dim_\F(V_k)$, and
\item the $\{m_{\mu,k}\}$ are $R$-linearly independent in $M$. 
\end{itemize}
Then the $\{m_{\mu,k}\}$ are an $R$-module basis of $M$. 
\end{proposition}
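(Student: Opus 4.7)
The plan is to exploit the graded structure and compare graded $\F$-vector space dimensions in each degree.

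First, I would let $N$ denote the $R$-submodule of $M$ generated by the set $\{m_{\mu,k}\}$, and observe that $N$ inherits a grading from $M$, since each generator $m_{\mu,k}$ is homogeneous of degree $k$. Because the $\{m_{\mu,k}\}$ are $R$-linearly independent by hypothesis, $N$ is in fact a free graded $R$-module with basis $\{m_{\mu,k}\}$. Consequently, the graded piece $N_n$ decomposes as
\[
N_n \cong \bigoplus_{\mu, k} R_{n-k},
\]
where the sum is over all generators $m_{\mu, k}$ of degree $k \leq n$. Counting $\F$-dimensions and grouping by $k$, and using the hypothesis that the number of degree-$k$ generators is exactly $\dim_\F V_k$, I obtain
\[
\dim_\F N_n = \sum_{k = 0}^{n} \dim_\F V_k \cdot \dim_\F R_{n-k}.
\]

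Next, I would compute $\dim_\F M_n$ directly from the definition $M = R \otimes_\F V$ and the grading on $M$. Since $M_n = \bigoplus_{i+j=n} R_i \otimes_\F V_j$, one immediately has
\[
\dim_\F M_n = \sum_{k=0}^{n} \dim_\F V_k \cdot \dim_\F R_{n-k},
\]
matching the dimension of $N_n$. Finite-dimensionality in each degree is guaranteed by the hypotheses that $V$ is finite-dimensional and each $R_k$ is finite-dimensional, so these sums are well-defined finite numbers.

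Finally, since $N \subseteq M$ is a graded $\F$-subspace with $\dim_\F N_n = \dim_\F M_n < \infty$ for every $n$, I conclude $N_n = M_n$ for all $n$, hence $N = M$. This shows the $\{m_{\mu,k}\}$ generate $M$ as an $R$-module, and together with the hypothesized $R$-linear independence this proves they form an $R$-module basis. There is no serious obstacle here: the essential content is the dimension count, and the graded freeness of the submodule generated by an $R$-linearly independent set of homogeneous elements is automatic.
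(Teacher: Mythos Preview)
Your proof is correct and rests on the same underlying dimension count as the paper's, but your execution is cleaner. The paper argues by induction on the degree $k$, showing at each step that $N_{\leq k} = M_{\leq k}$: it decomposes $M_{k+1} = (R_0 \otimes V_{k+1}) \oplus \bigoplus_{i>0} R_i \otimes V_{k+1-i}$, uses the inductive hypothesis to see the second summand already lies in $N$, and then invokes the $\F$-linear independence of the $m_{\mu,k+1}$ to fill in the remaining $\dim_\F V_{k+1}$ dimensions. You bypass the induction entirely by observing at the outset that $R$-linear independence makes $N$ a free graded $R$-module on the $m_{\mu,k}$, so its Hilbert function is immediately $\sum_k (\dim_\F V_k)(\dim_\F R_{n-k})$, which matches that of $M = R \otimes_\F V$. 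Your route is shorter and more transparent; the paper's inductive version has the minor expository virtue of making visible how an arbitrary element of $M$ is expressed in the basis, but that is not needed for the bare statement.
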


 \begin{proof}
Since the $\{m_{\mu,k}\}$ are assumed $R$-linearly independent, it
suffices to show that they $R$-span $M$. 
Let $N$ denote the $R$-submodule of $M$ generated by the
$\{m_{\mu,k}\}$. We will show that $N=M$ by proving inductively
that for each $k \geq 0$ we have 
\begin{itemize}
\item $N_{\leq k} = M_{\leq k}$ and moreover, 
\item $M_{\leq k}$ is $R$-generated by the subset $\{m_{\mu, j}: j \leq k\}$ of elements $m_{\mu,j}$ of degree less than or equal to
  $k$. 
\end{itemize}
We begin with the base case $k=0$. In this case 
\[
M_0 = R_0 \otimes_\F V_0.
\]
By assumption $R_0$ is a one-dimensional $\F$-vector space so
\(\dim_\F(M_0) = \dim_\F(V_0).\) By hypothesis there exist $\dim_\F(V_0)$
many elements $m_{\mu,0}$. These elements are assumed $R$-linearly
independent, so in particular they are $\F$-linearly
independent. Hence they $\F$-span an $\F$-subspace of $M_0$ of dimension
$\dim_\F(M_0)$, so they are a basis; we conclude $N_0 =
M_0$. We also see that $M_0$ is
$R$-generated by the $\{m_{\mu,0}\}$, as required. 

Now suppose by induction that $N_{\leq k} = M_{\leq k}$ and that
$M_{\leq k}$ is $R$-generated by the elements $\{m_{\mu,j}\}$ with
$j \leq k$. We wish to show that $N_{\leq k+1} = M_{\leq k+1}$ for
which it would suffice to show $N_{k+1} = M_{k+1}$. By definition
$N_{k+1} \subseteq M_{k+1}$, so it suffices to show \(\dim_{\F}
N_{k+1} \geq \dim_{\F} M_{k+1}.\) We first observe
that $M_{k+1}$ may be decomposed as 
\begin{equation}\label{eq:decomp}
M_{k+1} = (R_0 \otimes V_{k+1}) \bigoplus \left( \bigoplus_{\stackrel{i+j=k+1}{i>0}}
 R_i \otimes V_j \right).
\end{equation}
We first claim that any element in the second factor of this direct
sum decomposition is an $R$-linear combination of elements $m_{\mu,
  j}$ for $j \leq k$. 
Indeed, any element in $R_i
\otimes V_j$ with $i>0$ can be written as an $R$-multiple of an
element $1 \otimes V_j \in M_j$ for $j\leq k$.  By the inductive
hypothesis $M_j = N_j$ for $j \leq k$, and by definition the
$\{m_{\mu,j}\}$ for $j \leq k$ are an $R$-basis for $N_{\leq k}$. 
Multiplying an $R$-linear combination of $\{m_{\mu,j}\}$ for
$j \leq k$ by an element of $R$ is still an $R$-linear combination of
$\{m_{\mu,j}\}$ for $j \leq k$; in particular the result is still in $N$. 

We now claim the $\F$-span of the degree-$(k+1)$ elements
$\{m_{\mu,k+1}\}$ and the second factor in~\eqref{eq:decomp} is all of
$M_{k+1}$.  Note that 
\[
\text{span}_{\F} \langle m_{\mu,k+1} \rangle \cap \left(
  \bigoplus_{\stackrel{i+j=k+1}{i>0}} R_i \otimes V_j \right) = \{0\}
\]
since the $\{m_{\mu, j}\}_{j \leq k+1}$ are $R$-linearly independent
and in particular $\F$-linearly independent. Since 
$\abs{\{m_{\mu,k+1}\}} = \dim_{\F}(V_{k+1})$ and \(\text{span}_{\F}
\langle m_{\mu, k+1} \rangle \subseteq N_{k+1},\) we conclude 
\(\dim_{\F} N_{k+1} \geq \dim_{\F} M_{k+1},\) as desired.

 \end{proof}

 \begin{remark}
   We emphasize that it is crucial in this proof, as well as in the applications to
   $T$-spaces mentioned above, that we are working with vector spaces
   over a field $\F$. 
  In particular, the analogous conclusion does
   \emph{not} hold for arbitrary generalized equivariant cohomology
   theories. For instance, for 
 Borel-equivariant cohomology with
   $\Z$ coefficients, Darius Bayegan has shown via explicit
   calculation that
    the Peterson Schubert classes
   $\{p_{v_{\mathcal{A}}}\}$ in this manuscript
   are
   \emph{not} an $H^*_{S^1}(\pt;\Z)$-module basis of
   $H^*_{S^1}(Y;\Z)$, although they are a $H^*_{S^1}(\pt;\C)$-module
   basis of $H^*_{S^1}(Y;\C)$ by Theorem~\ref{theorem:pvA-basis}. 
\end{remark}

\def\cprime{$'$}

\end{document}